\documentclass[a4paper]{amsart}
%
\usepackage{amsmath}
\usepackage{amsfonts}
\usepackage{amssymb}
\usepackage{graphicx}
\usepackage{amsthm}
\usepackage{enumitem}
\usepackage{amsrefs}
\usepackage{hyperref}

%
\theoremstyle{plain}
\newtheorem{theorem}{Theorem}[section]
\newtheorem{lemma}[theorem]{Lemma}
\newtheorem{prop}[theorem]{Proposition}
\newtheorem{cor}[theorem]{Corollary}
\theoremstyle{definition}
\newtheorem{definition}[theorem]{Definition}
\newtheorem{conv}[theorem]{Convention and notation}
\newtheorem{notation}[theorem]{Notation}
\newtheorem{remark}[theorem]{Remark}


\newcommand{\R}{\mathbb{R}}
\newcommand{\C}{\mathbb{C}}
\newcommand{\F}{\mathbb{F}}

\newcommand{\Kw}{\mathbb{H}}
\newcommand{\G}{\mathbb{G}_r(\mathbb{F}^n)}
\newcommand{\Gg}{\mathbb{G}(\mathbb{F}^n)}

\newcommand{\sS}{\ensuremath{\mathcal{S}}}
\newcommand{\sF}{\ensuremath{\mathcal{F}}}
\newcommand{\sZ}{\ensuremath{\mathcal{Z}}}

\newcommand{\reg}{\ensuremath{\mathcal{R}}}
\newcommand{\GL}{\operatorname{GL}}
\newcommand{\Mat}{\operatorname{Mat}}
\newcommand{\Hom}{\operatorname{Hom}}
\newcommand{\im}{\operatorname{im}}
\newcommand{\func}[3]{#1\colon#2\rightarrow #3}

\newcommand{\tbundle}{\varepsilon_X^n(\F)}
\newcommand{\sC}{\mathcal{C}}
\newcommand{\smooth}{\mathcal{C}^{\infty}}
\newcommand{\VB}{\mathrm{VB}}

\setlist[enumerate,1]{label={\upshape(\arabic*)}} 

\begin{document}
\title{Regulous vector bundles}
\author{Wojciech Kucharz}
\address{Institute of Mathematics, Faculty of Mathematics and Computer Science, Jagiellonian University, ul. {\L{}}ojasiewcza 6, 30-348, Krak{\'o}w, Poland} 
\email{Wojciech.Kucharz@im.uj.edu.pl}
\author{Maciej Zieli{\'n}ski}
\email{Maciej.Zielinski@im.uj.edu.pl}
\subjclass[2010]{14P05, 14P99, 14A10, 57R22} 
\keywords{Real algebraic variety, constructible set, regulous function, regulous variety, regulous vector bundle}%
\begin{abstract}
Among recently introduced new notions in real algebraic geometry is that of regulous functions. Such functions form a foundation for the development of regulous geometry. Several interesting results on regulous varieties and regulous sheaves are already available. In this paper, we define and investigate regulous vector bundles. We establish algebraic and geometric properties of such vector bundles, and identify them with stratified-algebraic vector bundles. Furthermore, using new results on curve-rational functions, we characterize regulous vector bundles among families of vector spaces parametrized by an affine regulous variety. We also study relationships between regulous and topological vector bundles. 
\end{abstract}

\maketitle

\section*{Introduction}
In \cite{KK}, the first named author and K.Kurdyka introduced and investigated a new class of vector bundles on real algebraic varieties, called \textit{stratified-algebraic vector bundles}. Such vector bundles occupy an intermediate position between algebraic vector bundles and topological vector bundles. They have many desirable features of algebraic vector bundles but are more flexible. The main focus of \cite{KK} and the sequel papers \cites{KUCH5, KUCH7, KK2} was on comparison of algebraic, stratified-algebraic, and topological vector bundles. Stratified-algebraic  vector bundles also fit into \textit{regulous geometry} developed by Fichou, Huisman, Mangolte and Monnier \cite{FR}. However, this connection was only briefly indicated in \cite{KK}. Our goal in this paper, which can be viewed as a prequel to \cites{KUCH5, KUCH7, KK, KK2}, is to fill in this gap. 

Along the way, we present a slightly different approach to regulous geometry, based on stratifications and the paper of Koll{\'a}r and Nowak \cite{KN}. Assuming \cite{KN}, we give in Sections 1, 2 and 3 a self-contained exposition of basic notions of regulous geometry. \textit{Regulous vector bundles} are defined and studied in Section 4. The main result is Theorem \ref{bundles_main}. We prove, in particular, that any regulous vector bundle on an affine regulous variety is isomorphic to a subbundle of some trivial vector bundle. This allows us to identify regulous vector bundles with stratified-algebraic vector bundles, cf. Remark \ref{stratified=regulous}. We consider $\F$-vector bundles, where $\F$ denotes $\R$, $\C$, or $\Kw$ (the quaternions). This is justified since $\F$-vector bundles have already proved to be very useful in real algebraic geometry, cf. \cites{BT, BKV, BBK, BCR, BK, BK2, KUCH3, KUCH4, KUCH5, KUCH7, KK, KK2, Zie}. In Section 5, we investigate which families of $\F$-vector spaces on an affine regulous (resp. affine real algebraic) variety are actually regulous (resp. algebraic) $\F$-vector bundles. Our results, Theorems \ref{on_curves} and \ref{on_surfaces}, depend on the recent paper \cite{KKK}. It is a challenging problem to find a characterization of these topological $\F$-vector bundles which admit a regulous structure. We address this problem in Section 6, where the main results are Theorems \ref{6m1} and \ref{real_bundles}.

Regulous geometry is a new subfield of real algebraic geometry. It is fair to say that to regulous geometry belong all the results in which regulous functions, stratified-regular functions or continuous rational functions (these classes of functions are all identical under certain assumptions) play an essential role. For a more complete picture of regulous geometry, the reader may consult \cites{BKV, FR, FMQ, KN, KKK, KUCH, KUCH2, KUCH3, KUCH4, KUCH5, KUCH6, KUCH7, KK, KK2, KK3, KK4, MON, N, Zie}.

Throughout this paper, given real-valued functions $f_1,\ldots, f_k$ on some set $\Omega$, we put $$\sZ(f_1, \ldots, f_k)=\{x\in\Omega :f_1(x)=0, \dots, f_k(x)=0\}.$$

\section{Constructible sets and regulous maps}

In order to fix terminology and notations, we recall some notions introduced in \cite{BCR}, which we recommend for foundations of real algebraic geometry.

A locally ringed space $(V, \reg_V)$, where $\reg_V$ is a sheaf of a real-valued functions on $V$, is called an \textit{affine real algebraic variety} if, for some positive integer $n$, it is isomorphic to an algebraic subset of $\R^n$ endowed with the Zariski topology and the sheaf of real-valued regular functions. More generally, $(V, \reg_V)$ is called a  \textit{real algebraic variety} if there exists a finite open cover $\{U_i\}_{i\in I}$ of $V$ such that each $(U_i, \reg_V\vert_{U_i})$ is an affine real algebraic variety. As usual, we often write $V$ instead of $(V, \reg_V)$ and call $\reg_V$ the \textit{structure sheaf} of $V$. Morphisms of real algebraic varieties are called \textit{regular maps}. The class of real algebraic varieties is identical with the class of quasiprojective real algebraic varieties, cf \cite{BCR}*{Proposition 3.2.10, Theorem 3.4.4}. For this reason, in real algebraic geometry one considers almost exclusively affine varieties. However, non-affine varieties appear in a natural way as the total spaces of pre-algebraic vector bundles, cf. \cite{BCR}*{Chapter 12} and Section 4. Each real algebraic variety carries also the Euclidean topology, which is induced by the standard metric on $\R$.

Recall that a subset $A$ of an affine real algebraic variety $V$ is said to be \textit{constructible} if it belongs to the Boolean algebra generated by the Zariski closed subsets of $V$. Thus $A$ is constructible if and only if it is a finite union of Zariski locally closed subsets of $V$. By a \textit{stratification} of $A$ we mean a finite collection $\sS$ of pairwise disjoint Zariski locally closed subsets of $V$ whose union is $A$.

\begin{prop}\label{stratifications}
Let $V$ be an affine real algebraic variety, $A\subset V$ a constructible subset, and $\sF$ a finite collection of constructible subsets of $V$ with $A\subset\bigcup\sF$. Then there exists a stratification $\sS$ of $A$ such that each stratum of $\sS$ is contained in some element of \sF. Furthermore, such a stratification $\sS$ can be chosen so that each stratum of $\sS$ is an irreducible Zariski locally closed subset of $V$.
\end{prop}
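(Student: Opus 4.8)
The plan is to prove the stronger ``furthermore'' assertion directly, since a stratification by irreducible Zariski locally closed sets is in particular a stratification by Zariski locally closed sets. The argument splits into a combinatorial reduction using the Boolean algebra generated by $\sF$, and a geometric decomposition of a single constructible set into irreducible locally closed pieces.

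First I would pass to the finite Boolean algebra of constructible subsets of $V$ generated by $A$ together with the members of $\sF$. Its atoms (the minimal nonempty elements) are pairwise disjoint constructible sets whose union is $V$, and each atom is, for every generator, either contained in it or disjoint from it. Hence the atoms contained in $A$ are pairwise disjoint, their union is $A$, and because such an atom $B$ satisfies $B\subset A\subset\bigcup\sF$ while being contained in or disjoint from each member of $\sF$, it must lie in some element of $\sF$ (were it disjoint from all of them it would be empty). Thus it suffices to stratify each atom $B\subset A$ by irreducible Zariski locally closed sets: collecting these strata over all atoms inside $A$ yields the desired stratification $\sS$, every stratum of which is automatically contained in a member of $\sF$.

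The geometric core is therefore the claim that any constructible subset $C$ of $V$ is a finite disjoint union of irreducible Zariski locally closed subsets of $V$, which I would establish by Noetherian induction on the Zariski closure, the Zariski topology on $V$ being Noetherian. The key technical step, and the part I expect to require the most care, is the following: if $C$ is nonempty with Zariski closure $Z=\overline{C}$, then $C$ contains a nonempty subset $U$ that is open in $Z$ and irreducible. To see this, write $C$ as a finite union of locally closed sets $L_i=O_i\cap\overline{L_i}$ with $O_i$ Zariski open; then $Z=\bigcup_i\overline{L_i}$, so any irreducible component $W$ of $Z$ lies in some $\overline{L_i}$. Deleting from $W$ the other irreducible components of $Z$ produces $W^\circ=Z\setminus\bigcup_{W'\neq W}W'$, which is nonempty (no component lies in the union of the others), open in $Z$, irreducible, and contained in $\overline{L_i}$. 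Since $L_i$ is dense in $\overline{L_i}$ and $W^\circ$ is a nonempty open subset of $\overline{L_i}$, the open set $O_i$ meets $W^\circ$, and $U:=O_i\cap W^\circ\subset O_i\cap\overline{L_i}=L_i\subset C$ is open in $Z$, irreducible, and nonempty.

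With this step in hand the induction is immediate: $U$ is an irreducible Zariski locally closed subset of $V$ contained in $C$, while $C\setminus U=C\cap(Z\setminus U)$ is a constructible set contained in the proper closed subset $Z\setminus U\subsetneq Z$; by the inductive hypothesis it decomposes into finitely many irreducible locally closed sets, and adjoining $U$ completes the decomposition of $C$. Applying this to each atom $B$ and assembling the pieces via the combinatorial reduction above gives the stratification $\sS$ of $A$, with every stratum irreducible, Zariski locally closed, and contained in a member of $\sF$.
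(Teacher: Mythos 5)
Your proof is correct. The paper itself gives no argument here---its proof says only that the assertions ``readily follow from the definition of constructible sets''---and your write-up (reduction to the atoms of the Boolean algebra generated by $A$ and the members of $\sF$, followed by Noetherian induction that peels off a nonempty irreducible open subset of the Zariski closure) is precisely the standard argument being alluded to, with all details correctly supplied.
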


\begin{proof}
All the assertions readily follow  from the definition of constructible sets.
\end{proof}

\begin{definition}\label{function}
Let $V$, $W$ be affine real algebraic varieties, $A\subset V$ a constructible subset, and $\sS$ a stratification of $A$. A map $\func{f}{A}{W}$ is said to be \sS-\textit{regular} if it is continuous   (in the Euclidean topology) and its restriction $f\vert_S$ is a regular map for every stratum $S\in\sS$. Furthermore, $f$ is said to be \textit{regulous} if it is $\mathcal{T}$-regular for some stratification $\mathcal{T}$ of A.
\end{definition}

Regulous maps coincide with stratified-regular maps introduced in \cite{KK} (where only the case $A=V$ is considered).

\begin{prop}\label{basic_functions}
Let $V$, $W$ be affine real algebraic varieties, $A\subset V$ a constructible subset, and $\func{f}{A}{W}$ a regulous map.
\begin{enumerate}
\item If $A_0\subset V$ is a constructible subset contained in $A$, then $f\vert_{A_0}$ is a regulous map. \label{l1:rest}
\item If $B\subset W$ is a constructible subset, then $f^{-1}(B)\subset V$ is a constructible subset. \label{l1:im}
\end{enumerate}
\end{prop}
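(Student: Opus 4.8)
The plan is to exploit the stratification witnessing the regulousness of $f$, together with Proposition \ref{stratifications} and the elementary behavior of regular maps with respect to the Zariski topology. So I would begin by fixing a stratification $\mathcal{T}$ of $A$ for which $f$ is $\mathcal{T}$-regular; thus $f$ is continuous and $f\vert_T$ is a regular map for every $T\in\mathcal{T}$.

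For assertion \eqref{l1:rest}, first note that $A_0\subset A=\bigcup\mathcal{T}$, so Proposition \ref{stratifications}, applied to the constructible set $A_0$ and the finite collection $\mathcal{T}$, yields a stratification $\mathcal{S}$ of $A_0$ each of whose strata is contained in some element of $\mathcal{T}$. I would then verify that $f\vert_{A_0}$ is $\mathcal{S}$-regular: continuity is inherited from $f$, and for a stratum $S\in\mathcal{S}$ with $S\subset T$ for some $T\in\mathcal{T}$, the restriction $f\vert_S=(f\vert_T)\vert_S$ is the restriction of the regular map $f\vert_T$ to the Zariski locally closed subset $S$, hence is itself regular. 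Consequently $f\vert_{A_0}$ is $\mathcal{S}$-regular, and therefore regulous.

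For assertion \eqref{l1:im}, I would write
$$f^{-1}(B)=\bigcup_{T\in\mathcal{T}}(f\vert_T)^{-1}(B).$$
Each stratum $T$ is a Zariski locally closed subset of $V$ and hence carries the structure of an affine real algebraic variety, and $f\vert_T\colon T\to W$ is a regular map. Since regular maps are continuous for the Zariski topology, the preimage of a Zariski closed set is Zariski closed; as taking preimages commutes with finite unions, intersections and complements, and $B$ is constructible, the set $(f\vert_T)^{-1}(B)$ is constructible in $T$. Finally, a constructible subset of the Zariski locally closed set $T$ is again constructible in $V$, because a subset locally closed in $T$ is locally closed in $V$. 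Hence $f^{-1}(B)$, being a finite union of constructible subsets of $V$, is constructible.

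The only points requiring a little care — and the most likely source of a slip — are, in \eqref{l1:im}, the transfer of constructibility from a stratum to the ambient variety $V$, and, dually, in \eqref{l1:rest}, the fact that naively intersecting the members of $\mathcal{T}$ with $A_0$ need not produce Zariski locally closed pieces. This last observation is precisely why I invoke Proposition \ref{stratifications} to manufacture a genuine stratification of $A_0$ refining $\mathcal{T}$, rather than arguing directly with intersections.
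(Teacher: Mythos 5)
Your proposal is correct and follows essentially the same route as the paper: part (1) by refining the witnessing stratification via Proposition \ref{stratifications}, and part (2) by writing $f^{-1}(B)$ as the finite union of the sets $(f\vert_T)^{-1}(B)$ over the strata and using that preimages of constructible sets under regular maps are constructible. You simply spell out the details that the paper leaves to the reader.
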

\begin{proof}
Condition \ref{l1:rest} follows from Proposition \ref{stratifications}.

To prove \ref{l1:im}, let $\sS$ be a stratification of $A$ such that $f\vert_S$ is a regular map for every stratum $S\in\sS$. Then $(f\vert_S)^{-1}(B)$ is a constructible subset of $V$. We get \ref{l1:im} since $f^{-1}(B)=\bigcup_{S\in\sS}(f\vert_S)^{-1}(B)$.
\end{proof}

The definition of a regulous map is local in the following sense.

\begin{prop}\label{locality}
Let $V, W$ be affine real algebraic varieties, $A\subset V$ a constructible subset, and $\{U_1, \ldots, U_r\}$ an open cover of $A$ in the Euclidean topology. Assume that each $U_i$ is a constructible subset of $V$. For a map $\func{f}{A}{W}$, the following conditions are equivalent:
\begin{enumerate}[label=\upshape(\alph*)]
\item $f$ is regulous.
\item $f\vert_{U_i}$ is regulous for $i=1,\ldots,r$
\end{enumerate}
\end{prop}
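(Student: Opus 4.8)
The plan is to handle the two implications separately; the implication (a) $\Rightarrow$ (b) is immediate, while (b) $\Rightarrow$ (a) carries the content. For (a) $\Rightarrow$ (b), note that each $U_i$ is by assumption a constructible subset of $V$ contained in $A$, so Proposition \ref{basic_functions}\ref{l1:rest} shows at once that $f\vert_{U_i}$ is regulous.

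For the reverse implication, suppose $f\vert_{U_i}$ is regulous for every $i$. The argument splits into verifying continuity of $f$ and producing a suitable stratification. Continuity is local in the Euclidean topology: since $\{U_1,\dots,U_r\}$ is a Euclidean-open cover of $A$ and each restriction $f\vert_{U_i}$ is continuous, $f$ itself is continuous on $A$. For the stratification, choose for each $i$ a stratification $\sS_i$ of $U_i$ with the property that $f\vert_S$ is a regular map for every stratum $S\in\sS_i$. Set $\sF=\bigcup_{i=1}^r \sS_i$, a finite collection of Zariski locally closed (hence constructible) subsets of $V$ whose union is $\bigcup_{i=1}^r U_i=A$. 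Applying Proposition \ref{stratifications} to $A$ and this collection $\sF$ yields a stratification $\sS$ of $A$ each of whose strata is contained in some member of $\sF$.

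It then remains to check that $f$ is $\sS$-regular. Fix a stratum $S\in\sS$. By construction $S\subset T$ for some $T\in\sF$, say $T\in\sS_i$, and since $f\vert_{U_i}$ is $\sS_i$-regular the restriction $f\vert_T$ is regular; consequently $f\vert_S=(f\vert_T)\vert_S$ is regular, being the restriction of a regular map to a Zariski locally closed subset. Thus $f$ is continuous and restricts to a regular map on every stratum of $\sS$, so $f$ is $\sS$-regular and hence regulous. The proof is essentially formal once Propositions \ref{stratifications} and \ref{basic_functions} are in hand; the only non-combinatorial point is the gluing that yields continuity of $f$ from the continuity of the $f\vert_{U_i}$, which I expect to be the (mild) main obstacle.
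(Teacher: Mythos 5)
Your argument is correct and is exactly the intended expansion of the paper's one-line proof, which simply says "It suffices to apply Proposition \ref{stratifications}": you refine the union of the stratifications of the $U_i$ into a stratification of $A$ and glue continuity on the Euclidean-open cover. No issues.
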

\begin{proof}
It suffices to apply Proposition \ref{stratifications}.
\end{proof}

The composition of regulous maps is a regulous map. More precisely, the following holds:

\begin{prop}\label{composition}
Let $V$, $W$, $Z$ be affine real algebraic varieties, $A\subset V$, $B\subset W$ constructible subsets, and $\func{f}{A}{W}$, $\func{g}{B}{Z}$ regulous maps with $f(A)\subset B$. Then the map $\func{h}{A}{Z}$, defined by $h(x)=g(f(x))$ for all $x\in A$, is a regulous map.
\end{prop}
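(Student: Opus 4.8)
The plan is to verify the two defining conditions of a regulous map for $h$ separately. Continuity in the Euclidean topology is immediate: since $f(A)\subset B$, the map $h$ is the composition of the Euclidean-continuous maps $f$ and $g$, hence continuous. So the whole content of the argument is to exhibit a single stratification $\sS$ of $A$ for which $h\vert_S$ is regular on every stratum $S\in\sS$.

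To build $\sS$, I would first choose stratifications witnessing that $f$ and $g$ are regulous: a stratification $\mathcal{U}$ of $A$ with $f\vert_U$ regular for every $U\in\mathcal{U}$, and a stratification $\mathcal{T}$ of $B$ with $g\vert_T$ regular for every $T\in\mathcal{T}$. The difficulty is that a single stratum $U$ need not be sent by $f$ into a single stratum of $\mathcal{T}$, so regularity of $f\vert_U$ and of the various $g\vert_T$ cannot be composed directly. The remedy is to refine along the preimages of $\mathcal{T}$. Each $T$ is Zariski locally closed, hence constructible, so by Proposition \ref{basic_functions} the preimage $f^{-1}(T)\subset V$ is constructible. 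Taking $\sF=\{U\cap f^{-1}(T):U\in\mathcal{U},\ T\in\mathcal{T}\}$, a finite collection of constructible subsets of $V$, and noting that $\mathcal{U}$ covers $A$ while $\mathcal{T}$ covers $B\supset f(A)$ so that $A\subset\bigcup\sF$, I would invoke Proposition \ref{stratifications} to obtain a stratification $\sS$ of $A$ each of whose strata is contained in some member $U\cap f^{-1}(T)$ of $\sF$.

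It then remains to check regularity of $h\vert_S$. Fix $S\in\sS$ with $S\subset U\cap f^{-1}(T)$. The restriction $f\vert_S$ is regular, being the restriction of the regular map $f\vert_U$ to the Zariski locally closed subset $S\subset U$, and it maps $S$ into $T$ because $S\subset f^{-1}(T)$. Composing with the regular map $g\vert_T$ gives $h\vert_S=(g\vert_T)\circ(f\vert_S)$, a composition of regular maps, which is therefore regular. Hence $h$ is $\sS$-regular, and so regulous.

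The only genuinely substantive step is the passage to the common refinement: the point of intersecting the strata of $\mathcal{U}$ with the preimages $f^{-1}(T)$ is precisely to arrange that each stratum $S$ lands under $f$ inside one stratum $T$ on which $g$ is regular. Once this is secured the regularity on each stratum is a formal consequence of the facts that regular maps compose and restrict to regular maps on Zariski locally closed subsets, and the availability of Proposition \ref{basic_functions} (constructibility of preimages) together with Proposition \ref{stratifications} (refinement into a stratification) makes the construction routine.
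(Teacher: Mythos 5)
Your proof is correct and is exactly the argument the paper intends: the paper's one-line proof (``apply Proposition \ref{stratifications}'') is an abbreviation of precisely this refinement construction, intersecting the strata of a stratification for $f$ with the preimages $f^{-1}(T)$ of the strata for $g$ (constructible by Proposition \ref{basic_functions}) and then composing regular maps stratum by stratum. No issues.
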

\begin{proof}
Once again it is enough to apply Proposition \ref{stratifications}.
\end{proof}

Let $V$ be an affine real algebraic variety and $A\subset V$ a constructible subset. It readily follows form Proposition \ref{stratifications} that the set $\reg^0(A)$ of all regulous functions on $A$ (that is, regulous maps from $A$ into $\R$) forms a subring of the ring $\mathcal{C}(A)$ of all continuous (in the Euclidean topology) functions on $A$.

\begin{prop} Let $V$ be an affine real algebraic variety, $A\subset V$ a constructible subset, and $f_1,\ldots, f_r$ regulous functions on $A$. Then $\sZ(f_1,\ldots, f_r)$ is a constructible subset of $V$.
\end{prop}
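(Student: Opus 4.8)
The plan is to reduce the statement to pulling back a single point, and then to invoke the machinery already assembled in Proposition~\ref{basic_functions}. The key observation is that the problem is really about preimages: for a regulous function $f_i\colon A\to\R$, the set $\sZ(f_i)$ is precisely $f_i^{-1}(\{0\})$, so I only need to know that $\{0\}$ is a nice enough subset of $\R$ and that regulous maps behave well under preimages.

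First I would note that $\{0\}$ is a Zariski closed subset of $\R$ (it is the zero set of the coordinate polynomial), hence constructible. Since each $f_i\colon A\to\R$ is regulous, Proposition~\ref{basic_functions}, condition~\ref{l1:im}, applied with $W=\R$ and $B=\{0\}$, shows that $\sZ(f_i)=f_i^{-1}(\{0\})$ is a constructible subset of $V$ for every $i$. Finally I would write $\sZ(f_1,\ldots,f_r)=\bigcap_{i=1}^r\sZ(f_i)$ and use that the constructible subsets of $V$ form a Boolean algebra, so that this finite intersection is again constructible. This completes the argument.

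If one prefers an argument not citing Proposition~\ref{basic_functions}, the mechanism underneath is the following. Form the common refinement $\sS$ of stratifications adapted to each $f_i$ (obtained by intersecting strata, so that every $f_i\vert_S$ is a regular map on each stratum $S$). Then $\sZ(f_1,\ldots,f_r)=\bigcup_{S\in\sS}\sZ(f_1\vert_S,\ldots,f_r\vert_S)$, and on each stratum the set $\sZ(f_1\vert_S,\ldots,f_r\vert_S)$ is the preimage of $\{0\}\subset\R^r$ under a regular map $S\to\R^r$. As regular maps are Zariski continuous, this preimage is Zariski closed in the Zariski locally closed set $S$, hence itself Zariski locally closed in $V$; a finite union of such sets is constructible.

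I do not expect any genuine obstacle here. The only point requiring a moment's care is the claim, used implicitly above, that a Zariski closed subset of a Zariski locally closed subset of $V$ is again Zariski locally closed in $V$; this is immediate by writing the stratum as $S=C\cap U$ with $C$ Zariski closed and $U$ Zariski open, and intersecting with the Zariski closure of the subset. Beyond this, the content is entirely bookkeeping with the Boolean algebra of constructible sets together with the Zariski continuity of regular maps.
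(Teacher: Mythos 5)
Your argument is correct and rests on the same key fact as the paper, namely Proposition~\ref{basic_functions}~\ref{l1:im} applied to the constructible set $\{0\}\subset\R$. The only (immaterial) difference is the reduction to a single application of that lemma: the paper replaces $f_1,\ldots,f_r$ by the single regulous function $f_1^2+\cdots+f_r^2$, whereas you intersect the individual zero sets and invoke the Boolean algebra of constructible sets.
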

\begin{proof}
Setting $f=f_1^2+\ldots+f_r^2$, we get $\sZ(f)=\sZ(f_1,\ldots, f_r)$. It suffices to apply Proposition \ref{basic_functions} \ref{l1:im}.
\end{proof}

\section{The constructible topology}

For any real algebraic variety $V$, we denote by $V^{\text{\upshape ns}}$ its set of nonsingular points, that is,
\begin{displaymath}
V^{\text{\upshape ns}}=\{x\in V\text{: the local ring }\reg_{V,x}\text{ is regular}\}
\end{displaymath}
where $\reg_V$ is the structure sheaf of $V$. Recall that $V^{\text{\upshape ns}}$ is a Zariski open dense subset of $V$.

\begin{prop}\label{nonsingular}
Let $V$ be an affine real algebraic variety. Let $A\subset V$ be a constructible subset that is Euclidean closed and let $W$ be the Zariski closure of $A$ in $V$. Then $W^{\text{\upshape ns}}\subset A$.
\end{prop}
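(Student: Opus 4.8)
The plan is to produce inside $A$ a Zariski-dense subset of $W$ that is ``large'' in the Euclidean sense near every nonsingular point, and then to exploit the hypothesis that $A$ is Euclidean closed. Throughout, write $\overline{B}$ for the Euclidean closure of a subset $B$. First I would record the relevant structural fact about constructible sets: applying Proposition~\ref{stratifications} (with $\sF=\{A\}$) gives a stratification of $A$ into irreducible Zariski locally closed strata. For each irreducible component $W_j$ of $W$, at least one stratum has Zariski closure equal to $W_j$ (since $W_j$ is irreducible and contained in the union of the Zariski closures of the strata), and such a stratum $\Omega_j$, being Zariski locally closed with Zariski closure $W_j$, is a Zariski-dense open subset of $W_j$ contained in $A$. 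Since $\overline{A}=A$, it suffices to show that every point of $W^{\mathrm{ns}}$ lies in $\overline{\Omega_j}$ for the component $W_j$ passing through it.

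Next I would analyze $W$ near a fixed point $x\in W^{\mathrm{ns}}$. Because the local ring $\reg_{W,x}$ is regular, it is an integral domain, so $x$ lies on a unique irreducible component $W_j$ of $W$; as the other components are Zariski closed and miss $x$, the variety $W$ coincides with $W_j$ on a Euclidean neighborhood of $x$. Furthermore, at a nonsingular point $W_j$ is, in a Euclidean neighborhood of $x$, a $C^{\infty}$ submanifold of $\R^n$ of dimension $d_j=\dim W_j$; this is the standard local description of a real algebraic variety at a nonsingular point, cf.~\cite{BCR}. On the other hand, $\Omega_j$ is Zariski open and dense in the irreducible variety $W_j$, so its complement $W_j\setminus\Omega_j$ is a real algebraic subset of $W_j$ of dimension strictly less than $d_j$.

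I would then combine these two observations. A real algebraic subset of dimension $<d_j$ contained in a $d_j$-dimensional $C^{\infty}$ manifold has empty Euclidean interior, hence is nowhere dense; consequently $\Omega_j$ is Euclidean dense in $W_j$ in a neighborhood of $x$. Therefore $x\in\overline{\Omega_j}\subset\overline{A}=A$. Since $x\in W^{\mathrm{ns}}$ was arbitrary, this yields $W^{\mathrm{ns}}\subset A$, as required.

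I expect the main obstacle to be the justification of the local manifold structure at a nonsingular \emph{real} point, together with the attendant passage from Zariski density to Euclidean density. The subtlety is that nonsingularity is defined here through regularity of the local ring $\reg_{W,x}$, so one must know that this forces the complexification of $W$ to be smooth at $x$ (which holds because $\C/\R$ is separable) and hence that the real locus is genuinely a $d_j$-dimensional manifold near $x$, rather than, say, a lower-dimensional set carrying an isolated real point; only then can one guarantee that the lower-dimensional algebraic set $W_j\setminus\Omega_j$ fails to separate $x$ from $\Omega_j$. The remaining ingredients --- the stratification of $A$ and the dimension count --- are routine consequences of Proposition~\ref{stratifications} and basic dimension theory.
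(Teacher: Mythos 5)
Your proposal is correct and takes essentially the same route as the paper: stratify $A$ into irreducible locally closed pieces, exhibit for each irreducible component $W_j$ of $W$ a Zariski-dense locally closed subset of $W_j$ contained in $A$ whose complement in $W_j$ has dimension $<\dim W_j$, use the local manifold structure at nonsingular points to upgrade Zariski density to Euclidean density, and conclude from $\overline{A}=A$. The only cosmetic difference is that the paper aggregates all strata whose closure lies in $W_j$ (obtaining $W_j\setminus W_j'\subset A$ with $\dim W_j'<\dim W_j$) and argues on all of $W_j^{\text{\upshape ns}}$ at once, whereas you work pointwise with a single dense stratum $\Omega_j$; you also spell out the justification of Euclidean density at a nonsingular point, which the paper merely asserts.
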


\begin{proof}
By Proposition \ref{stratifications}, $A$ can be written as a finite disjoint union 
$$A=\bigcup_{i\in I}(Z_i\setminus Z'_i)$$ 
where $Z'_i\subset Z_i$ are Zariski closed subsets with $Z_i'\neq Z_i$ and $Z_i$ irreducible for all $i\in I$. Let $W_1,\ldots, W_r$ be the irreducible components of $W$. Since $Z_i$ is irreducible, we have $Z_i\subset W_j$ for some $j$. Setting $I_j=\{i\in I\text{ : }Z_i\subset W_j\}$ we get $$W_j=\bigcup_{i\in I_j}Z_i$$ for all $j=1,\ldots, r$. Furthermore, if $$W_j':=\bigcup_{i\in I_j}Z'_i,$$
then $\dim(W'_j)<\dim(W_j)$ and $$(W_1\setminus W_1')\cup\ldots\cup(W_r\setminus W_r')\subset A.$$
In particular $W_j^{\text{\upshape ns}}\setminus W_j'\subset A$ for every $j$. Since $W^{\text{\upshape ns}}_j\setminus W'_j$ is Euclidean dense in $W^{\text{\upshape ns}}_j$, and $A$ is Euclidean closed, we get $W_j^{\text{\upshape ns}}\subset A$. Hence $$W^{\text{\upshape ns}}\subset W^{\text{\upshape ns}}_1\cup\ldots\cup W_r^{\text{\upshape ns}}\subset A,$$ as required.
\end{proof}
 
\begin{prop}\label{noeth}
Let $V$ be an affine real algebraic variety and let 
$$A_1\supset A_2\supset A_3\supset\ldots$$
 be a sequence of constructible subsets of $V$ that are Euclidean closed. Then the sequence stabilizes, that is, there exists an integer $k>0$ such that $A_i=A_k$ for all $i\geq k$.
\end{prop}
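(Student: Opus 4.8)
The plan is to argue by induction on $d=\dim V$, using Proposition \ref{nonsingular} to peel off the bulk of each $A_i$ and thereby reduce to a strictly lower-dimensional variety. The base case $d=0$ is immediate: then $V$ is a finite set, and a descending sequence of subsets of a finite set clearly stabilizes.

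For the inductive step, I would first pass to Zariski closures. Write $W_i=\overline{A_i}$ for the Zariski closure of $A_i$ in $V$. Since $A_{i+1}\subset A_i$, we have $W_{i+1}\subset W_i$, and because the Zariski topology on $V$ is Noetherian this descending chain of Zariski closed sets stabilizes: there is $k_0$ with $W_i=W$ for all $i\ge k_0$. Replacing the sequence by its tail, I may assume $W_i=W$ for every $i$. Now each $A_i$ is a Euclidean closed constructible set whose Zariski closure is exactly $W$, so Proposition \ref{nonsingular} applies and yields $W^{\mathrm{ns}}\subset A_i$ for all $i$.

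Set $S:=W\setminus W^{\mathrm{ns}}$. This is a Zariski closed subset of $W$ (hence an affine real algebraic variety), and since $W^{\mathrm{ns}}$ is Zariski dense in $W$ and contains a dense open subset of every maximal-dimensional irreducible component of $W$, we have $\dim S<\dim W\le d$. From $W^{\mathrm{ns}}\subset A_i\subset W=W^{\mathrm{ns}}\sqcup S$ I obtain the disjoint decomposition $A_i=W^{\mathrm{ns}}\sqcup B_i$, where $B_i:=A_i\cap S$. Thus the sequence $(A_i)$ is entirely controlled by the sets $B_i$, since $W^{\mathrm{ns}}$ is fixed. The $B_i$ form a descending sequence of constructible subsets of $S$, and each is Euclidean closed, being the intersection of the Euclidean closed set $A_i$ with the Zariski (hence Euclidean) closed set $S$. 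As $\dim S<d$, the inductive hypothesis applied to the variety $S$ and the sequence $(B_i)$ shows it stabilizes: there is $k_1\ge k_0$ with $B_i=B_{k_1}$ for all $i\ge k_1$. Combined with $A_i=W^{\mathrm{ns}}\sqcup B_i$, this gives $A_i=A_{k_1}$ for all $i\ge k_1$, completing the induction.

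The one step doing real work is the dimension drop $\dim S<\dim W$, which is exactly where Proposition \ref{nonsingular} is essential: without the inclusion $W^{\mathrm{ns}}\subset A_i$ the sets $A_i$ could a priori keep shrinking forever inside a fixed top-dimensional $W$, and no Noetherian argument on closures alone would prevent this. Once the sequence is forced to contain all of $W^{\mathrm{ns}}$, its only remaining freedom lives in the lower-dimensional singular locus $S$, and the induction closes. Everything else is routine bookkeeping about constructibility and the subspace Euclidean topology.
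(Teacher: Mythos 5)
Your proof is correct and follows essentially the same route as the paper: stabilize the Zariski closures $W_i$, invoke Proposition \ref{nonsingular} to get $W^{\mathrm{ns}}\subset A_i$, and induct on dimension after peeling off $W^{\mathrm{ns}}$. The only cosmetic difference is that you run the induction over the subvariety $S=W\setminus W^{\mathrm{ns}}$ as a new ambient variety, while the paper keeps the sets $A_i\setminus W^{\mathrm{ns}}$ inside $V$ and inducts on $\dim(W_1)$.
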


\begin{proof}
Let $W_i$ be the Zariski closure of $A_i$ in $V$ . We use induction on $\dim(W_1)$. In the case $\dim(W_1)=0$ the assertion is obvious since the considered sets are finite. 

Suppose that $\dim(W_1)>0$. The sequence $W_1\supset W_2\supset W_3\supset\ldots$ stabilizes and hence there exists an integer $l>0$ such that $W_l=W_i$ for all $i\geq l$. Set $W:=W_l$. By Proposition \ref{nonsingular}, we get a sequence $$A_l\setminus W^{\text{\upshape ns}}\supset A_{l+1}\setminus W^{\text{\upshape ns}}\supset A_{l+2}\setminus W^{\text{\upshape ns}}\supset\ldots$$ of constructible subsets of $V$ which are Euclidean closed. This sequence stabilizes by the induction hypothesis since $A_l\setminus W^{\text{\upshape ns}}\subset W\setminus W^{\text{\upshape ns}}$ and $\dim(W\setminus W^{\text{\upshape ns}})<\dim(W)\leq\dim(W_1)$. It follows that the sequence $A_l\supset A_{l+1}\supset A_{l+2}\supset\ldots$ stabilizes, which completes the proof.
\end{proof}
Different proofs of Proposition \ref{noeth} are given in \cites{FR, Kur, KPA, N, Par}.
\begin{cor}
Let $V$ be an affine real algebraic variety. The collection of all Euclidean closed constructible sets is the family of closed sets for some topology on $V$, called the \textit{constructible topology}. The constructible topology is Noetherian.
\end{cor}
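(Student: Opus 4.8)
The plan is to verify the three axioms for a family of closed sets and then read off Noetherianity directly from Proposition \ref{noeth}. Write $\mathcal{F}$ for the collection of all Euclidean closed constructible subsets of $V$. First I would dispose of the easy axioms: both $\emptyset$ and $V$ are Zariski closed, hence constructible, and Zariski closed sets are Euclidean closed, so $\emptyset, V \in \mathcal{F}$. Likewise, a finite union of constructible sets is constructible and a finite union of Euclidean closed sets is Euclidean closed, so $\mathcal{F}$ is stable under finite unions.

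The substantive point is stability under arbitrary intersections, and here the Noetherian property is essential, since an arbitrary intersection of constructible sets need not be constructible. Given a nonempty family $\{C_\alpha\}_{\alpha \in \Lambda}$ in $\mathcal{F}$ (the empty intersection gives $V \in \mathcal{F}$), I would consider the collection $\mathcal{G}$ of all finite intersections $C_{\alpha_1} \cap \cdots \cap C_{\alpha_k}$. Each member of $\mathcal{G}$ lies in $\mathcal{F}$, because finite intersections of constructible (resp. Euclidean closed) sets are again constructible (resp. Euclidean closed). By Proposition \ref{noeth} the descending chain condition holds on $\mathcal{F}$, and hence the nonempty subfamily $\mathcal{G}$ admits a minimal element: were this to fail, dependent choice would produce an infinite strictly descending chain in $\mathcal{F}$, contradicting Proposition \ref{noeth}.

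Let $C := C_{\alpha_1} \cap \cdots \cap C_{\alpha_k}$ be such a minimal element. I claim $C = \bigcap_{\alpha \in \Lambda} C_\alpha$. For any $\beta \in \Lambda$, the set $C \cap C_\beta$ again belongs to $\mathcal{G}$ and is contained in $C$, so minimality forces $C \cap C_\beta = C$, that is $C \subset C_\beta$. As $\beta$ is arbitrary, $C \subset \bigcap_\alpha C_\alpha$, and the reverse inclusion is immediate. Thus the arbitrary intersection coincides with the finite intersection $C$ and therefore lies in $\mathcal{F}$.

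Having checked the three axioms, $\mathcal{F}$ is the family of closed sets of a topology on $V$. Finally, that this topology is Noetherian is precisely the assertion that every descending chain of closed sets stabilizes, which is exactly the content of Proposition \ref{noeth}. I expect the only real obstacle to be the arbitrary-intersection step, and more precisely the passage from the chain condition of Proposition \ref{noeth} to the existence of a minimal element in $\mathcal{G}$, which I would justify by the dependent-choice argument indicated above.
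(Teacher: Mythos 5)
Your proof is correct and follows the same route the paper intends: the paper simply declares the corollary a straightforward consequence of Proposition \ref{noeth}, and your argument is exactly the standard elaboration of that claim, with the only substantive step (closure under arbitrary intersections via a minimal element of the family of finite intersections, extracted from the descending chain condition) handled properly.
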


\begin{proof}
This is a straightforward consequence of Proposition \ref{noeth}.
\end{proof}

Any subset $S\subset V$ can be endowed with the topology induced by the constructible topology on $V$; this topology on $S$ is also Noetherian and called the \textit{constructible topology}. For the sake of convenience we will refer, when it does not lead to any ambiguity, to the open (resp. closed) sets in the constructible topology as constructible open (resp. constructible closed) sets.

The following classical result on semialgebraic functions will play a crucial role in this paper (cf. \cite{BCR}*{Proposition 2.6.4} for the proof).

\begin{prop}\label{Lojasiewicz}
Let $\func{f}{A}{\R}$ be a continuous semialgebraic function defined on a locally closed semialgebraic subset $A\subset\R^n$. Let $\func{g}{A\setminus\sZ(f)}{\R}$ be a continuous semialgebraic function. Then for any integer $N$ large enough, the function 
$$\func{h}{A}{\R}\text{ defined by }h=f^Ng\text{ on } A\setminus\sZ(f)\text{ and } h=0\text{ on }\sZ(f)$$ is continuous. \qed
\end{prop}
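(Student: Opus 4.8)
The plan is to reduce the statement to a single continuity estimate along $\sZ(f)$ and then feed that estimate into the semialgebraic \L ojasiewicz inequality. First I would observe that on the open set $A\setminus\sZ(f)$ the function $h=f^Ng$ is a product of continuous functions, hence continuous, so the only points where continuity can fail are those of the closed set $\sZ(f)$. Since continuity is local, I would fix $a\in\sZ(f)$ and use that $A$ is locally closed to choose a closed ball $\bar B(a,r)$ for which $K:=A\cap\bar B(a,r)$ is compact and semialgebraic; it then suffices to show $h(x)\to 0$ as $x\to a$ within $K$. I would also record the elementary monotonicity remark that if $h$ is continuous for the exponent $N_0$, then it is continuous for every $N\ge N_0$, because $f^{N}g=f^{N-N_0}\cdot f^{N_0}g$ and $f^{N-N_0}$ is continuous on $A$; thus it is enough to produce one admissible exponent.

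The key device is the auxiliary function $\theta\colon K\to\R$ defined by $\theta=f^2/(1+g^2)$ on $K\setminus\sZ(f)$ and $\theta=0$ on $\sZ(f)$. The bound $0\le\theta\le f^2$ shows that $\theta$ is continuous at each point of $\sZ(f)$, hence continuous and semialgebraic on all of $K$, with $\sZ(\theta)\cap K=\sZ(f)\cap K=\sZ(f^2)\cap K$. I would then invoke the \L ojasiewicz separation inequality for continuous semialgebraic functions on the compact set $K$: since $\sZ(\theta)\subset\sZ(f^2)$, there exist $c>0$ and $M\in\N$ with
$$f^{2M}\le c\,\theta\quad\text{on }K.$$

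Finally I would combine these facts. From $\theta=f^2/(1+g^2)$ one gets $g^2=(f^2-\theta)/\theta$ on $K\setminus\sZ(f)$, so, using $1/\theta\le c/f^{2M}$,
$$\bigl(f^Ng\bigr)^2=f^{2N}g^2=\frac{f^{2N+2}}{\theta}-f^{2N}\le c\,f^{2(N+1-M)}-f^{2N}.$$
For any $N\ge M$ the exponent $2(N+1-M)$ is positive, so the right-hand side tends to $0$ as $x\to a$; since the left-hand side is nonnegative, $f^Ng\to 0=h(a)$, proving continuity at $a$. Together with the monotonicity remark this gives a single admissible $N$ and hence the ``for all large $N$'' conclusion. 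The one point that needs care is the passage from the local exponents $M=M(a)$ to a single $N$ valid on all of $A$; this I would obtain from the global, weighted form of the \L ojasiewicz inequality (the factor $(1+\lVert x\rVert^2)^k$ being locally bounded, hence harmless for continuity at finite points), or from the definability of the minimal admissible exponent as a function on $\sZ(f)$. The genuine analytic input, and the step I expect to be the real obstacle, is the \L ojasiewicz inequality itself; once the auxiliary function $\theta$ is in place, the remainder is purely formal.
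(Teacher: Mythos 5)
Your argument is not competing with an in-paper proof: the paper states this proposition as a classical result and refers to Bochnak--Coste--Roy, Proposition 2.6.4, without giving an argument. In spirit your proof is close to the standard one, since the engine in both cases is the \L{}ojasiewicz inequality. Your local argument is complete and correct: the auxiliary function $\theta=f^2/(1+g^2)$ is continuous and semialgebraic on the compact piece $K$ because $0\le\theta\le f^2$, it has the same zero set as $f$ there, and the computation
$$(f^Ng)^2=\frac{f^{2N+2}}{\theta}-f^{2N}\le c\,f^{2(N+1-M)},$$
obtained from $f^{2M}\le c\,\theta$ after discarding the nonpositive term $-f^{2N}$, does give $h\to 0$ at $a$ for every $N\ge M$. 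Together with your monotonicity remark this fully settles the case where $\sZ(f)$ is compact.

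The genuine issue is exactly the one you flag yourself: producing a single exponent valid on a non-compact $\sZ(f)$, which is what the statement actually asserts. Of your two proposed fixes, the second does not work as stated. For each $N$ the set $S_N$ of points of $\sZ(f)$ at which $f^Ng$ extends continuously by $0$ is semialgebraic, and $S_N\subset S_{N+1}$ with $\bigcup_N S_N=\sZ(f)$ by your local argument; but an increasing sequence of semialgebraic sets exhausting a semialgebraic set need not stabilize (compare $S_N=\{0\}\cup[1/N,1]$ inside $[0,1]$), and the ``minimal admissible exponent'' is an integer-valued function whose boundedness is precisely what has to be proved. The first fix is the right one, but it needs one further reduction: the weighted inequality $f^{2M}\le c\,(1+\lVert x\rVert^2)^k\,\theta$ holds on \emph{closed} semialgebraic sets and can fail on locally closed ones (take $A=(0,1)$, $u\equiv 1$, $v(x)=x$, where $\sZ(v)\subset\sZ(u)$ holds vacuously but no power of the weight rescues the inequality near $0$). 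You should first replace $A$ by its image under the closed semialgebraic embedding $x\mapsto\bigl(x,\,1/\operatorname{dist}(x,\overline{A}\setminus A)\bigr)$, which makes $A$ closed without changing $f$, $g$, or the continuity problem; the resulting weight is then locally bounded at every point of $A$, hence harmless, and your computation goes through verbatim with a single $M$. With that patch the proof is correct and self-contained modulo the \L{}ojasiewicz inequality itself, which is essentially how the cited source argues as well.
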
 

It will be convenient to have the following description of closed sets in the constructible topology, cf. \cite{FR}*{Th{\'e}or{\`e}me 6.4}.

\begin{prop}\label{closed=zero}
Let $V$ be an affine real algebraic variety and let $A\subset V$ be a constructible closed set. Then there exists a regulous function $\func{f}{V}{\R}$ with $\sZ(f)=A$.
\end{prop}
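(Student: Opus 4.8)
The plan is to argue by induction on the dimension $d$ of the Zariski closure $W$ of $A$. When $d\le 0$ the set $A$ is finite: if $A=\{a_1,\dots,a_m\}$ then, viewing $V$ as an algebraic subset of some $\R^n$, the regular function $f=\prod_{i=1}^m\lVert x-a_i\rVert^2$ satisfies $\sZ(f)=A$, while $A=\emptyset$ is handled by $f\equiv 1$. For the inductive step I would first choose a regular function $g$ with $\sZ(g)=W$ (a sum of squares of generators of the ideal of $W$ in $V$), so that $\sZ(g^2)=W\supseteq A$ and $g^2$ over-vanishes exactly on $W\setminus A$; the whole task is then to cancel this surplus zero locus.

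The structural input is Proposition \ref{nonsingular}: as $A$ is Euclidean closed with Zariski closure $W$, we have $W^{\mathrm{ns}}\subseteq A$. Hence $W\setminus A$ lies in the singular locus $Z:=W\setminus W^{\mathrm{ns}}$, which is Zariski closed with $\dim Z<d$ (because $W^{\mathrm{ns}}$ is Zariski dense in $W$), and moreover $\overline{W^{\mathrm{ns}}}\cap(W\setminus A)=\emptyset$ in the Euclidean topology, since $\overline{W^{\mathrm{ns}}}\subseteq A$. Setting $B:=A\cap Z$, which is Euclidean closed and constructible with $\dim\overline B<d$, the inductive hypothesis provides a regulous function $h\ge 0$ (replacing it by its square) with $\sZ(h)=B$; note $h>0$ on $W\setminus A=Z\setminus B$. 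I would then look for $f=g^2+\psi$ with $\psi\ge 0$ regulous, $\psi=0$ on $A$, and $\psi>0$ on $W\setminus A$. Because both summands are nonnegative, this forces $\sZ(f)=\sZ(g)\cap\sZ(\psi)=W\cap\sZ(\psi)=A$, as wanted.

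Constructing the correction $\psi$ is the main obstacle. It must vanish on the whole of $\overline{W^{\mathrm{ns}}}$ yet be positive on $Z\setminus B$, and \emph{no} polynomial can achieve this, since a regular function vanishing on the Zariski-dense set $W^{\mathrm{ns}}$ vanishes on all of $W$. One is therefore forced out of the regular category and must exploit the genuine flexibility of regulous functions: for instance $\lvert p\rvert=\sqrt{p^2}$ and $\max(0,p)$ are regulous, being continuous and regular on the strata $\{p>0\}$, $\{p=0\}$, $\{p<0\}$. The plan is to take $\psi=h^{2N}\rho$, where $\rho$ is a bounded function equal to $0$ on $\overline{W^{\mathrm{ns}}}$ and to $1$ on $Z\setminus B$; such a $\rho$, continuous on $V\setminus B$, exists precisely because these two sets are Euclidean-separated away from $B$, and the only locus where $\rho$ is forced to be discontinuous is $B=\sZ(h)$, where the factor $h$ degenerates. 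Proposition \ref{Lojasiewicz}, applied with the Łojasiewicz exponent $N$ large, then secures that $h^{2N}\rho$ extends by $0$ across $B$ to a continuous function on $V$. The delicate point — the step I expect to require the most care — is to arrange this construction so that $\psi$ is not merely continuous but genuinely regulous: here I would fix, via Proposition \ref{stratifications}, a stratification of $V$ adapted to $Z$, $B$ and $W^{\mathrm{ns}}$, build $\rho$ from regular data on the strata so that $h^{2N}\rho$ is stratwise regular, and invoke Proposition \ref{Lojasiewicz} to upgrade stratwise regularity plus the required vanishing to continuity. This yields the sought regulous $\psi$, and with it $f=g^2+\psi$, closing the induction.
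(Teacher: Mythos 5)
Your overall skeleton coincides with the paper's: induction on $\dim W$, Proposition \ref{nonsingular} to locate the discrepancy $W\setminus A$ inside the singular locus $Z=W\setminus W^{\mathrm{ns}}$, the induction hypothesis applied to $B=A\cap Z$, and Proposition \ref{Lojasiewicz} to extend across $B$. The additive packaging $f=g^2+h^{2N}\rho$ instead of the paper's product $\gamma^{N'}\beta$ is cosmetic (indeed the summand $g^2$ is redundant once $\sZ(h^{2N}\rho)=A$). The problem is that the entire content of the inductive step has been deferred to the unconstructed function $\rho$, and the existence argument you sketch for it does not close. You need a \emph{regulous} function on $V\setminus B$ that vanishes on $\overline{W^{\mathrm{ns}}}\setminus B$ and equals $1$ on $Z\setminus B$. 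Euclidean separation of these two disjoint closed sets produces a continuous Urysohn function, not a regulous one, and proving that two disjoint constructible closed sets can be separated regulously is essentially of the same depth as the proposition itself. The stratification device does not rescue this: a function prescribed stratum by stratum is automatically stratwise regular, so the issue is never stratwise regularity but continuity across strata — concretely, the value of $\rho$ on the dense stratum $V\setminus W$ must tend to $0$ at points of $W^{\mathrm{ns}}$ and to $1$ at points of $Z\setminus B$. Your one quantitative tool, damping by $h^{2N}$ with $\sZ(h)=B$, controls nothing at points of $Z\setminus B$, which is exactly where the continuity of $\rho$ is in doubt; your remark that ``the only locus where $\rho$ is forced to be discontinuous is $B$'' is an assertion, not a consequence of anything you have built.

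The paper fills precisely this gap with an explicit formula. Choose regular functions $\phi,\psi$ with $\sZ(\phi)=W$ and $\sZ(\psi)=Z$, and set
$$\rho=\frac{\phi^2}{\phi^2+\psi^{2N}}\ \text{ on }\ \R^n\setminus Z,\qquad \rho=1\ \text{ on }\ Z\setminus B.$$
This is regular off $Z$ and takes values in $[0,1]$; near $W^{\mathrm{ns}}$ the numerator vanishes while the denominator does not, so $\rho=0$ there. The delicate point, continuity at $Z\setminus B$, is handled by writing $\rho=1/(1+\alpha)$ with $\alpha=\psi^{2N}/\phi^2$ and applying Proposition \ref{Lojasiewicz} to the pair $1/\phi^2$ and $\psi$ on $\R^n\setminus A$, which forces $\alpha\to 0$, hence $\rho\to 1$, at points of $Z\setminus B$. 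This single rational trick is the genuinely nontrivial step of the proof; once it is in place, $\rho$ is exactly the separating function you postulate and the remainder of your argument goes through.
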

\begin{proof}
We may assume that $V\subset\R^n$ is a Zariski closed subset. Hence, in view of Proposition \ref{basic_functions} \ref{l1:rest}, the proof is reduced to the case $V=\R^n$.

Let $W\subset\R^n$ be the Zariski closure of $A$ in $\R^n$. We use induction on $\dim(W)$. The case $\dim(W)=0$ is obvious since $W$ is then a finite set.

Suppose $\dim(W)>0$. By Proposition \ref{nonsingular}, $W^{\text{\upshape ns}}\subset A$ and hence
$$W=A\cup Z\text{, where } Z=W\setminus W^{\text{\upshape ns}}.$$
Let $\phi$ and $\psi$ be regular functions on $\R^n$ with $\sZ(\phi)=W$  and $\sZ(\psi)=Z.$
Consider the functions
\begin{align*}
&\func{\frac{1}{\phi^2}}{\R^n\setminus W=(\R^n\setminus A)\setminus(Z\setminus(A\cap Z))}{\R}\\
&\func{\psi\vert_{\R^n\setminus A}}{\R^n\setminus A}{\R}.
\end{align*}
Since $\sZ(\psi\vert_{\R^n\setminus A})=Z\setminus(A\cap Z)$, it follows from Proposition \ref{Lojasiewicz} that for a sufficiently large integer $N$, the function $\func{\alpha}{\R^n\setminus A}{\R}$ defined by $$\alpha=\frac{\psi^{2N}}{\phi^2}\text{ on }\R^n\setminus W\text{ and } \alpha=0\text{ on } Z\setminus(A\cap Z)$$ is continuous.

We claim that the function $\func{\beta}{\R^n\setminus(A\cap Z)}{\R}$ defined by $$\beta=\frac{\phi^{2}}{\phi^2+\psi^{2N}}\text{ on }\R^n\setminus Z\text{ and } \beta=1\text{ on } Z\setminus(A\cap Z)$$ is continuous. Indeed, since $$\R^n\setminus(A\cap Z)=(\R^n\setminus A)\cup(\R^n\setminus Z)\text{ and }(\R^n\setminus A)\cap(\R^n\setminus Z)=\R^n\setminus W,$$ it suffices to show that $\beta$ is continuous on $\R^n\setminus A$. However, on $\R^n\setminus A$ we have $\beta=\frac{1}{1+\alpha}$ which proves the claim. Therefore $\beta$ is a regulous function with $$\sZ(\beta)=W\setminus Z=A\setminus(A\cap Z).$$
Since $\dim(Z)<\dim(W)$, it follows from the induction hypothesis that there exists a regulous function $\func{\gamma}{\R^n}{\R}$ with $\sZ(\gamma)=A\cap Z$. By Proposition \ref{Lojasiewicz} once again, if $N'$ is a sufficiently large integer, the function $\func{f}{\R^n}{\R}$ defined by $$f=\gamma^{N'}\beta\text{ on }\R^n\setminus(A\cap Z)\text{ and } f=0\text{ on } A\cap Z$$ is continuous and hence regulous. By construction $\sZ(f)=A$ as required.
\end{proof}

Koll\'ar and Nowak \cite{KN} introduced hereditarily rational functions. It follows from \cite{KN}*{top of page 91} that a function defined on an affine real algebraic variety is continuous (in the Euclidean topology) and hereditarily rational if and only if it is regulous. Thus, \cite{KN}*{Proposition 8} implies the following:

\begin{prop}\label{kollar-nowak}
Let $\func{f}{V}{\R}$ be a function defined on a nonsingular affine real algebraic variety $V$. Then the following conditions are equivalent:
\begin{enumerate}[label=\upshape(\alph*)]
\item $f$ is regulous
\item $f$ is continuous (in the Euclidean topology) and there exists a Zariski open dense subset $V^0\subset V$ such that $f\vert_{V^0}$ is a regular function.
\end{enumerate}
\qed
\end{prop}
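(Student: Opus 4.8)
The plan is to split the equivalence into its two implications, treating $(b)\Rightarrow(a)$ as the substantial one and invoking the results of Koll\'{a}r and Nowak recalled just above the statement. The implication $(a)\Rightarrow(b)$ should follow directly from the definitions and the stratification machinery of Section 1, while $(b)\Rightarrow(a)$ should reduce to hereditary rationality, which is exactly where the hypothesis of nonsingularity is consumed.

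For $(a)\Rightarrow(b)$ I would argue as follows. If $f$ is regulous, then it is $\sS$-regular for some stratification $\sS$ of $V$, and by Proposition \ref{stratifications} I may assume that every stratum is an irreducible Zariski locally closed subset of $V$. Since $V$ is nonsingular, each of its local rings is a domain, so the irreducible components of $V$ are pairwise disjoint and Zariski clopen; as each stratum is irreducible, hence connected, it lies in exactly one component. Fixing an irreducible component $V_j$, its strata cover it, so by irreducibility some stratum $S_j\subset V_j$ is Zariski dense in $V_j$, and being locally closed and dense it contains a Zariski open dense subset $O_j$ of $V_j$. Because the components are Zariski open in $V$, the set $V^0:=\bigcup_j O_j$ is Zariski open and dense in $V$; on each $O_j$ the function $f$ agrees with a restriction of the regular map $f\vert_{S_j}$, so $f\vert_{V^0}$ is regular, and continuity of $f$ is built into the definition of a regulous map.

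For $(b)\Rightarrow(a)$ the natural idea is to build a stratification adapted to $f$ by descending induction on dimension: $f$ is regular on the given dense open $V^0$, and on the lower-dimensional constructible set $V\setminus V^0$ one wants $f$ to again be regular on a dense open subset of each irreducible component, so that the argument can be iterated until $V$ is exhausted. The crucial and only nontrivial point is precisely that this rationality is inherited by subvarieties, that is, that $f$ is \emph{hereditarily rational} in the sense of \cite{KN}. This is exactly where nonsingularity of $V$ is indispensable, since on singular varieties a continuous rational function need not be regulous, and it is the content of \cite{KN}*{Proposition 8}. Combining this with the equivalence recalled from \cite{KN}*{top of page 91} between continuous hereditarily rational functions and regulous functions then gives that $f$ is regulous.

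Thus the whole argument reduces to quoting \cite{KN}*{Proposition 8} together with that equivalence, the elementary direction $(a)\Rightarrow(b)$ being handled by Proposition \ref{stratifications}. The main obstacle is establishing hereditary rationality, i.e. that regularity on a dense open persists after restriction to subvarieties; this rests on resolution of singularities and is resolved in \cite{KN}, so beyond the bookkeeping of the two implications no further work is needed.
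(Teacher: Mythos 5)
Your proposal is correct and follows essentially the same route as the paper, which likewise derives the proposition from \cite{KN}*{Proposition 8} together with the identification (from \cite{KN}*{top of page 91}) of regulous functions with continuous hereditarily rational functions. Your explicit stratification argument for (a)$\implies$(b) is a sound elaboration of a step the paper leaves implicit, but it introduces no genuinely different idea.
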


Furthermore, \cite{KN}*{Theorem 10} can be stated as follows.

\begin{theorem}\label{tietze_regular}
Let $V$ be an affine real algebraic variety, $W\subset V$ a Zariski closed subvariety, and $\func{f}{W}{\R}$ a regulous function. Then there exists a regulous function $\func{F}{V}{\R}$ such that $F\vert_W=f$. \qed
\end{theorem}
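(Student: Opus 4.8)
The plan is to reduce to an extension problem on affine space and then argue by induction on $\dim W$. First I would reduce to the case $V=\R^n$: realize $V$ as a Zariski closed subset of some $\R^m$, so that $W$ becomes a Zariski closed subset of $\R^m$ as well; if $\tilde F\colon\R^m\to\R$ is a regulous extension of $f$, then $F:=\tilde F\vert_V$ is regulous by Proposition~\ref{basic_functions}\ref{l1:rest} and satisfies $F\vert_W=f$. Thus it suffices to extend $f$ from a Zariski closed $W\subset\R^n$ to all of $\R^n$. I would then induct on $d=\dim W$. When $d=0$ the set $W$ is finite, and one simply interpolates the finitely many prescribed values by a polynomial, which is regulous.

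For the inductive step, the first move is to dispose of the singular locus. Set $Z=\operatorname{Sing}(W)$, enlarged (when $W$ is reducible) by the pairwise intersections of the irreducible components, so that $Z$ is Zariski closed with $\dim Z<d$ and $W\setminus Z$ is a disjoint union of nonsingular irreducible pieces. The restriction $f\vert_Z$ is regulous by Proposition~\ref{basic_functions}\ref{l1:rest}, hence by the induction hypothesis it admits a regulous extension $G\colon\R^n\to\R$. Replacing $f$ by $f-G\vert_W$, a difference of regulous functions and hence regulous, we may assume $f\vert_Z=0$; that is, $f$ is concentrated on the nonsingular locus $W^{\text{\upshape ns}}$.

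The crux is to extend $f$ across $W^{\text{\upshape ns}}$. Since $W^{\text{\upshape ns}}$ is nonsingular, $f$ is regular on some Zariski dense open subset $W^0\subset W^{\text{\upshape ns}}$ by Proposition~\ref{kollar-nowak}, say $f=p/q$ with $p,q$ polynomials on $\R^n$ and $q$ nonvanishing on $W^0$. Thus $p/q$ is regular on $\R^n\setminus\sZ(q)$ and restricts to $f$ on $W\setminus\sZ(q)$, and the problem becomes that of extending it across $\sZ(q)$ to a continuous regulous function on $\R^n$ whose restriction to $W$ remains equal to $f$. Here I would imitate the proof of Proposition~\ref{closed=zero}: choose, via Proposition~\ref{closed=zero}, a regulous $w\colon\R^n\to\R$ with $\sZ(w)=W$, and use the {\L}ojasiewicz inequality (Proposition~\ref{Lojasiewicz}) to suppress, by sufficiently high powers of equations of $\sZ(q)$, the blow-up of $p/q$ along the part of $\sZ(q)$ lying off $W$, correcting by multiples of $w$ so that the values on $W$ are left unchanged.

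The main obstacle is precisely the behaviour of $p/q$ along the indeterminacy locus $W\cap\sZ(q)$. There the continuity of $f$ on $W$ guarantees that $p/q$ has the correct finite limit along $W$, but it gives no control over the limit taken within $\R^n$, where the rational function may genuinely fail to extend continuously. The {\L}ojasiewicz trick repairs continuity only transversally to $W$ and not at these indeterminate points, so it does not by itself close the argument. Resolving this is the heart of the matter, and the cleanest route I know is to invoke resolution of singularities in the manner of Koll{\'a}r and Nowak: pull the situation back to a nonsingular model on which $p/q$ becomes a genuine morphism, carry out the extension there, and descend. I expect this resolution step, rather than any of the preceding reductions, to be the real difficulty.
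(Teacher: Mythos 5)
There is a genuine gap, and you have in fact diagnosed it yourself: the final step --- producing a continuous extension of $p/q$ across the indeterminacy locus $W\cap\sZ(q)$ whose restriction to $W$ still equals $f$ --- is the entire content of the theorem, and your proposal does not carry it out. All of the preceding reductions (passing to $V=\R^n$, inducting on $\dim W$, absorbing the singular locus, invoking Proposition~\ref{kollar-nowak} to write $f=p/q$ on a dense open subset) are routine outer layers; the {\L}ojasiewicz device of Proposition~\ref{Lojasiewicz} can only damp a function to $0$ along a bad locus by multiplying by a high power of an equation for it, so it necessarily destroys the prescribed values of $f$ on $W\cap\sZ(q)$ rather than matching them. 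Continuity of $f$ along $W$ gives no control on the limits of $p/q$ from directions transverse to $W$, and in general $p/q$ simply does not extend continuously there. Pointing at ``resolution of singularities in the manner of Koll{\'a}r and Nowak'' without executing the pullback, the extension on the nonsingular model, and --- crucially --- the descent (which is where hereditary rationality, not mere rationality, is needed) leaves the theorem unproved.

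For context: the paper itself offers no proof of this statement. It is presented as a restatement of \cite{KN}*{Theorem 10} and closed with a \textup{qed} symbol, i.e.\ the intended justification is the citation. Koll{\'a}r and Nowak's proof of that theorem is precisely the resolution-of-singularities argument you gesture at, and it is genuinely delicate; your reductions reproduce the easy part of their argument while deferring its core. So your write-up is not wrong in any step it actually takes, but as a proof it is incomplete at exactly the point where the real mathematics lives. Either cite \cite{KN}*{Theorem 10} outright (as the paper does), or commit to working through the resolution and descent in detail.
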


It is not hard to generalize Theorem \ref{tietze_regular}.

\begin{prop}\label{Tietze}
Let $V$ be an affine real algebraic variety, $A\subset V$ a constructible closed subset, and $\func{f}{A}{\R}$ a regulous function. Then there exists a regulous function $\func{F}{V}{\R}$ such that $F\vert_A=f$
\end{prop}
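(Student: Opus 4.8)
The plan is to reduce the statement to the already-available extension result for Zariski closed subvarieties, namely Theorem \ref{tietze_regular}, by an induction on $d=\dim W$, where $W\subset V$ denotes the Zariski closure of $A$. The crucial structural input is Proposition \ref{nonsingular}: since $A$ is constructible and Euclidean closed with Zariski closure $W$, we have $W^{\mathrm{ns}}\subset A$, so that the ``missing part'' $W\setminus A$ is contained in the singular locus $S:=W\setminus W^{\mathrm{ns}}$, a Zariski closed subset with $\dim S<d$. This is precisely what makes an induction on dimension feasible. When $d=0$ (or, more generally, whenever $A$ happens to be Zariski closed in $V$) the claim is immediate from Theorem \ref{tietze_regular}.

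For the inductive step I would first deal with $A$ over the singular locus. The set $A\cap S$ is again constructible and Euclidean closed, and its Zariski closure is contained in $S$, hence has dimension $<d$; by Proposition \ref{basic_functions}\ref{l1:rest} the restriction $f\vert_{A\cap S}$ is regulous. The induction hypothesis then yields a regulous function $\func{g}{V}{\R}$ with $g\vert_{A\cap S}=f\vert_{A\cap S}$. Replacing $f$ by $h:=f-g\vert_A$, which is regulous on $A$ and vanishes on $A\cap S$, I reduce to extending a regulous function on $A$ that is zero on $A\cap S$. Now define $\func{\tilde h}{W}{\R}$ by $\tilde h=h$ on $A$ and $\tilde h=0$ on $S$; since $W=W^{\mathrm{ns}}\cup S$ with $W^{\mathrm{ns}}\subset A$, the sets $A$ and $S$ cover $W$, and the two prescriptions agree on the overlap $A\cap S$ (both equal $0$), so $\tilde h$ is well defined. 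Once $\tilde h$ is known to be regulous on $W$, Theorem \ref{tietze_regular} extends it to a regulous $\func{H}{V}{\R}$, and $F:=H+g$ is the desired extension, because $F\vert_A=\tilde h\vert_A+g\vert_A=h+g\vert_A=f$.

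The main obstacle is therefore to verify that $\tilde h$ is genuinely regulous on $W$, which splits into continuity and the existence of a suitable stratification. For continuity I would invoke the pasting lemma for the two Euclidean closed sets $A$ and $S$ of $W$: $\tilde h$ is continuous on each, and they agree on their intersection. For regulousness I would exploit the decomposition $W=W^{\mathrm{ns}}\sqcup S$ into the Zariski open set $W^{\mathrm{ns}}$ and the Zariski closed set $S$. On $S$ we have $\tilde h=0$, which is regular, while on $W^{\mathrm{ns}}\subset A$ we have $\tilde h=h\vert_{W^{\mathrm{ns}}}$, which is regulous by Proposition \ref{basic_functions}\ref{l1:rest}; choosing, as in Definition \ref{function}, a stratification of $W^{\mathrm{ns}}$ on which this restriction is regular and adjoining the single stratum $S$ produces a stratification of $W$ on which $\tilde h$ is regular. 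I expect the bookkeeping here---checking that the strata obtained inside $W^{\mathrm{ns}}$ remain Zariski locally closed in $V$ and that the pieces assemble into an honest stratification---to be the only delicate point, and all of it routine once the reduction via Proposition \ref{nonsingular} is in place.
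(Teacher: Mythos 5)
Your proposal is correct and follows essentially the same route as the paper's proof: induction on $\dim W$, Proposition \ref{nonsingular} to write $W=A\cup Z$ with $Z$ the singular locus, the induction hypothesis applied to $A\cap Z$, a pasting of two Euclidean closed pieces, and finally Theorem \ref{tietze_regular}. The only difference is cosmetic: you normalize by subtracting $g$ so as to glue $h$ with the zero function on $Z$, whereas the paper glues $f$ on $A$ directly with $G$ on $Z$; both versions require the same (routine) verification that the glued function is regulous.
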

\begin{proof}
Let $W\subset V$ be the Zariski closure of $A$. We use induction on $\dim(W)$. The case $\dim(W)=0$ is obvious since then $W$ is a finite set.

Suppose that $\dim(W)>0$. By Proposition \ref{nonsingular}, $W^{\text{\upshape ns}}\subset A$. Hence $$W=A\cup Z,$$ where $Z=W\setminus W^{\text{\upshape ns}}$. Since $\dim(Z)<\dim(W)$ the induction hypothesis implies that we can find a regulous function $\func{G}{V}{\R}$ with $G\vert_{A\cap Z}=f\vert_{A\cap Z}$. The function $\func{H}{W}{\R}$ defined by $$H=f\text{ on }A\text{ and } H=G\text{ on }Z,$$ is continuous, hence regulous. By Theorem \ref{tietze_regular} there exists a regulous function $\func{F}{V}{\R}$ with $F\vert_W=H$. The proof is complete since $F\vert_A=f$.
\end{proof}

\begin{prop}\label{Tietze2}
Let $V$ be an affine real algebraic variety and let $C\subset B\subset A\subset V$ be constructible closed subsets. For a function $\func{f}{B\setminus C}{\R}$, the following conditions are equivalent:
\begin{enumerate}[label=\upshape(\alph*)]
\item $f$ is regulous.
\item There exists a regulous function $\func{F}{A\setminus C}{\R}$ with $F\vert_{B\setminus C}=f$.
\item for every point $x\in B\setminus C$, there exist an open neighbourhood $U_x\subset A$ of $x$ (in the constructible topology) and a regulous function $\func{f_x}{U_x}{\R}$ such that $f_x=f$ on $U_x\cap(B\setminus C)$.
\end{enumerate}
\end{prop}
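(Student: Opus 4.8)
The plan is to establish the cycle of implications (a) $\Rightarrow$ (b) $\Rightarrow$ (c) $\Rightarrow$ (a), which yields all three equivalences at once. The implication (b) $\Rightarrow$ (c) is immediate: since $C\subset A$ is constructible closed, the set $A\setminus C$ is a constructible open neighbourhood in $A$ of every point of $B\setminus C$, so one may take $U_x=A\setminus C$ and $f_x=F$ for all $x$, and then $f_x=F=f$ on $U_x\cap(B\setminus C)=B\setminus C$.

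For (a) $\Rightarrow$ (b), which carries the bulk of the work, I would first reduce, via an isomorphism, to the case where $V$ is a Zariski closed subset of $\R^n$, so that the semialgebraic machinery applies. By Proposition \ref{closed=zero} there is a regulous function $\func{c}{V}{\R}$ with $\sZ(c)=C$. As $B$ is Euclidean closed and semialgebraic and $\sZ(c|_B)=B\cap C=C$, both $c|_B$ and $f$ are continuous semialgebraic functions on $B$ and on $B\setminus C=B\setminus\sZ(c|_B)$, respectively. By Proposition \ref{Lojasiewicz}, for $N$ large the function $\func{h}{B}{\R}$ given by $h=c^N f$ on $B\setminus C$ and $h=0$ on $C$ is continuous. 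I would then upgrade continuity to regulousness: using Proposition \ref{stratifications}, choose a common refinement stratification $\mathcal S$ of $B$ whose strata each lie in a single stratum of a stratification adapted to $c$, in a single stratum of a stratification witnessing the regulousness of $f$, or in $C$; on such a stratum $h$ equals either the regular function $c^N f$ or $0$, so $h|_S$ is regular, and since $h$ is globally continuous it is regulous. Now $B$ is constructible closed in $V$, so Proposition \ref{Tietze} extends $h$ to a regulous function $\func{H}{V}{\R}$. Finally I would set $F=H/c^N$ on $A\setminus C$. As $c$ is nonvanishing there, on each stratum of $A\setminus C$ on which $c$ is regular it is regular and nonzero, whence $1/c^N$ is regular; together with continuity this makes $1/c^N$ regulous on $A\setminus C$, so $F$ is regulous, and $F=c^Nf/c^N=f$ on $B\setminus C$.

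For (c) $\Rightarrow$ (a), I would work with the cover $\{U_x\cap(B\setminus C)\}_{x\in B\setminus C}$ of $B\setminus C$. Each such set is constructible, and it is Euclidean open in $B\setminus C$ because constructible open sets are Euclidean open; moreover $f$ agrees there with the regulous function $f_x$, hence is regulous on it by Proposition \ref{basic_functions} \ref{l1:rest}. Since the constructible topology is Noetherian (Proposition \ref{noeth}), the subspace $B\setminus C$ is quasi-compact, so finitely many of these sets already cover $B\setminus C$. Proposition \ref{locality}, applied with $B\setminus C$ in the role of the domain, then gives that $f$ is regulous.

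The main obstacle is the construction in (a) $\Rightarrow$ (b): because $f$ need not extend continuously across $C$, one cannot extend it directly and must instead damp it by the multiplier $c^N$, extend the damped function by Proposition \ref{Tietze}, and divide back. The two delicate points are verifying that the merely continuous function $h$ produced by Proposition \ref{Lojasiewicz} is in fact regulous (handled by the common refinement argument above) and that division by $c^N$ preserves regulousness on $A\setminus C$ (handled stratum by stratum using that $c$ is nonvanishing off $C$).
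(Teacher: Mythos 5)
Your proposal is correct and follows essentially the same route as the paper: for (a)$\Rightarrow$(b) it damps $f$ by $\phi^N$ using Propositions \ref{closed=zero} and \ref{Lojasiewicz}, extends by Proposition \ref{Tietze}, and divides back on $A\setminus C$; for (c)$\Rightarrow$(a) it uses quasi-compactness of the Noetherian constructible topology together with Propositions \ref{basic_functions} and \ref{locality}; and (b)$\Rightarrow$(c) is restriction. The only difference is that you spell out the stratification arguments showing that the damped function and the quotient are regulous, details the paper leaves implicit.
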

\begin{proof}
(a)$\implies$(b) By Proposition \ref{closed=zero}, there exists a regulous function $\func{\phi}{V}{\R}$ with $\sZ(\phi)=C$. According to Proposition \ref{Lojasiewicz}, if $N$ is a sufficiently large integer, then the function $\func{g}{B}{\R}$ defined by $$g=\phi^Nf\text{ on }B\setminus C\text{ and } g=0\text{ on }C$$ 
is continuous, hence regulous. In view of Proposition \ref{Tietze}, we can find a regulous function $\func{G}{V}{\R}$ with $G\vert_{B}=g$. Hence condition (b) holds with $F:=\frac{G}{\phi^N}$ on $A\setminus C$.

(c)$\implies$(a) The constructible topology is Noetherian, and hence $B\setminus C$ is quasi-compact. Consequently, $$B\setminus C\subset U_{x_1}\cup\ldots\cup U_{x_{r}}$$ for some points $x_1,\ldots, x_r$ in $B\setminus C$. Condition (a) holds in view of Propositions \ref{basic_functions} \ref{l1:rest} and \ref{locality}.

The proof is complete since the implication (b)$\implies$(c) follows from Proposition \ref{basic_functions}\ref{l1:rest}.
\end{proof}

Fichou, Huisman, Mangolte and Monnier \cite{FR} defined regulous functions in a different way. It follows from Propositions \ref{kollar-nowak} and \ref{Tietze2} that their definition is equivalent to the one used in this paper.

\section{Regulous varieties}

Let $V$ be an affine real algebraic variety, and $A\subset V$ a closed subset in the constructible topology. We regard $A$ as a topological space endowed with the constructible topology. For every open subset $U\subset A$, the ring $\reg^0(U)$ of regulous functions is defined. The assignment $$\reg^0_A:U\mapsto\reg^0_A(U):=\reg^0(U)$$ is a sheaf of rings on $A$, and $(A, \reg^0_A)$ is a locally ringed space.

\begin{definition}
A locally ringed space $(X,\reg^0_X)$, where $\reg^0_X$ is a sheaf of real-valued functions on $X$, is called an \textit{affine regulous variety} if it is isomorphic to $(A, \reg^0_A)$ for some $A$ as above. More generally, $(X, \reg^0_X)$ is called a \textit{regulous variety} if there exists a finite open cover $\{U_1,\ldots, U_r\}$ of $X$ such that each $(U_i, \reg^0_X\vert_{U_i})$ is an affine regulous variety.

A \textit{morphism} of regulous varieties, called a \textit{regulous map}, is a morphism as locally ringed spaces.
\end{definition}
By abuse of notation we often write simply $X$ for the regulous variety $(X,\reg^0_X)$. We call $\reg^0_X$ the \textit{structure sheaf} of $X$ or the \textit{sheaf of regulous functions} on $X$. If $U\subset X$ is an open subset, then the elements of $\reg^0_X(U)$ are called \textit{regulous functions} on $U$. The underlying topology on $X$, called the \textit{constructible topology}, is Noetherian. An open subset $U\subset X$ for which $(U, \reg^0_X\vert_U)$ is an affine regulous variety is called an \textit{affine open subset} of $X$. By definition, a regulous map $\func{f}{Y}{X}$ of regulous varieties is a continuous map such that for any open subset $U\subset X$ and any regulous function $\varphi$ in $\reg^0_X(U)$, the composite $\varphi\circ f\vert_{f^{-1}(U)}$ is a regulous function in $\reg^0_Y(f^{-1}(U))$. Regulous varieties (together with regulous maps) form a category.

\begin{prop}\label{in_coordinates}
Let $V, W$ be affine real algebraic varieties, $A\subset V$, $B\subset W$ constructible closed subsets, and $\func{f}{A}{B}$ some map. Then $f$ is a regulous map from $(A, \reg^0_A)$ into $(B, \reg^0_B)$ if and only if the composite $\func{i\circ f}{A}{W}$ is a regulous map in the sense of Definition \ref{function}, where $i:B\hookrightarrow W$ is the inclusion map.
\end{prop}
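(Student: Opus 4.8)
The plan is to verify directly the two conditions defining a morphism of the locally ringed spaces $(A,\reg^0_A)$ and $(B,\reg^0_B)$ against Definition \ref{function}, reducing everything to coordinate functions. First I would assume, without loss of generality, that $W$ is a Zariski closed subset of $\R^m$, with coordinate functions $y_1,\dots,y_m$, so that $i\circ f$ is described by the $m$ functions $(i\circ f)_j=y_j\circ f\colon A\to\R$. A preliminary observation, on which both directions rest, is that $i\circ f$ is regulous in the sense of Definition \ref{function} if and only if each coordinate $(i\circ f)_j$ is a regulous function on $A$: the forward implication is immediate from Proposition \ref{composition} (compose with the projection $\R^m\to\R$), while the reverse implication is obtained by choosing a stratification adapted to each $(i\circ f)_j$ and passing to a common refinement via Proposition \ref{stratifications}, on whose strata all the coordinates, hence $i\circ f$ itself, become regular.

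For the direction ``$f$ regulous $\Rightarrow$ $i\circ f$ regulous in the sense of Definition \ref{function}'', I would apply the defining pullback property of a morphism of regulous varieties to the functions $y_j\vert_B$. Each $y_j\vert_B$ lies in $\reg^0_B(B)$, being the restriction of a regular function (hence continuous and regular on the strata of any stratification of $B$), so the hypothesis that $f$ is a morphism gives $y_j\vert_B\circ f\in\reg^0_A(A)$; that is, every coordinate $(i\circ f)_j$ is regulous. The preliminary observation then yields that $i\circ f$ is regulous.

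For the converse, suppose $i\circ f$ is regulous in the sense of Definition \ref{function}. I must check the two parts of the definition of a regulous map of varieties. For continuity in the constructible topology, let $D\subset B$ be constructible closed; since $i$ is the inclusion, $f^{-1}(D)=(i\circ f)^{-1}(D)$, which is constructible by Proposition \ref{basic_functions}\ref{l1:im} and Euclidean closed because $i\circ f$ is Euclidean continuous and $D$ is Euclidean closed, hence it is constructible closed. For the pullback condition, take a constructible open $U\subset B$ and $\varphi\in\reg^0_B(U)$; then $(i\circ f)\vert_{f^{-1}(U)}$ is regulous by Proposition \ref{basic_functions}\ref{l1:rest} and has image in $U$, so composing it with the regulous function $\varphi$ via Proposition \ref{composition} shows that $\varphi\circ f\vert_{f^{-1}(U)}$ is regulous, i.e. lies in $\reg^0_A(f^{-1}(U))$. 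The compatibility with the maximal ideals of the stalks is automatic here, since these are sheaves of real-valued functions and the maximal ideal at a point consists precisely of the functions vanishing there.

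I expect the main obstacle to be the careful bookkeeping between the Euclidean and constructible topologies in the converse: one must ensure that all the preimages and restrictions involved remain constructible sets to which Propositions \ref{basic_functions} and \ref{composition} apply, and that ``constructible closed in the subspace $B$'' is correctly unwound to a constructible, Euclidean closed subset of the ambient $W$. Once this is handled, the argument is a routine assembly of the results already established.
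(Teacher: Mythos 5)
Your proposal is correct and follows essentially the same route as the paper: reduce to coordinate functions $f_j=(\pi_j\circ i)\circ f$, use the pullback property of a morphism for the forward direction, and assemble the converse from Propositions \ref{basic_functions} and \ref{composition}. You merely spell out the details (the common refinement of stratifications, continuity in the constructible topology, the stalk condition) that the paper's terse proof leaves implicit, and your handling of those details is sound.
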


\begin{proof}
We can regard $W$ (and hence $B$) as a subset of $\R^n$ and write $f=(f_1,\ldots, f_n)$ with $\func{f_j}{A}{\R}$. If $\func{f}{(A, \reg^0_A)}{(B, \reg^0_B)}$ is a regulous map, then the functions $f_j$ are regulous, since $f_j=(\pi_j\circ i)\circ f$ and the function $(\pi_j\circ i)$ is clearly regulous ($\pi_k$ denotes the projection from $\R^n$ onto the $k$-th coordinate). The converse implication is just an application of Propositions \ref{basic_functions} and \ref{composition}.
\end{proof}

\begin{prop}\label{open=variety}
Let $(X, \reg^0_X)$ be an affine regulous variety and let $U\subset X$ be an open subset. Then the locally ringed space $(U, \reg^0_X\vert_U)$ is an affine regulous variety.
\end{prop}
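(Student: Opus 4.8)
The plan is to realize the open subset $U$ as the graph of a regulous function in one higher dimension, thereby exhibiting it as a constructible closed subset of an affine real algebraic variety carrying the matching regulous structure. We may assume $(X,\reg^0_X)=(A,\reg^0_A)$ with $A\subset V$ a constructible closed subset of an affine real algebraic variety $V\subset\R^n$. Set $C:=A\setminus U$. Since $U$ is open, $C$ is closed in the constructible topology of $A$; as $A$ is itself constructible closed in $V$, the set $C$ is constructible closed in $V$. Hence, by Proposition \ref{closed=zero}, there is a regulous function $\func{\phi}{V}{\R}$ with $\sZ(\phi)=C$, so that $\phi$ vanishes nowhere on $U=A\setminus C$ and the reciprocal $1/\phi$ is a regulous function on $U$ (on each stratum of a suitable stratification, $\phi$ is regular and nonvanishing, and $1/\phi$ is Euclidean continuous).

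Next I would build the candidate model. Work inside $V\times\R\subset\R^{n+1}$ and consider the function $\Psi$ on $V\times\R$ given by $\Psi(x,t)=t\,\phi(x)-1$; it is regulous, being the difference of $1$ and the product of the regular coordinate $t$ with the pullback of $\phi$ along the regular projection. Put $A':=\sZ(\Psi)\cap(A\times\R)$. Because the zero set of a regulous function is constructible and Euclidean closed, and $A\times\R$ is constructible closed, the set $A'$ is a constructible closed subset of the affine real algebraic variety $V\times\R$; thus $(A',\reg^0_{A'})$ is an affine regulous variety by definition. A direct check identifies $A'$ as the graph $\{(x,1/\phi(x)):x\in U\}$: the equation $t\phi(x)=1$ forces $\phi(x)\neq0$, i.e. $x\notin C$, whence $x\in U$ and $t=1/\phi(x)$.

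It then remains to verify that the graph map $\func{g}{U}{A'}$, $g(x)=(x,1/\phi(x))$, and the projection $\func{\pi}{A'}{U}$, $\pi(x,t)=x$, are mutually inverse isomorphisms of locally ringed spaces; they are evidently set-theoretic inverses. The coordinate functions of $g$ are the regulous coordinates of the inclusion $U\hookrightarrow V$ together with $1/\phi$, so $g$ is a regulous map into $\R^{n+1}$ with image in $A'$; pulling back a regulous function $\psi$ on an open $O'\subset A'$ yields $\psi\circ g$, which is regulous by Propositions \ref{basic_functions}\ref{l1:rest} and \ref{composition}. Symmetrically, $\pi$ is the restriction of the regular projection $V\times\R\to V$, hence regulous, and the same two propositions show that precomposition with $\pi$ carries regulous functions on open subsets of $U$ to regulous functions on $A'$. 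Consequently $(U,\reg^0_X\vert_U)\cong(A',\reg^0_{A'})$ is an affine regulous variety. The one point requiring care is that $U$ is \emph{not} closed in the constructible topology, so Proposition \ref{in_coordinates} cannot be invoked for $g$ directly; instead one argues through the componentwise description of regulous maps into $\R^{n+1}$ and composes using Propositions \ref{basic_functions}\ref{l1:rest} and \ref{composition}, which hold for arbitrary constructible (not necessarily closed) sets. The genuinely creative step is the $1/\phi$ graph trick that converts the open set into a closed one; the continuity and sheaf-isomorphism checks are then routine applications of the results already established.
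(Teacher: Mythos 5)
Your proof is correct and follows essentially the same route as the paper: choose a regulous $\phi$ with $\sZ(\phi)=X\setminus U$ via Proposition \ref{closed=zero}, realize $U$ as the constructible closed graph $\sZ(t\phi(x)-1)\cap(A\times\R)$ in $V\times\R$, and check that the graph map and the projection are mutually inverse regulous isomorphisms. Your extra remark that Proposition \ref{in_coordinates} is stated only for constructible closed sets, so that the regulousness of the graph map should be checked componentwise via Propositions \ref{basic_functions}\ref{l1:rest} and \ref{composition}, is a reasonable point of care that the paper's own proof passes over silently.
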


\begin{proof}
We regard $X$ as a constructible closed subset of an affine real algebraic variety $V$. Making use of Proposition \ref{closed=zero} we get a regulous function $\func{f}{V}{\R}$ with $\sZ(f)=X\setminus U$. Clearly, $g:V\times\R\ni(x, y)\mapsto f(x)y-1\in\R$ is a regulous function, and $Y=\sZ(g)\cap(X\times\R)$ is a constructible closed subset of $V\times\R$ by Proposition \ref{basic_functions}(2). Then, by Proposition \ref{in_coordinates}, the map $$U\ni x\mapsto\left(x,\frac{1}{f(x)}\right)\in Y$$ is a regulous isomorphism, which completes the proof.
\end{proof}

\begin{prop}\label{sheaf_restriction}
Let $V$ be an affine real algebraic variety and let $B\subset A\subset V$ be constructible closed subsets. Then the restriction $\reg^0_A\vert_B$ coincides with $\reg^0_B$.
\end{prop}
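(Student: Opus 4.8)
The plan is to verify the equality of the two sheaves section by section, using the compatibility of the constructible topologies together with the extension result of Proposition~\ref{Tietze2}. First I would record that the constructible topology on $B$ inherited as a subspace of $A$ agrees with the one inherited from $V$ (a subspace of a subspace is again a subspace), so $B$ carries an unambiguous constructible topology; moreover every open subset $W\subseteq A$ is of the form $A\setminus C'$ with $C'$ constructible and closed, hence is itself a constructible subset of $V$, so that ``regulous function on $W$'' is meaningful in the sense of Definition~\ref{function} and Proposition~\ref{basic_functions} applies to it. Under the natural interpretation of restricting a sheaf of functions to the closed subspace $B$, a section of $\reg^0_A\vert_B$ over an open set $U'\subseteq B$ is a function $g\colon U'\to\R$ that is, locally on $U'$, the restriction of a regulous function defined on a constructible-open subset of $A$; I would make this reading explicit, noting that the restriction must be taken as the image inside the sheaf of continuous functions on $B$, since germs along $A$ are not determined by their values on $B$ and the naive inverse-image sheaf is strictly larger at the level of stalks.

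The core is then to show, for every open $U'\subseteq B$, that a function $g\colon U'\to\R$ is a section of $\reg^0_A\vert_B$ if and only if it lies in $\reg^0_B(U')$, that is, is regulous on $U'$. Writing $C:=B\setminus U'$, which is constructible and closed, I obtain a chain $C\subseteq B\subseteq A\subseteq V$ of constructible closed subsets with $U'=B\setminus C$, precisely the setting of Proposition~\ref{Tietze2}. The condition that $g$ be a section of $\reg^0_A\vert_B$ over $U'$ is verbatim condition (c) of that proposition (local extendability to regulous functions on constructible-open neighbourhoods in $A$), while membership in $\reg^0_B(U')$ is condition (a). The equivalence (a)$\Leftrightarrow$(c) therefore yields the desired identification at once.

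If one prefers to argue the two inclusions directly rather than invoke Proposition~\ref{Tietze2} as a black box, the easy direction is that any $g$ which is locally the restriction of some $h\in\reg^0_A(W)$ is regulous: each $W\cap U'$ is a constructible subset of $W$, so $h\vert_{W\cap U'}$ is regulous by Proposition~\ref{basic_functions}\ref{l1:rest}; these sets form a constructible-open, hence Euclidean-open, cover of $U'$, which is finite by quasi-compactness of the Noetherian constructible topology, and Proposition~\ref{locality} then gives that $g$ is regulous on $U'$. The reverse inclusion---that a regulous function on $B\setminus C$ extends, at least locally, to a regulous function on the constructible-open set $A\setminus C$---is the substantial point, and is exactly the content of the implication (a)$\Rightarrow$(c) of Proposition~\ref{Tietze2}. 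I expect this extension step to be the main obstacle: it ultimately rests on the Tietze-type extension in Theorem~\ref{tietze_regular} together with the {\L}ojasiewicz estimate of Proposition~\ref{Lojasiewicz}. Since all of these are already available, modulo Proposition~\ref{Tietze2} the argument is short.
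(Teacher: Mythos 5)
Your argument is correct and is exactly the paper's proof, which simply states that the assertion follows from Proposition~\ref{Tietze2}; you have unpacked the same equivalence (a)$\Leftrightarrow$(c) applied to $C=B\setminus U'\subseteq B\subseteq A\subseteq V$, together with the (correct) reading of the restricted sheaf as functions locally extendable to constructible-open neighbourhoods in $A$. Nothing is missing.
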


\begin{proof}
The assertion follows from Proposition \ref{Tietze2}.
\end{proof}

\begin{cor}\label{closed=variety}
Let $(X, \reg^0_X)$ be an affine regulous variety and let $Z\subset X$ be a locally closed subset. Then the locally ringed space $(Z, \reg^0_X\vert_Z)$ is an affine regulous variety.
\end{cor}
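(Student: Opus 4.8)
The plan is to treat separately the case in which $Z$ is closed and the case in which $Z$ is open, and then to combine them, exploiting that any locally closed subset of $X$ (in the constructible topology) can be written as $Z=U\cap C$ with $U$ open and $C$ closed.

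First I would settle the closed case. Suppose $Z$ is closed in $X$. By definition of an affine regulous variety, I may regard $X$ as a constructible closed subset $A$ of some affine real algebraic variety $V$, via an isomorphism of locally ringed spaces under which the structure sheaf $\reg^0_X$ corresponds to $\reg^0_A$. Under this identification $Z$ corresponds to a subset $B\subset A$ that is closed in the constructible topology. Since the constructible topology on $A$ is the one induced from $V$ and $A$ is itself constructible closed in $V$, the set $B$ is a constructible closed subset of $V$ with $B\subset A\subset V$. Proposition \ref{sheaf_restriction} then yields $\reg^0_A\vert_B=\reg^0_B$, so that $(Z,\reg^0_X\vert_Z)$ is isomorphic to $(B,\reg^0_B)$, which is an affine regulous variety by definition.

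For the general case I would write $Z=U\cap C$ with $U\subset X$ open and $C\subset X$ closed. By Proposition \ref{open=variety}, $(U,\reg^0_X\vert_U)$ is an affine regulous variety. Since $C$ is closed in $X$, the set $Z=U\cap C$ is closed in $U$. Applying the closed case already established to the affine regulous variety $(U,\reg^0_X\vert_U)$ and its closed subset $Z$, I conclude that $(Z,(\reg^0_X\vert_U)\vert_Z)$ is an affine regulous variety; and since $(\reg^0_X\vert_U)\vert_Z=\reg^0_X\vert_Z$, this completes the argument.

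The only genuine content lies in the closed case, where one must check that restricting the structure sheaf of $X$ to a closed subset again produces the regulous structure sheaf of that subset; this is exactly what Proposition \ref{sheaf_restriction} supplies. The main point requiring care is to verify that $Z$, once $X$ is realized inside the ambient affine real algebraic variety $V$, is indeed a constructible closed subset of $V$, so that the hypotheses of Proposition \ref{sheaf_restriction} are met. Everything else is formal manipulation of restrictions of sheaves.
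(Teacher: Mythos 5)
Your proof is correct and follows essentially the same route as the paper: the closed case is handled by Proposition \ref{sheaf_restriction}, and the general locally closed case is reduced to it by writing $Z=U\cap C$ and invoking Proposition \ref{open=variety}. The paper's own proof is just a one-line indication of exactly this argument, so your write-up simply supplies the details it leaves implicit.
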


\begin{proof}
If $Z$ is a closed subset, it suffices to apply Proposition \ref{sheaf_restriction}. The general case also requires Proposition \ref{open=variety}.
\end{proof}

\begin{cor}
Let $(X, \reg^0_X)$ be a regulous variety and let $Z\subset X$ be a locally closed subset. Then the locally ringed space $(Z, \reg^0_X\vert_Z)$ is a regulous variety, and the inclusion map $\func{i}{Z}{X}$ is a regulous map.
\end{cor}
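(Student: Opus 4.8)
The plan is to deduce this general (possibly non-affine) statement from the affine case treated in Corollary~\ref{closed=variety}, by passing to an open cover of $X$ consisting of affine regulous varieties. By the definition of a regulous variety, fix a finite open cover $\{U_1,\ldots,U_r\}$ of $X$ such that each $(U_i,\reg^0_X\vert_{U_i})$ is an affine regulous variety, and set $Z_i:=Z\cap U_i$. Then $\{Z_1,\ldots,Z_r\}$ is a finite open cover of $Z$ in the induced constructible topology, so once I know that each $(Z_i,\reg^0_X\vert_{Z_i})$ is an affine regulous variety, the space $(Z,\reg^0_X\vert_Z)$ will be a regulous variety by definition.

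The first step is to verify that $Z_i$ is a locally closed subset of $U_i$. Writing $Z=O\cap C$ with $O\subset X$ open and $C\subset X$ closed, I get $Z_i=(O\cap U_i)\cap(C\cap U_i)$, an intersection of an open and a closed subset of $U_i$, hence locally closed in $U_i$. Corollary~\ref{closed=variety}, applied to the affine regulous variety $(U_i,\reg^0_X\vert_{U_i})$ and its locally closed subset $Z_i$, then shows that $(Z_i,(\reg^0_X\vert_{U_i})\vert_{Z_i})$ is an affine regulous variety. The only bookkeeping point is to identify the structure sheaf: by transitivity of restriction of a sheaf along the inclusions $Z_i\subset U_i\subset X$ one has $(\reg^0_X\vert_{U_i})\vert_{Z_i}=\reg^0_X\vert_{Z_i}$, so each $(Z_i,\reg^0_X\vert_{Z_i})$ is indeed affine regulous and $(Z,\reg^0_X\vert_Z)$ is a regulous variety.

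It remains to check that the inclusion $\func{i}{Z}{X}$ is a regulous map. Continuity is immediate, as $Z$ carries the induced topology. For the defining pullback condition, take any open $U\subset X$ and any $\varphi\in\reg^0_X(U)$; then $i^{-1}(U)=Z\cap U$ and $\varphi\circ i\vert_{Z\cap U}$ is exactly the image of $\varphi$ under the sheaf restriction map $\reg^0_X(U)\to(\reg^0_X\vert_Z)(Z\cap U)=\reg^0_Z(Z\cap U)$, hence a regulous function on $Z\cap U$. Thus $i$ is regulous. I expect no real difficulty here: the substance of the argument is concentrated in Corollary~\ref{closed=variety}, and the main thing to watch is the identification of structure sheaves under iterated restriction, which is what keeps the reduction to the affine case honest.
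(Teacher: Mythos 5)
Your proposal is correct and follows the same route as the paper: the paper's proof simply states that the first assertion follows from Corollary~\ref{closed=variety} and that the second is obvious, leaving implicit exactly the reduction to an affine open cover and the sheaf-restriction bookkeeping that you spell out. Your write-up just makes those elided details explicit; there is no substantive difference.
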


\begin{proof}
The first assertion is a consequence of Corollary \ref{closed=variety}, and the second assertion is obvious.
\end{proof}

\begin{definition}
Let $(X, \reg^0_X)$ be a regulous variety. If $Z\subset X$ is a closed (resp. locally closed) subset, then $(Z, \reg^0_X\vert_Z)$ is called a \textit{regulous subvariety} (resp. a \textit{locally closed regulous subvariety}) of $(X, \reg^0_X)$. We will usually say simply that $Z$ is a regulous subvariety (resp. a locally closed regulous subvariety) of X.
\end{definition}

We now describe a variant of the gluing construction.

\begin{prop}\label{gluing}
Let $\{X_i\}_{i\in I}$ be a finite collection of subsets of a set $X$. Assume that each $X_i$ is endowed with a structure of regulous variety $(X_i, \reg^0_{X_i})$ so that the following conditions hold:
\begin{enumerate}
\item $X_i\cap X_j$ is open in $X_i$ for all $i,j\in I$.
\item $\reg^0_{X_i}\vert_{X_i\cap X_j}=\reg^0_{X_j}\vert_{X_i\cap X_j}$ for all $i, j\in I$
\end{enumerate}
Then there exists exactly one structure of regulous variety $(X, \reg^0_X)$ on $X$ such that each $X_i$ is an open subset of $X$ and $\reg^0_X\vert_{X_i}=\reg^0_{X_i}$
\end{prop}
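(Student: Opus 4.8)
The plan is to carry out the standard gluing of locally ringed spaces, taking care that the glued object actually satisfies the definition of a regulous variety, namely that it admits a \emph{finite} open cover by affine regulous varieties. Throughout I take the $X_i$ to cover $X$, that is $X=\bigcup_{i\in I}X_i$; this is forced by the conclusion, since a point lying in no $X_i$ could not belong to any affine open subset. I also read condition (2) as asserting that the two regulous-variety structures induced on the overlap $X_i\cap X_j$, one from $X_i$ and one from $X_j$, coincide (topology and sheaf alike).

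First I would fix the topology on $X$: declare $U\subset X$ to be open precisely when $U\cap X_i$ is open in $X_i$ for every $i\in I$. A routine check shows this is a topology. Condition (1), applied to the pair $(j,i)$, gives that $X_i\cap X_j$ is open in $X_j$ for every $j$, so each $X_i$ is open in $X$. The one point requiring condition (2) is that the subspace topology induced by $X$ on $X_i$ agrees with the original topology of $X_i$: for $W$ open in $X_i$ one has $W\cap X_j=W\cap(X_i\cap X_j)$, and this is open in $X_j$ because the two structures on $X_i\cap X_j$ coincide, in particular their topologies do. Hence $\{X_i\}$ is a finite open cover of $X$ by its prescribed topological pieces.

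Next I would glue the structure sheaves. As the $\reg^0_{X_i}$ are sheaves of honest real-valued functions, the gluing is concrete: for $U\subset X$ open, set $\reg^0_X(U)$ to be the set of functions $f\colon U\to\R$ with $f|_{U\cap X_i}\in\reg^0_{X_i}(U\cap X_i)$ for every $i$. Condition (2) guarantees these local descriptions are mutually compatible on overlaps, so $U\mapsto\reg^0_X(U)$ is a sheaf of rings with $\reg^0_X|_{X_i}=\reg^0_{X_i}$. For $x\in X$ pick $i$ with $x\in X_i$; since $X_i$ is open, the stalk $\reg^0_{X,x}$ equals $\reg^0_{X_i,x}$, which is local, so $(X,\reg^0_X)$ is a locally ringed space. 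To see it is a regulous variety, note each $X_i$ admits a finite cover by affine open subsets $U_{i,k}$; each $U_{i,k}$ is open in $X$ and $\reg^0_X|_{U_{i,k}}=\reg^0_{X_i}|_{U_{i,k}}$, so $(U_{i,k},\reg^0_X|_{U_{i,k}})$ is an affine regulous variety. The finite collection $\{U_{i,k}\}$ is then the required affine open cover of $X$.

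Finally I would establish uniqueness. If $(X,\reg^0_X)$ is any regulous-variety structure in which every $X_i$ is open and carries $\reg^0_{X_i}$, then because the $X_i$ are open and cover $X$, a subset $U$ is open if and only if each $U\cap X_i$ is open in $X_i$; this forces exactly the topology constructed above. The sheaf is then forced as well: by the sheaf axiom a section over an open $U$ is determined by its restrictions to the cover $\{U\cap X_i\}$, each of which must lie in $\reg^0_{X_i}(U\cap X_i)$, recovering the sheaf defined above. I expect the only genuinely delicate point to be the compatibility of the overlap topologies in the first step; once condition (2) is understood to include equality of the induced topologies on $X_i\cap X_j$, the remainder is the formal bookkeeping of sheaf gluing.
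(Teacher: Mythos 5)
Your proposal is correct and is precisely the standard gluing-of-locally-ringed-spaces argument that the paper dismisses with ``The argument is straightforward''; you have simply supplied the details the authors omit, including the genuinely necessary reading of condition (2) as an identification of the induced topologies on the overlaps as well as of the sheaves. Nothing in your writeup deviates from or improves upon the intended route, and the one subtle point you flag (compatibility of overlap topologies) is handled correctly.
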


\begin{proof}
The argument is straightforward.
\end{proof}

To define the product of regulous varieties, we proceed as in the case of algebraic varieties.

Let $V, W$ be affine real algebraic varieties, and $A\subset V$, $B\subset W$ constructible closed subsets. Then $A\times B$ is a constructible closed subset of the affine real algebraic variety $V\times W$. We declare $(A\times B, \reg^0_{A\times B})$ to be the product of $(A, \reg^0_A)$ and $(B, \reg^0_B)$. Given affine regulous varieties $(X, \reg^0_X)$, $(Y, \reg^0_Y)$ and regulous isomorphisms $\func{f}{(X, \reg^0_X)}{(A, \reg^0_A)}$, $\func{g}{(Y, \reg^0_Y)}{(B, \reg^0_B)}$, the product $(X\times Y, \reg^0_{X\times Y})$ is, by definition, an affine regulous variety so that the bijection $\func{f\times g}{X\times Y}{A\times B}$ is a regulous isomorphism. This construction does not depend on the choice of isomorphisms $f$ and $g$.

Suppose now that $(X, \reg^0_X)$, $(Y, \reg^0_Y)$ are regulous varieties. Let $\{X_i\}_{i\in I}$ and $\{Y_j\}_{j\in J}$ be finite open affine covers of $X$ and $Y$, respectively. The products $(X_i\times Y_j, \reg^0_{X_i\times Y_j})$ are already well-defined affine regulous varieties. By applying Proposition \ref{gluing} we obtain a regulous variety $(X\times Y, \reg^0_{X\times Y})$. This construction does not depend on the choice of covers $\{X_i\}_{i\in I}$, $\{Y_j\}_{j\in J}$.

\begin{definition}
The regulous variety $(X\times Y, \reg^0_{X\times Y})$ constructed above is called the \textit{product} of $(X, \reg^0_X)$ and $(Y, \reg^0_Y)$.
\end{definition}

One readily checks that the canonical projections $$\func{\pi_X}{X\times Y}{X}\text{, }\func{\pi_Y}{X\times Y}{Y}$$ are regulous maps. Furthermore, for any regulous variety $Z$, a map $\func{f}{Z}{X\times Y}$ is regulous if and only if the maps $\pi_X\circ f$, $\pi_Y\circ f$ are regulous.

Any regulous variety can be also endowed with the Euclidean topology, which is induced by the standard metric on $\R$.

\begin{prop}\label{regulous=continuous} Any regulous map $\func{f}{X}{Y}$ of regulous varieties is continuous in the Euclidean topology. \end{prop}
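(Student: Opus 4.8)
The plan is to reduce the statement to the affine case by a standard localization argument and then invoke Proposition \ref{in_coordinates}, where Euclidean continuity is already built into Definition \ref{function}. The crucial preliminary observation is that every constructible open subset is Euclidean open: by definition the constructible closed sets are precisely the Euclidean closed constructible sets, so their complements are Euclidean open. Consequently, any open cover of a regulous variety in the constructible topology is also a cover in the Euclidean topology, and continuity in the Euclidean topology may be verified locally on the members of such a cover.

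Concretely, I would first choose a finite open affine cover $\{Y_j\}_{j\in J}$ of $Y$ and a finite open affine cover $\{X_i\}_{i\in I}$ of $X$, all in the constructible topology. Since $f$ is continuous in the constructible topology and each $Y_j$ is constructible open, each $f^{-1}(Y_j)$ is constructible open in $X$, and hence so is each intersection $X_{ij}:=X_i\cap f^{-1}(Y_j)$. By Proposition \ref{open=variety}, every $X_{ij}$ is an affine regulous variety, and the restriction $\func{f\vert_{X_{ij}}}{X_{ij}}{Y_j}$ is again a regulous map of affine regulous varieties (morphisms of locally ringed spaces restrict to open subsets). The sets $\{X_{ij}\}_{(i,j)\in I\times J}$ form a finite constructible open, hence Euclidean open, cover of $X$, so it suffices to prove that each $f\vert_{X_{ij}}$ is continuous in the Euclidean topology.

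For the affine case, I would realize $Y_j$ as a constructible closed subset $B\subset W\subset\R^n$ of an affine real algebraic variety. Applying Proposition \ref{in_coordinates} to the regulous map $\func{f\vert_{X_{ij}}}{X_{ij}}{B}$, the composite $\func{\iota\circ f\vert_{X_{ij}}}{X_{ij}}{W}$ with the inclusion $\iota\colon B\hookrightarrow W$ is a regulous map in the sense of Definition \ref{function}. That definition requires such a map to be continuous in the Euclidean topology; since $B$ carries the subspace Euclidean topology inherited from $W$, it follows that $f\vert_{X_{ij}}$ is Euclidean continuous. Gluing over the cover completes the argument.

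I do not expect a serious obstacle here: the entire difficulty is concentrated in the bookkeeping of the two topologies, and the only genuinely essential input is the remark that constructible open sets are Euclidean open, which legitimizes passing from a constructible open cover to a Euclidean open cover. Once that is in hand, the reduction to the affine situation and the appeal to Proposition \ref{in_coordinates} are routine, and the fact that Definition \ref{function} already demands Euclidean continuity makes the affine case immediate.
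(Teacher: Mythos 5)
Your proposal is correct and follows essentially the same route as the paper: the paper's proof likewise reduces to restrictions $f\vert_{X_0}\colon X_0\to Y_0$ between affine open subsets with $f(X_0)\subset Y_0$ and then invokes Proposition \ref{in_coordinates}, whose target notion of regulous map (Definition \ref{function}) has Euclidean continuity built in. Your additional remark that constructible open sets are Euclidean open, which justifies checking continuity locally on such a cover, is exactly the implicit bookkeeping the paper leaves to the reader.
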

\begin{proof}
It suffices to prove that for any affine open subsets $X_0\subset X$, $Y_0\subset Y$ with $f(X_0)\subset Y_0$, the restriction $\func{f\vert_{X_0}}{X_0}{Y_0}$ is a continuous map in the Euclidean topology. This, however, follows from Proposition \ref{in_coordinates}.
\end{proof}

\section{Regulous vector bundles}

Let $\F$ stand for $\R$, $\C$, or $\Kw$ (the quaternions). We will consider only left $\F$-vector spaces which plays a role if $\F=\Kw$ since the quaternions are noncommutative. For consistency, we will consider only left modules over an arbitrary ring.

The $\F$-vector space $\F^n$ is endowed with the standard inner product $$\func{\langle -,-\rangle}{\F^n\times\F^n}{\F}$$ given by $$\langle (x_1,\ldots, x_n), (y_1,\ldots, y_n)\rangle=\sum_{i=1}^{n}x_i\bar{y}_i$$ where $\bar{y}_i$ stands for the conjugate of $y_i$ in $\F$.

\subsection{Matrices and Grassmannians}\label{4A}
Let $\Mat_{m,n}(\F)$, or simply $\Mat_n(\F)$ if $m=n$, denote the $\F$-vector space of all $m$-by-$n$ matrices with entries in $\F$. For any matrix $A=[a_{ij}]\in\Mat_{m,n}(\F)$, the corresponding $\F$-linear transformation $\func{L_A}{\F^n}{\F^n}$ is given by $$(x_1,\ldots,x_n)\mapsto (y_1,\ldots, y_m),\text{ where } y_i=\sum_{j=1}^nx_ja_{ij}\text{ for }i=1,\ldots,m.$$
By abuse of notation we will often write $A(v)$ instead of $L_A(v)$, for $v\in\F^n$. If $B=[b_{jk}]\in\Mat_{n,l}(\F)$, then we define the product $AB=[c_{ik]}$ by $$c_{ik}=\sum_{j=1}^nb_{jk}a_{ij}.$$ This convention implies that $L_{AB}=L_A\circ L_B$.

When convenient, $\F$ will be identified with $\R^{d(\F)}$, where ${d(\F)}=\dim_\R\F$. In particular, we can regard any finite-dimensional $\F$-vector space as a real algebraic variety. The general linear group $\GL_n(\F)$ is a Zariski open subset of the affine real algebraic variety $\Mat_n(\F)$. Furthermore, $\GL_n(\F)$ is a real algebraic group. For $\F=\R$ or $\F=\C$, one justifies these assertions concerning $\GL_n(\F)$ by using determinants. For $\F=\Kw$, one can first embed $\Mat_n(\Kw)$ in $\Mat_{2n}(\C)$ or in $\Mat_{4n}(\R)$ and then apply determinants, cf. for example \cite{QM}.

For any integers $r$ and $n$, with $0\leq r\leq n$, we regard the space $(\F^n)^r$ of all $r$-tuples of vectors in $\F^n$ as an affine real algebraic variety. The Stiefel space $\mathbb{V}_r(\F^n)$ of all $r$-frames in $\F^n$ is a Zariski open subset of $(\F^n)^r$. We also regard the Grassmann space $\G$ of $r$-dimensional $\F$-vector subspaces of $\F^n$ as a real algebraic variety. The canonical projection $$\mathbb{V}_r(\F^n)\ni (v_1,\ldots,v_r)\mapsto\sum_{i=1}^r\F v_i\in\G$$ is a regular map, as is immediately seen by using the standard charts on $\G$. Actually, $\G$ is an affine real algebraic variety and can be identified with $$\mathbb{M}_r(\F^n)=\{A\in\Mat_n(\F) : A^2=A=A^*, \operatorname{trace}(A)=r\},$$ where $A^*$ stands for the conjugate transpose of $A$. The identifcation is effected via the one-to-one correspondence  $$\mathbb{M}_r(\F^n)\ni A\mapsto A(\F^n)\in\G.$$ The treatment of Grassmannians along these lines is contained in \cite{BCR}*{pp. 71-73, 352}.

The disjoint union $$\mathbb{G}(\F^n)=\coprod_{r=0}^n \G$$ is an affine real algebraic variety as well.

\subsection{Vector bundles in real algebraic geometry}\label{4B}

We now recall some well-known facts on $\F$-vector bundles in real algebraic geometry. A detailed presentation is given in \cite{BCR}*{Chapter 12, Section 13.3}.

Let $V$ be an affine real algebraic variety. For any Zariski open subset $U\subset V$, denote by $\reg(U, \F)$ the ring of all regular functions from $U$ into $\F$. The assignment $$\reg_V(\F)\colon U\mapsto\reg_V(\F)(U):=\reg(U,\F)$$ is a sheaf of rings of $\F$-valued functions on $V$. In particular, $\reg_V(\R)$ coincides with the structure sheaf $\reg_V$ of $V$.

For any nonnegative integer $n$, let $$\varepsilon^n_V(\F)=(V\times\F^n,p ,V)$$ denote the product $\F$-vector bundle of rank $n$ on $V$, where $V\times\F^n$ is regarded as a real algebraic variety and $\func{p}{V\times\F^n}{V}$ is the canonical projection. More generally, one considers \textit{pre-algebraic} $\F$\textit{-vector bundles} and \textit{algebraic} $\F$\textit{-vector bundles} on $V$. The category of pre-algebraic $\F$-vector bundles on $V$ is equivalent to the category of locally free $\reg_V(\F)$-sheaves of finite rank on V. A pre-algebraic $\F$-vector bundle on $V$ that is isomorphic (in the category of pre-algebraic $\F$-vector bundles on $V$) to a subbundle of $\varepsilon^n_V(\F)$ is called an algebraic $\F$-vector bundle. In \cite{BCR}*{Example 12.1.5}, there is an example of a pre-algebraic $\R$-line bundle on $\R^2$ that is not algebraic. The category of algebraic $\F$-vector bundles on $V$ is equivalent to the category of finitely generated projective $\reg(V, \F)$-modules.

The tautological $\F$-vector bundle $\gamma_r(\F^n)$ on $\G$ and its orthogonal complement $\gamma^\perp_r(\F^n)$ (with respect to the standard inner product on $\F^n$) are algebraic vector subbundles of $\varepsilon^n_{\G}(\F)$. We denote by $\gamma(\F^n)$ (resp. $\gamma^\perp(\F^n)$) the algebraic $\F$-vector subbundle of $\varepsilon^n_{\mathbb{G}(\F^n)}(\F)$ whose restriction to $\G$ is $\gamma_r(\F^n)$ (resp. $\gamma^\perp_r(\F^n)$).

For a pre-algebraic $\F$-vector bundle $\xi$ on $V$, the following conditions are equivalent:
\begin{enumerate}[label={\upshape (\alph*)}]
\item $\xi$ is an algebraic $\F$-vector bundle.
\item For some $n$, there exists a regular map $\func{f}{V}{\Gg}$ such that the pullback $f^{*}\gamma(\F^n)$ is algebraically isomorphic to $\xi$.
\item For some $n$, there exists a surjective algebraic morphism $\varepsilon^n_V(\F)\rightarrow\xi$.
\item $\xi$ is generated by global algebraic sections.
\item There exists a pre-algebraic $\F$-vector bundle $\eta$ on $V$ such that $\xi\oplus\eta$ is algebraically isomorphic to $\varepsilon_V^n(\F)$ for some $n$.
\item The total space of $\xi$ is an affine real algebraic variety.
\end{enumerate}
The equivalence of conditions (a)-(e) is established in \cite{BCR}*{Sections 12.1, 12.6, 13.3}. It is proven in \cites{HUI, HUI2,MR} that these conditions are equivalent to (f).

\subsection{Vector bundles in regulous geometry}\label{4C}

Throughout the rest of this section we adopt the following
\begin{conv} 
Unless explicitly stated otherwise, we will use constructible topology on regulous varieties. For any regulous variety $Y$ and any nonnegative integer $n$, we will regard $Y\times\F^n$ as a regulous variety. Henceforth, $X$ will denote an affine regulous variety.
\end{conv}

We now introduce our main object of investigation.

\begin{definition}
A \textit{regulous $\F$-vector bundle} on $X$ is a triple $\xi=(E, p, X)$, where $E$ is a regulous variety (not necessarily affine), $\func{p}{E}{X}$ is a regulous map, and the fiber $E_x=p^{-1}(x)$ is an $\F$-vector space for every point $x\in X$. Furthermore, the following local triviality condition holds: For each point $x\in X$, there exist an open neighborhood $U\subset X$ of $x$, an integer $n\geq 0$ (depending on $x$), and a regulous isomorphism $\func{\varphi}{p^{-1}(U)}{U\times\F^n}$ such that $\varphi(E_y)=\{y\}\times\F^n$ and the restriction $\func{\varphi_y}{E_y}{\{y\}\times\F^n}$ of $\varphi$ is an $\F$-linear isomorphism for every $y\in U$.
\end{definition}

We call $E$ (resp. $p$) the \textit{total space} (resp. the \textit{projection}) of the vector bundle $\xi$. Sometimes it will be convenient to use the notation $E(\xi)=E$ and $p(\xi)=p$. We call $(U, \varphi)$ a \textit{vector bundle chart} for $\xi$. Since $X$ is a quasi-compact topological space, there exists a finite collection $\{(U_i,\varphi_i)\}_{i\in I}$ of vector bundle charts for $\xi$ such that the $U_i$ cover $X$. If $Z\subset X$ is a locally closed regulous subvariety, then the restriction $\xi\vert_{Z}=(p^{-1}(Z),p\vert_{p^{-1}(Z)},Z)$ is a regulous $\F$-vector bundle on $Z$. The \textit{rank} of $\xi$ is the function
$$\func{\operatorname{rank}\xi}{X}{\mathbb{Z}}\text{ defined by }x\mapsto\dim_\F(E_x).$$
If $(\operatorname{rank} \xi)(x)=r$ for every $x\in X$, then $\xi$ is said to be of constant rank $r$. Since $X$ is a Noetherian topological space, it has finitely many connected components. The restriction of $\xi$ to any connected component of $X$ is a regulous $\F$-vector bundle of constant rank. Hence $\operatorname{rank} \xi$ is bounded by some nonnegative integer $r_0$, that is, $(\operatorname{rank} \xi)(x)\leq r_0$ for all $x\in X$.

\begin{definition}
A \textit{regulous morphism} (or a \textit{morphism} for short) $\func{h}{\xi}{\eta}$ of regulous $\F$-vector bundles $\xi$ and $\eta$ is a regulous map $\func{h}{E(\xi)}{E(\eta)}$ such that $h(E(\xi)_x)\subset E(\eta)_x$ and the restriction $\func{h_x}{E(\xi)_x}{E(\eta)_x}$ of $h$ is an $\F$-linear transformation for every point $x\in X$.
\end{definition}

Regulous $\F$-vector bundles on $X$ (together with regulous morphisms) form a category.

The product $\F$-vector bundle  $\varepsilon^n_X(\F)=(X\times\F^n,p ,X)$ will be regarded as a regulous $\F$-vector bundle. A regulous $\F$-vector bundle $\xi$ on $X$ is said to be \textit{trivial} if it is isomorphic to $\varepsilon^n_X(\F)$ for some $n$. A vector bundle chart $p(\xi)^{-1}(U)\rightarrow U\times\F^n$ for $\xi$, where $U\subset X$ is an open subset, is a regulous isomorphism between the restriction $\xi\vert_U$ and $\varepsilon_U^n(\F)$.

Morphisms of regulous $\F$-vector bundles have the expected local description. Namely, any regulous morphism $\func{h}{\varepsilon_X^n(\F)}{\varepsilon_X^m(\F)}$ is of the form $$h(x,v)=(x,g(x)(v))\text{ for }(x,v)\in X\times\F^n,$$ where $\func{g}{X}{\Mat_{m,n}(\F)}$ is a regulous map.

\begin{prop}\label{bijective}
Let $\func{h}{\xi}{\eta}$ be a morphism of regulous $\F$-vector bundles on $X$. If the map $h$ is bijective then it is an isomorphism.
\end{prop}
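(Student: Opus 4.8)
The plan is to show that the inverse set-map $h^{-1}\colon E(\eta)\to E(\xi)$ is itself a regulous morphism. Since $h$ is a bijective morphism that is fiberwise $\F$-linear, each $h_x\colon E(\xi)_x\to E(\eta)_x$ is a bijective $\F$-linear map, hence an $\F$-linear isomorphism; consequently $h^{-1}$ is automatically fiber-preserving and $\F$-linear on each fiber. Thus the only substantive point is to verify that $h^{-1}$ is a regulous map, and since regulousness is a local condition on the base, I would first cover $X$ by finitely many open sets $U$ that simultaneously trivialize both $\xi$ and $\eta$. Such a $U$ is obtained by intersecting a vector bundle chart for $\xi$ with one for $\eta$ around a given point; this intersection is an open subset of the affine regulous variety $X$, hence again an affine regulous variety by Proposition \ref{open=variety}.

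Over such a $U$, the hypothesis that $h_x$ is an isomorphism forces $\operatorname{rank}\xi=\operatorname{rank}\eta$ on $U$; calling this common value $n$, both $\xi\vert_U$ and $\eta\vert_U$ are trivialized by $\varepsilon^n_U(\F)$. Composing $h\vert_{p(\xi)^{-1}(U)}$ with the two trivializations yields, by the local description of morphisms between trivial bundles recalled above, a regulous map $g\colon U\to\Mat_n(\F)$ with the morphism given by $(x,v)\mapsto(x,g(x)(v))$. Bijectivity of $h_x$ means precisely that $g(x)\in\GL_n(\F)$ for every $x\in U$, and the corresponding local formula for $h^{-1}$ is $(x,w)\mapsto(x,g(x)^{-1}(w))$.

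The crux is then that the assignment $x\mapsto g(x)^{-1}$ is regulous. This follows from the single algebraic input that matrix inversion $\GL_n(\F)\to\GL_n(\F)$, $A\mapsto A^{-1}$, is a regular map: as recalled in Subsection \ref{4A}, $\GL_n(\F)$ is a real algebraic group, so the entries of $A^{-1}$ are regular functions on $\GL_n(\F)$ (adjugate over determinant for $\F=\R,\C$, and via the embedding $\Mat_n(\Kw)\hookrightarrow\Mat_{2n}(\C)$ for $\F=\Kw$). Composing the regulous map $g$ with this regular, hence regulous, inversion map and invoking Proposition \ref{composition} shows that $x\mapsto g(x)^{-1}$ is regulous, so $h^{-1}$ is regulous on each $p(\eta)^{-1}(U)$.

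Finally I would patch the local conclusions together: the sets $p(\eta)^{-1}(U)$ form a finite open cover of $E(\eta)$ on which $h^{-1}$ is regulous, so by the locality of the regulous condition (Proposition \ref{locality}, applied on the affine regulous charts) $h^{-1}$ is regulous on all of $E(\eta)$. Hence $h$ is an isomorphism of regulous $\F$-vector bundles. I expect the only genuine obstacle to be the regularity of inversion, handled uniformly over $\F=\R,\C,\Kw$; the constant-rank reduction, the passage through charts, and the gluing via locality are routine.
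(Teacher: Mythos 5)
Your proposal is correct and follows essentially the same route as the paper's proof: reduce to the case of trivial bundles over a common trivializing cover, write $h$ locally as $(x,v)\mapsto(x,g(x)(v))$ for a regulous map $g\colon U\to\GL_n(\F)$, and conclude that $h^{-1}(x,w)=(x,g(x)^{-1}(w))$ is regulous because matrix inversion is regular on $\GL_n(\F)$. The paper's proof is simply a terser version of the same argument, leaving the localization and the regularity of inversion implicit.
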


\begin{proof}
Our goal is to prove that the inverse map $\func{h^{-1}}{E(\xi)}{E(\eta)}$ is regulous. It suffices to consider $\xi=\eta=\varepsilon_X^n(\F)$. Then $$h(x,v)=(x,g(x)(v))\text{ for }(x,v)\in X\times\F^n,$$ where $\func{g}{X}{\GL_{n}(\F)}$ is a regulous map. Since $$h^{-1}(x,w)=(x,g(x)^{-1}(w))\text{ for }(x,w)\in X\times\F^m,$$ the map $h^{-1}$ is regulous, as required.
\end{proof}

In topology and complex algebraic geometry it is sometimes convenient to represent vector bundles by transition functions. This can be also done in regulous geometry.

\begin{definition}\label{transition_functions}
Let $\xi$ be a constant rank $n$, regulous $\F$-vector bundle on $X$. Let $\{(U_i,\varphi_i)\}_{i\in I}$ be a finite collection of vector bundle charts for $\xi$ such that the sets $U_i$ cover $X$. For $i,j\in I$, we have $$(\varphi_i\circ\varphi_j^{-1})(x,u)=(x, g_{ij}(x)(u))\text{ for }(x,u)\in(U_i\cap U_j)\times\F^n,$$ where $\func{g_{ij}}{U_i\cap U_j}{\GL_n(\F)}$ is a regulous map. The maps $\{g_{ij}\}_{i,j\in I}$ are called the \textit{transition functions for $\xi$ corresponding to}  $\{(U_i,\varphi_i)\}_{i\in I}$. They satisfy the following conditions:
\begin{enumerate}
\item $g_{ij}g_{jk}=g_{ik}$ on $U_i\cap U_j\cap U_k$ for all $i$, $j$, $k$ in $I$ (the cocycle condition),
\item $g_{ij}^{-1}=g_{ji}$ for all $i$, $j$ in $I$,
\item $g_{ii}=I_n$ (the identity matrix).
\end{enumerate}
\end{definition}

\begin{prop}\label{transition}
Let $\{U_i\}_{i\in I}$ be a finite open cover of $X$. Assume that there are given regulous maps  $\func{g_{ij} }{U_i\cap U_j}{\GL_n(\F)}$ satisfying all three conditions of Definition \ref{transition_functions}. Then there exists a constant rank $n$ regulous $\F$-vector bundle $\eta=(E, p, X)$ on $X$ with transition functions $\{g_{ij}\}_{i,j\in I}$ corresponding to some vector bundle charts $\{(U_i,\psi_i)\}_{i\in I}$. Such a vector bundle $\eta$ is uniquely determined up to regulous isomorphism.
\end{prop}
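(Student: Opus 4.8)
The plan is to realize $\eta$ by the standard clutching construction, using Proposition~\ref{gluing} to supply the regulous structure on the total space. First I would form the disjoint union $\coprod_{i\in I}(U_i\times\F^n)$ and introduce the relation identifying $(x,v)\in U_j\times\F^n$ with $(x,g_{ij}(x)(v))\in U_i\times\F^n$ whenever $x\in U_i\cap U_j$. Conditions (3), (2) and (1) of Definition~\ref{transition_functions} are exactly reflexivity ($g_{ii}=I_n$), symmetry ($g_{ji}=g_{ij}^{-1}$) and transitivity ($g_{ik}=g_{ij}g_{jk}$), so this is an equivalence relation; let $E$ be the quotient set and $\func{p}{E}{X}$ the map sending the class of $(x,v)$ to $x$. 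Writing $\func{q_i}{U_i\times\F^n}{E}$ for the canonical map, each $q_i$ is injective (again because $g_{ii}=I_n$), and the sets $X_i:=q_i(U_i\times\F^n)=p^{-1}(U_i)$ cover $E$.

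Next I would put a regulous structure on $E$ via Proposition~\ref{gluing}. By Proposition~\ref{open=variety} each $U_i$ is an affine regulous variety, hence $U_i\times\F^n$ is a regulous variety, and I transport its structure through the bijection $q_i$ to make $X_i$ a regulous variety. One then checks the two hypotheses of Proposition~\ref{gluing}: the set $X_i\cap X_j=p^{-1}(U_i\cap U_j)$ corresponds under $q_i$ to $(U_i\cap U_j)\times\F^n$, which is open in $U_i\times\F^n$ since $U_i\cap U_j$ is open in $U_i$; and the change-of-chart map $q_i^{-1}\circ q_j$ on $(U_i\cap U_j)\times\F^n$ is given by $(x,v)\mapsto(x,g_{ij}(x)(v))$, a regulous isomorphism because $g_{ij}$ is regulous with regulous inverse $g_{ji}$. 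Hence the two induced regulous structures on $X_i\cap X_j$ agree, and Proposition~\ref{gluing} produces a regulous variety $E$ in which each $X_i$ is open with $\reg^0_E\vert_{X_i}=\reg^0_{X_i}$.

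It then remains to verify that $\eta=(E,p,X)$ is a regulous $\F$-vector bundle carrying the required data. The fibre $E_x$ acquires an $\F$-vector space structure by transport through any $\psi_i:=q_i^{-1}$ with $x\in U_i$, and this is independent of $i$ because $g_{ij}(x)$ is $\F$-linear. The projection $p$ is regulous, since each $p\circ q_i$ is the projection $U_i\times\F^n\to U_i\hookrightarrow X$, so $p\vert_{X_i}$ is regulous and being regulous can be checked on the open cover $\{X_i\}_{i\in I}$. By construction each $\func{\psi_i}{p^{-1}(U_i)}{U_i\times\F^n}$ is a regulous isomorphism that is fibrewise $\F$-linear, so $(U_i,\psi_i)$ is a vector bundle chart, $\eta$ has constant rank $n$, and $\psi_i\circ\psi_j^{-1}=q_i^{-1}\circ q_j$ recovers exactly the prescribed $g_{ij}$.

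For uniqueness, given another such bundle $\eta'$ with charts $(U_i,\psi_i')$ and the same transition functions, I would define $\func{\Phi}{E(\eta)}{E(\eta')}$ by $\Phi\vert_{X_i}=(\psi_i')^{-1}\circ\psi_i$; on $X_i\cap X_j$ the two definitions agree precisely because both bundles have transition functions $g_{ij}$, so they glue to a regulous, fibrewise-linear bijection, which is an isomorphism by Proposition~\ref{bijective}. The main obstacle is not any single computation but the bookkeeping that endows the abstract quotient $E$ with a regulous structure; this is exactly what Proposition~\ref{gluing} is designed to handle, and the cocycle conditions enter precisely to guarantee that the relation is an equivalence relation and that the transition maps $q_i^{-1}\circ q_j$ are mutually compatible regulous isomorphisms.
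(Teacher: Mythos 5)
Your proposal is correct and follows essentially the same route as the paper: both form the quotient of $\coprod_{i\in I}(U_i\times\F^n)$ by the equivalence relation determined by the cocycle data, transfer the regulous structure from $U_i\times\F^n$ through the resulting bijections, invoke Proposition~\ref{gluing} to obtain the regulous variety $E$, and prove uniqueness by gluing the chart-comparison maps $(\psi_i')^{-1}\circ\psi_i$ into a global isomorphism. Your write-up merely spells out in more detail the verifications (injectivity of the canonical maps, the hypotheses of Proposition~\ref{gluing}, well-definedness of the fibrewise linear structure) that the paper leaves implicit.
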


\begin{proof}
We form an equivalence relation on the union $\coprod_{i\in I}\{i\}\times U_i\times\F^n$ by declaring that $(j,x,v)\in\{j\}\times U_j\times\F^n$ and $(k,x,w)\in\{k\}\times U_k\times\F^n$ are equivalent whenever $g_{jk}(x)(w)=v$. Let $E$ denote the quotient space of this equivalence relation and let $\func{p}{E}{X}$ be the map that sends the equivalence class $[(i,x,v)]$ to $x$. The fiber $E_x=p^{-1}(x)$ is in a natural way an $\F$-vector space. The map $$\func{\psi_i}{p^{-1}(U_i)}{U_i\times\F^n}\text{, }[(i,x,v)]\mapsto(x,v)$$ is bijective and its restriction $ E_x\rightarrow\{x\}\times\F^n$  is an $\F$-linear isomorphism for every $x\in U_i$. We transfer the structure of regulous variety on $U_i\times\F^n$ onto $p^{-1}(U_i)$ by using $\psi_i$. Since the collection $\{g_{ij}\}$ satisfies the cocycle condition, Proposition \ref{gluing} makes it possible to endow the triple $\eta=(E, p, X)$ with a structure of regulous $\F$-vector bundle for which the $(U_i, \psi_i)$ are vector bundle charts. Then $\{g_{ij}\}_{i,j\in I}$ is the collection of transition functions for $\eta$ corresponding to $\{(U_i,\psi_i)\}_{i\in I}$.

Let $\eta'$ be another regulous $\F$-vector bundle of rank $n$ on $X$ which admits vector bundle charts $\{(U_i,\psi'_i)\}_{i\in I}$ with transition functions $\{g_{ij}\}_{i,j\in I}$. Then $\func{h}{\eta'}{\eta}$ given by $$h(v)=\psi^{-1}_i(\psi'_i(v))\text{ for } v\in p(\eta')^{-1}(U_i)$$ is a well defined isomorphism of regulous $\F$-vector bundles on $X$.
\end{proof}

Given an open subset $U\subset X$, we denote by $\reg^0(U, \F)$ the ring of all regulous functions from $U$ into $\F$. The assignment $$\reg^0_X(\F)\colon U\mapsto\reg^0_X(\F)(U):=\reg(U,\F)$$ is a sheaf of rings of $\F$-valued functions on $X$. Clearly, $\reg^0_X(\R)$ coincides with the structure sheaf $\reg^0_X$ of $X$.

Proposition \ref{transition} allows us to relate regulous $\F$-vector bundles on $X$ and locally free $\reg^0_X(\F)$-sheaves of finite rank.

Let $\xi$ be a regulous $\F$-vector bundle on $X$. A \textit{regulous section} $\func{s}{U}{\xi}$ of $\xi$ on an open subset $U\subset X$ is a regulous map $\func{s}{U}{E(\xi)}$ such that $s(x)\in E(\xi)_x$ for every point $x\in U$. The set $\Gamma^0(U,\xi)$ of all regulous sections of $\xi$ on $U$ is an $\reg^0(U, \F)$-module. The assignment $$\Gamma^0_X(\xi)\colon U\mapsto\Gamma^0_X(\xi)(U):=\Gamma^0(U,\xi)$$ is an $\reg^0_X(\F)$-sheaf on $X$. If there is a vector bundle chart $p^{-1}(U)\rightarrow U\times\F^r$ for $\xi$, then the restriction $\Gamma^0_X(\xi)\vert_U$ is isomorphic to the $r$-fold direct sum 
$$(\reg^0_X(\F)\vert_U)^{\oplus r}=(\reg^0_X(\F)\vert_U)\oplus\ldots\oplus(\reg^0_X(\F)\vert_U).$$
Consequently, $\Gamma^0_X(\xi)$ is a locally free $\reg^0_X(\F)$-sheaf of finite rank. We can identify $\reg^0_X(\F)$ and $(\reg^0_X)^{\oplus d(\F)}$ as $\reg^0_X$-sheaves. Thus, $\Gamma^0_X(\F)$ is also a locally free $\reg^0_X$-sheaf of finite rank.

To any morphism $\func{h}{\xi}{\eta}$ of regulous $\F$-vector bundles on $X$ we can associate a morphism $\func{\Gamma^0_X(h)}{\Gamma^0_X(\xi)}{\Gamma^0_X(\eta)}$ of $\reg^0_X(\F)$-sheaves on $X$, defined by $$\Gamma^0_X(\xi)(U)=\Gamma^0(U,\xi)\ni s\mapsto h\circ s\in\Gamma^0(U,\eta)=\Gamma^0_X(\eta)(U)$$ for every open subset $U\subset X$.

\begin{theorem}\label{locally_free}
The functor $\xi\mapsto\Gamma^0_X(\xi)$, defined above, is an equivalence of the category of regulous $\F$-vector bundles on $X$ with the category of locally free $\reg^0_X(\F)$-sheaves of finite rank on $X$.
\end{theorem}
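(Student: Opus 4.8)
The plan is to show that $\xi\mapsto\Gamma^0_X(\xi)$ is fully faithful and essentially surjective. Since $X$ is Noetherian it has finitely many connected components, and as observed just before the statement the restriction of any regulous $\F$-vector bundle to a connected component has constant rank; a locally free $\reg^0_X(\F)$-sheaf likewise has locally constant rank, hence constant rank on each component. The components are clopen, so both categories decompose as products indexed by the components and the functor respects this decomposition (one reassembles via Proposition \ref{gluing}). Consequently I may assume throughout that the relevant bundle or sheaf has constant rank $n$.

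For faithfulness, suppose $h,h'\colon\xi\to\eta$ induce the same sheaf morphism. Given $x\in X$ and $v\in E(\xi)_x$, local triviality provides a vector bundle chart near $x$ and hence a regulous section $s$ of $\xi$ on an open neighbourhood $U$ of $x$ with $s(x)=v$; then $h(v)=(h\circ s)(x)=\Gamma^0_X(h)(s)(x)=\Gamma^0_X(h')(s)(x)=h'(v)$, so $h=h'$. For fullness, let $\varphi\colon\Gamma^0_X(\xi)\to\Gamma^0_X(\eta)$ be a morphism of $\reg^0_X(\F)$-sheaves. I would pass to a common finite open cover $\{U_i\}$ over which both $\xi$ and $\eta$ are trivial; over each $U_i$ the sheaves become free, so $\varphi\vert_{U_i}$ is represented by a matrix of regulous functions, which by the local description of morphisms of product bundles recalled before Proposition \ref{bijective} equals $\Gamma^0_X(h_i)$ for a uniquely determined morphism $h_i\colon\xi\vert_{U_i}\to\eta\vert_{U_i}$. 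Faithfulness forces the $h_i$ to agree on overlaps, so they glue to a morphism $h$ with $\Gamma^0_X(h)=\varphi$.

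For essential surjectivity, let $\mathcal{F}$ be a locally free $\reg^0_X(\F)$-sheaf of constant rank $n$. I would take a finite open cover $\{U_i\}_{i\in I}$ together with trivializations $\theta_i\colon\mathcal{F}\vert_{U_i}\xrightarrow{\ \sim\ }(\reg^0_X(\F)\vert_{U_i})^{\oplus n}$. Each composite $\theta_i\circ\theta_j^{-1}$ is an automorphism of the free sheaf over $U_i\cap U_j$, hence given by a matrix $g_{ij}$ with regulous entries; since this automorphism is invertible with regulous inverse, $g_{ij}$ takes values in $\GL_n(\F)$ and defines a regulous map $U_i\cap U_j\to\GL_n(\F)$. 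The $\{g_{ij}\}$ inherit the cocycle identities from the $\theta_i$, so Proposition \ref{transition} produces a regulous $\F$-vector bundle $\eta$ with charts $\{(U_i,\psi_i)\}$ whose transition functions are exactly $\{g_{ij}\}$. The chart isomorphisms identify $\Gamma^0_X(\eta)\vert_{U_i}$ with the same free sheaves glued along the same $g_{ij}$, so composing with the $\theta_i^{-1}$ yields local isomorphisms that agree on overlaps and glue to an isomorphism $\Gamma^0_X(\eta)\cong\mathcal{F}$.

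The genuinely delicate step is essential surjectivity, and within it the claim that the sheaf-theoretic transition matrices $g_{ij}$ are regulous maps into $\GL_n(\F)$: one must verify both that each entry is regulous and, especially for $\F=\Kw$, that pointwise invertibility of the sheaf automorphism is equivalent to landing in $\GL_n(\Kw)$, which is handled through the embedding of $\Mat_n(\Kw)$ into a matrix algebra over $\C$ or $\R$ as in Subsection \ref{4A}. Everything else reduces, through the gluing and transition-function machinery already established, to routine verifications.
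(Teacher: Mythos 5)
Your proposal is correct and follows essentially the same route as the paper: the paper's (very terse) proof also obtains the correspondence on objects from Proposition \ref{transition} and dismisses bijectivity on morphism sets as an easy check, which is precisely the content you spell out via local trivializations, matrices of regulous functions, and gluing. The only point worth keeping explicit is the one you already flag, namely that the transition matrices land in $\GL_n(\F)$ because $g_{ij}(x)g_{ji}(x)=I_n$ pointwise, so that Proposition \ref{transition} applies.
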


\begin{proof}
By Proposition \ref{transition}, the assignment $\xi\mapsto\Gamma^0_X(\xi)$ establishes a one-to-one correspondence between regulous $\F$-vector bundles on $X$ and locally free $\reg^0_X(\F)$-sheaves of finite rank on $X$, provided that objects of either type are considered up to isomorphism. Furthermore, one easily checks that the map of the sets of morphisms $$\operatorname{Mor}(\xi,\eta)\ni h\mapsto\Gamma^0_X(h)\in\operatorname{Mor}(\Gamma^0_X(\xi),\Gamma^0_X(\eta))$$ is bijective. The proof is complete.
\end{proof}

Let $\xi$ and $\xi'$ be regulous $\F$-vector bundles on $X$. As expected, we say that $\xi'$ is a \textit{regulous $\F$-vector subbundle of} $\xi$ if $E(\xi')$ is a closed regulous subvariety of $E(\xi)$, $p(\xi')=p(\xi)\vert_{E(\xi')}$, and the fiber $E(\xi')_x$ is an $\F$-vector subspace of $E(\xi)_x$ for every point $x\in X$.

Given a morphism $\func{h}{\xi}{\eta}$ of regulous $\F$-vector bundles on $X$, we set $$K(h)=\{v\in E(\xi):h(v)=0\}$$ and define two triples $$\ker h=(K(h),p(\xi)\vert_{K(h)},X),$$ $$\im h=(h(E(\xi)),p(\eta)\vert_{h(E(\xi))},X).$$

\begin{prop}\label{kernel}
Assume that the morphism $\func{h}{\xi}{\eta}$ has the following property: For each connected component $Y$ of $X$, there exists an integer $k$ such that the linear transformation $\func{h_x}{E(\xi)_x}{E(\eta)_x}$ has rank $k$ for every point $x\in Y$. Then $\ker h$ is a regulous $\F$-vector subbundle of $\xi$, and $\im h$ is a regulous $\F$-vector subbundle of $\eta$.
\end{prop}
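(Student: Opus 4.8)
The plan is to reduce to a local, trivialized situation and then realize $\ker h$ and $\im h$ as pullbacks of tautological bundles under explicit regulous maps into Grassmannians, the maps being the orthogonal projections onto the kernels and images of the fibers.

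First I would reduce to the case where $X$ is connected and $h$ has constant rank $k$. The connected components of $X$ are finitely many and clopen (the constructible topology is Noetherian), the property of being a regulous subbundle can be checked componentwise, and the restrictions of $\xi$, $\eta$, $h$ to a component are again of the required type. Next, since local triviality and the property of being a closed regulous subvariety of the total space are local on $X$, I would invoke Proposition \ref{locality} and cover $X$ by affine open sets $U$ over which both $\xi$ and $\eta$ are trivial. Over such a $U$ we may assume $\xi\vert_U=\varepsilon^n_U(\F)$ and $\eta\vert_U=\varepsilon^m_U(\F)$, and then, by the local description of morphisms recalled before Proposition \ref{bijective}, $h(x,v)=(x,g(x)(v))$ for a regulous map $\func{g}{U}{\Mat_{m,n}(\F)}$ with $\operatorname{rank}g(x)=k$ for every $x\in U$.

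The heart of the matter is to produce the orthogonal projection onto $\ker g(x)$ as a regulous map. Consider the self-adjoint positive semidefinite matrix $B(x)=g(x)^*g(x)\in\Mat_n(\F)$; it has rank $k$, with $\ker B(x)=\ker g(x)$ and $(\ker B(x))^\perp=\im B(x)$. Writing $\chi_{B(x)}(t)=t^{n-k}q_x(t)$, where $q_x$ is the degree-$k$ factor carrying the nonzero eigenvalues, one has $q_x(0)=(-1)^k s_k(B(x))$, with $s_k$ the sum of the $k\times k$ principal minors; since the rank is exactly $k$ the value $s_k(B(x))$ is nowhere zero. As $B(x)$ is self-adjoint, hence diagonalizable with real eigenvalues, the matrix $Q(x):=q_x(B(x))/q_x(0)$ is the orthogonal projection onto $\ker g(x)$: it acts as the identity on the $0$-eigenspace and as $0$ on every nonzero eigenspace. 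The key point is that $Q(x)$ is a fixed universal rational expression in the entries of $B(x)$ whose denominator $s_k$ is invertible precisely on the locus of rank $k$; thus $x\mapsto Q(x)$ is the composite of the regulous map $g$ with a map that is regular on the Zariski open set $\{s_k\ne 0\}\subset\Mat_n(\F)$, into which $g$ maps because the rank is constantly $k$, and is therefore regulous by Proposition \ref{composition}. By construction $Q$ takes values in $\mathbb{M}_{n-k}(\F^n)$, so it is a regulous map into the Grassmannian $\mathbb{G}_{n-k}(\F^n)$; the analogous construction with $g(x)g(x)^*\in\Mat_m(\F)$ yields a regulous map $\func{P}{U}{\mathbb{M}_k(\F^m)}$ giving the orthogonal projection onto $\im g(x)$.

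Finally I would identify the bundles with pullbacks. The set $K(h)\cap p^{-1}(U)=\{(x,v):Q(x)(v)=v\}$ is the image of the idempotent regulous endomorphism $(x,v)\mapsto(x,Q(x)(v))$ of $\varepsilon^n_U(\F)$, equivalently the pullback $Q^*\gamma_{n-k}(\F^n)$ of the tautological bundle; it is a closed regulous subvariety, being a common zero set of regulous functions (hence constructible and Euclidean closed), and it is locally trivial, the trivializations being pulled back from the standard charts of the Grassmannian along $Q$ (alternatively, one records the resulting regulous transition functions and applies Proposition \ref{transition}, the pullback of an algebraic, hence regulous, bundle along a regulous map being regulous by a routine check). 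This shows $\ker(h\vert_U)$ is a regulous subbundle of $\xi\vert_U$, and symmetrically $\im(h\vert_U)=P^*\gamma_k(\F^m)$ is a regulous subbundle of $\eta\vert_U$; gluing over the cover gives the assertions on $X$. I expect the main obstacle to be the construction and regulousness of $Q$ and $P$, in particular making the characteristic-polynomial argument work uniformly when $\F=\Kw$: there one should pass to a real matrix representation $\Mat_n(\Kw)\hookrightarrow\Mat_{4n}(\R)$, compute the orthogonal projection onto the relevant $\F$-subspace inside $\R^{4n}$, and verify that it is $\F$-linear—so that it corresponds to a genuine quaternionic idempotent in $\mathbb{M}_{n-k}(\Kw^n)$—with regulous entries.
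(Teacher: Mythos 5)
Your argument is correct, but it is not the route the paper takes: the paper's entire proof is a one-line reference to the corresponding topological statement in Husemoller (Chapter 3, Section 8), asserting that the classical argument adapts with slight modifications. That classical argument trivializes $\xi$ and $\eta$ locally, picks a point $x_0$, chooses a complement to $\ker h_{x_0}$, and uses the nonvanishing of a suitable $k\times k$ minor of the matrix $g$ of $h$ to build adapted local frames near $x_0$; the ``slight modification'' is the observation that the locus where a fixed minor is invertible is open in the constructible topology and that the resulting frame constructions are rational in $g$, hence regulous. Your proof replaces this with a single universal rational formula: the spectral projection $Q(x)=q_x(B(x))/q_x(0)$ onto $\ker g(x)$, with $B=g^*g$ and $q_x$ the nonzero-eigenvalue factor of the characteristic polynomial, whose only denominator $s_k(B(x))$ is nonvanishing precisely on the rank-$k$ locus. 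This buys you something real: $Q$ and $P$ are defined on the whole trivializing chart $U$ without further subdivision by minors, they land directly in $\mathbb{M}_{n-k}(\F^n)$ and $\mathbb{M}_k(\F^m)$, and the identifications $\ker(h\vert_U)=Q^*\gamma_{n-k}(\F^n)$ and $\im(h\vert_U)=P^*\gamma_k(\F^m)$ make regulousness and local triviality immediate from facts already established in the paper (regulousness of compositions, pullbacks of the tautological subbundle, and Corollary \ref{closed=variety} for closedness of the zero set of $(x,v)\mapsto g(x)(v)$). The price is the extra care you correctly flag for $\F=\Kw$, where one must pass to the real matrix representation and check that the real orthogonal projection onto the $\Kw$-invariant kernel is itself $\Kw$-linear; Husemoller's frame-based argument handles all three fields uniformly without this detour. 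Both proofs are valid; yours is the more self-contained and explicit, the paper's the more economical.
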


\begin{proof}
Both assertions can be established by slightly modifying the proof of the corresponding result for topological $\F$-vector bundles given in \cite{HUS}*{Chapter 3, Section 8}.
\end{proof}

Proposition \ref{kernel} implies the following.

\begin{cor}\label{surjective}
Let $\func{h}{\xi}{\eta}$ be a surjective morphism of regulous $\F$-vector bundles on $X$. Then $\ker h$ is a regulous $\F$-vector subbundle of $\xi$.
\end{cor}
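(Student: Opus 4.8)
The plan is to verify that a surjective morphism automatically satisfies the hypothesis of Proposition \ref{kernel}, namely that the fiberwise rank of $h$ is constant on each connected component of $X$; once this is in place, the conclusion is immediate from that proposition. So the work reduces to two bookkeeping observations about surjectivity and rank.

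First I would observe that surjectivity of $h$ as a map between total spaces forces each fiber map $\func{h_x}{E(\xi)_x}{E(\eta)_x}$ to be surjective. Indeed, a morphism of regulous $\F$-vector bundles on $X$ is a morphism over the identity, so that $p(\eta)\circ h=p(\xi)$. Given $w\in E(\eta)_x$, pick any $v\in E(\xi)$ with $h(v)=w$; then $p(\xi)(v)=p(\eta)(h(v))=p(\eta)(w)=x$, so $v\in E(\xi)_x$ and hence $h_x(v)=w$. Thus $h_x$ is onto for every $x\in X$, and in particular $\operatorname{rank}(h_x)=\dim_\F E(\eta)_x=(\operatorname{rank}\eta)(x)$.

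Next I would invoke the fact, recorded just after the definition of regulous $\F$-vector bundles, that the restriction of $\eta$ to any connected component $Y$ of $X$ has constant rank. Hence there is an integer $k$ with $(\operatorname{rank}\eta)(x)=k$, and therefore $\operatorname{rank}(h_x)=k$, for every $x\in Y$. This is precisely the hypothesis of Proposition \ref{kernel}, so that proposition gives that $\ker h$ is a regulous $\F$-vector subbundle of $\xi$, as claimed. The proof has no genuine obstacle: the only step requiring a moment's care is the first one, confirming that set-theoretic surjectivity of $h$ propagates to each fiber, which rests entirely on $h$ being a morphism over the identity on $X$.
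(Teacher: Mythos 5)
Your proposal is correct and follows exactly the route the paper intends: the paper simply states that Corollary \ref{surjective} is implied by Proposition \ref{kernel}, and your two observations (fiberwise surjectivity from surjectivity of $h$ over the identity, plus local constancy of $\operatorname{rank}\eta$ on connected components) are precisely the details being left to the reader. Nothing to add.
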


By combining Propositions \ref{bijective} and \ref{kernel} we get the following.

\begin{cor}
Let $\func{h}{\xi}{\eta}$ be an injective morphism of regulous $\F$-vector bundles on $X$. Then $\im h$ is a regulous $\F$-vector subbundle of $\eta$. Furthermore, $h$ gives rise to an isomorphism $\xi\rightarrow\im h$.
\end{cor}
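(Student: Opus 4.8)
The plan is to reduce the statement to Propositions \ref{bijective} and \ref{kernel}, which have just been established. First I would check that an injective morphism automatically satisfies the constant-rank hypothesis of Proposition \ref{kernel}. Since $\func{h}{E(\xi)}{E(\eta)}$ is injective, its restriction $\func{h_x}{E(\xi)_x}{E(\eta)_x}$ to each fiber is an injective $\F$-linear transformation, so $\operatorname{rank}(h_x)=\dim_\F E(\xi)_x=(\operatorname{rank}\xi)(x)$. As recalled above, the restriction of $\xi$ to any connected component $Y$ of $X$ has constant rank; hence $\operatorname{rank}(h_x)$ is constant on $Y$. Proposition \ref{kernel} then applies and shows that $\im h$ is a regulous $\F$-vector subbundle of $\eta$.

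Next I would produce the isomorphism. Because $\im h$ is a regulous subbundle of $\eta$, its total space $E(\im h)=h(E(\xi))$ is a closed regulous subvariety of $E(\eta)$, and $h$ factors as a map $\func{h'}{E(\xi)}{E(\im h)}$ followed by the inclusion $E(\im h)\hookrightarrow E(\eta)$. Working over vector bundle charts, where the relevant total spaces become constructible closed subsets of products $U\times\F^m$, Proposition \ref{in_coordinates} shows that the corestriction $h'$ is again regulous; thus $\func{h'}{\xi}{\im h}$ is a morphism of regulous $\F$-vector bundles. By construction $h'$ is surjective onto $\im h$, and it is injective since $h$ is. Hence $h'$ is a bijective morphism, and Proposition \ref{bijective} shows that it is an isomorphism $\xi\to\im h$, as asserted.

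I do not expect a genuine obstacle here: the whole content lies in the first observation, namely that injectivity of $h$ forces the fiberwise rank to equal $\operatorname{rank}\xi$, which is locally constant, so that the constant-rank hypothesis of Proposition \ref{kernel} is met. The only point deserving a word of justification is that the corestriction $h'$ of $h$ to the subvariety $E(\im h)$ remains regulous; this is routine, following from the local description of regulous maps into constructible closed subsets (Proposition \ref{in_coordinates}) together with the fact, just established, that $E(\im h)$ is a closed regulous subvariety of $E(\eta)$.
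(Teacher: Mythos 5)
Your proposal is correct and follows exactly the route the paper intends: the corollary is stated as an immediate consequence of Propositions \ref{bijective} and \ref{kernel}, and your argument supplies precisely the details left implicit, namely that injectivity forces the fiberwise rank of $h$ to equal $\operatorname{rank}\xi$, which is constant on connected components, and that the resulting bijective corestriction $\xi\to\im h$ is an isomorphism by Proposition \ref{bijective}. The remark on why the corestriction remains regulous is a reasonable extra precaution but does not change the approach.
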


Let $Y$ be an affine regulous variety, $\theta$ a regulous $\F$-vector bundle on $Y$, and $\func{f}{X}{Y}$ a regulous map. The pullback $f^*\theta$ is a regulous $\F$-vector bundle on $X$. Recall that the total space of $f^*\theta$ is
$$E(f^*\theta)=\{(x,v)\in X\times E(\theta):f(x)=p(\theta)(v)\}$$ and the bundle projection $\func{p(f^ *\theta)}{E(f^*\theta)}{X}$ is given by $(x,v)\mapsto x$. If $\gamma$ is an $\F$-vector subbundle of $\theta$, then $f^*\gamma$ is an $\F$-vector subbundle of $f^*\theta$. Henceforth we identify $f^*\varepsilon^n_Y(\F)$ with $\varepsilon^n_X(\F)$ via the isomorphism given by $$E(f^*\varepsilon^n_Y(\F))=\{(x,(y,v))\in X\times(Y\times\F^n):f(x)=y\}\rightarrow E(\varepsilon^n_X(\F))=X\times\F^n$$ $$(x,(y,v))\mapsto(x,v).$$
Thus, if $\gamma$ is a regulous $\F$-vector subbundle of $\epsilon^n_Y(\F)$, then $f^*\gamma$ is a regulous $\F$-vector subbundle of $\varepsilon^n_X(\F)$.

If an affine real algebraic variety $V$ is regarded as an affine regulous variety, then any pre-algebraic $\F$-vector bundle on $V$ can also be regarded as a regulous $\F$-vector bundle. In particular, the tautological $\F$-vector bundle $\gamma(\F^n)$ on $\Gg$ can be regarded as a regulous $\F$-vector subbundle of $\varepsilon^n_{\Gg}(\F)$.

\begin{prop}\label{pullback}
Let $\xi$ be a regulous $\F$-vector subbundle of $\varepsilon^n_X(\F)$. Then the map $\func{f}{X}{\Gg}$, defined by $\{x\}\times f(x)=E(\xi)_x$ for all $x\in X$, is a regulous map with $f^*\gamma(\F^n)=\xi$
\end{prop}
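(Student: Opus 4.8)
The plan is to exhibit $f$ explicitly via the identification of the Grassmannian $\G$ with the variety $\mathbb{M}_r(\F^n)$ of orthogonal projections recalled in Section \ref{4A}. For $x\in X$ let $f(x)\subset\F^n$ be the subspace with $\{x\}\times f(x)=E(\xi)_x$; it corresponds to the orthogonal projection $P(x)\in\Mat_n(\F)$ of $\F^n$ onto $f(x)$, so that $f$ becomes the assignment $x\mapsto P(x)$ into $\Gg\subset\Mat_n(\F)$. Since the conditions $A^2=A=A^*$ and $\operatorname{trace}(A)=r$ are polynomial, $\Gg$ is a Zariski closed, hence constructible closed, subset of $\Mat_n(\F)$, and the trace distinguishes the summands $\mathbb{M}_r(\F^n)$. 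Thus Proposition \ref{in_coordinates} reduces the regulousness of $\func{f}{X}{\Gg}$ to that of the composite $\func{}{X}{\Mat_n(\F)}$, $x\mapsto P(x)$, in the sense of Definition \ref{function}. As the vector bundle charts for $\xi$ provide a finite cover of $X$ by subsets $U_i$ that are open in the constructible topology, hence Euclidean open and constructible, and on each of which $\xi$ has constant rank $r_i$ (so $f(U_i)\subset\mathbb{G}_{r_i}(\F^n)$), Proposition \ref{locality} lets me check regulousness of $P$ separately on each $U_i$.

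Next, I fix a chart $\func{\varphi}{p^{-1}(U)}{U\times\F^r}$ and set $v_i(x)\in\F^n$ to be the $\F^n$-component of $\varphi^{-1}(x,e_i)$, where $e_1,\ldots,e_r$ is the standard basis of $\F^r$. Each $\func{v_i}{U}{\F^n}$ is regulous, being a composite of regulous maps, and $v_1(x),\ldots,v_r(x)$ is a basis of $f(x)$ for every $x\in U$ because $\varphi_x^{-1}$ restricts to a linear isomorphism onto $E(\xi)_x=\{x\}\times f(x)$. Let $M(x)$ be the $n$-by-$r$ matrix with columns $v_1(x),\ldots,v_r(x)$. Its Gram matrix $M(x)^*M(x)$ is invertible, so $x\mapsto M(x)^*M(x)$ is a regulous map $U\to\GL_r(\F)$, and the orthogonal projection onto $f(x)$ is $P(x)=M(x)(M(x)^*M(x))^{-1}M(x)^*$. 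One checks directly that this matrix lies in $\mathbb{M}_r(\F^n)$ with image $f(x)$. Because matrix multiplication and conjugate transposition are regular maps and matrix inversion on $\GL_r(\F)$ is regular, Proposition \ref{composition} shows that $x\mapsto P(x)$, and hence every entry of $P$, is regulous on $U$. Combining this over the cover yields that $f$ is a regulous map.

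Finally, I turn to the identity $f^*\gamma(\F^n)=\xi$. Under the identification of $f^*\varepsilon^n_{\Gg}(\F)$ with $\varepsilon^n_X(\F)$ fixed earlier in this section, the total space of the pullback is $E(f^*\gamma(\F^n))=\{(x,v)\in X\times\F^n : v\in f(x)\}$. Since $f(x)$ is by definition the subspace with $\{x\}\times f(x)=E(\xi)_x$, this set coincides with $E(\xi)$ as a subset of $X\times\F^n$. As both are closed regulous subvarieties of $X\times\F^n$ carrying the structure induced from the ambient trivial bundle, the two regulous $\F$-vector subbundles of $\varepsilon^n_X(\F)$ are equal.

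I expect the only genuine obstacle to be the verification that the Gram matrix inversion is regulous and that the projection formula $P=M(M^*M)^{-1}M^*$ is valid over $\F=\Kw$, where noncommutativity forces care with the product and conjugation conventions of Section \ref{4A}. This is resolved exactly as indicated there, by embedding $\Mat_r(\Kw)$ into $\Mat_{2r}(\C)$ or $\Mat_{4r}(\R)$ so that inversion remains a regular map; everything else is a routine application of Propositions \ref{in_coordinates}, \ref{locality}, and \ref{composition}.
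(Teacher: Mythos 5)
Your proof is correct and follows essentially the same route as the paper: extract a regulous local frame $v_1,\ldots,v_r$ on each chart domain from $\varphi^{-1}(x,e_i)$, and observe that sending a frame to the subspace it spans lands regulously in $\G$. The only difference is that the paper simply invokes the regularity of the canonical projection $\mathbb{V}_r(\F^n)\to\G$ recalled in Subsection \ref{4A}, whereas you re-derive it via the explicit Gram-matrix projection formula; both are fine.
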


\begin{proof}
By assumption, the total space $E(\xi)$ is a closed regulous subvariety of $X\times\F^n$, and $\func{p(\xi)}{E(\xi)}{X}$ is the restriction of the canonical projection $X\times\F^n\rightarrow X$. Let $\func{\varphi}{p(\xi)^{-1}(U)}{U\times\F^r}$ be a vector bundle chart for $\xi$. For $i=1,\ldots,r$, we define $\func{\varphi_i}{U}{\F^n}$ by $(x,\phi_i(x)))=\phi^{-1}(x, e_i),$ where $(e_1,\ldots,e_r)$ is the standard basis for $\F^r$. Obviously, $\phi_i$ is a regulous map. Furthermore, for each $x\in U$ the vectors $\varphi_1,\ldots,\varphi_r(x)$ form a basis for the $\F$-vector space $f(x)$. Since the map $$U\ni x\mapsto(\varphi_1(x),\ldots, \varphi_r(x))\in\mathbb{V}_r(\F^n)$$ is regulous, so is the map $\func{f\vert_U}{U}{\G}$ (cf. Subsection \ref{4B}). Consequently, $f$ is a regulous map with $\xi=f^*\gamma(\F^n$.
\end{proof}

Let $\xi$ and $\eta$ be regulous $\F$-vector bundles on $X$. The direct sum $\xi\oplus\eta$ is a regulous $\F$-vector bundle on $X$. The total space of $\xi\oplus\eta$ is $$E(\xi\oplus\eta)=\{(v,w)\in E(\xi)\times E(\eta):p(\xi)(v)=p(\eta)(w)\}$$ and the bundle projection $\func{p(\xi\oplus\eta)}{E(\xi\oplus\eta)}{X}$ is given by $(v,w)\mapsto p(\xi)(v)$.

Suppose now that $\xi$ is a regulous $\F$-vector subbundle of $\varepsilon_X^n(\F)$. Then $\xi=f^*\gamma(\F^n)$, where $\func{f}{X}{\Gg}$ is the regulous map defined in Proposition \ref{pullback}. Furthermore, $\xi^\perp=f^*\gamma^\perp(\F^n)$ is a regulous $\F$-vector subbundle of $\varepsilon^n_X(\F)$. We refer to $\xi^\perp$ as the \textit{orthogonal complement of $\xi$ in $\varepsilon^n_X(\F)$.}

\begin{prop}\label{direct_sum}
Let $\xi$ be a regulous $\F$-vector subbundle of $\varepsilon_n^X(\F)$. Then $$\xi\oplus\xi^\perp\ni(v,w)\mapsto v+w\in\tbundle$$ is an isomorphism of regulous $\F$-vector bundles on $X$.
\end{prop}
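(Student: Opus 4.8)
The plan is to exhibit the map in the statement as a bijective morphism of regulous $\F$-vector bundles and then invoke Proposition \ref{bijective}. Write $\Phi\colon\xi\oplus\xi^\perp\to\tbundle$ for the map $(v,w)\mapsto v+w$. First I would check that $\Phi$ is a morphism of regulous $\F$-vector bundles. Since $\xi$ and $\xi^\perp$ are regulous $\F$-vector subbundles of $\varepsilon^n_X(\F)$, their total spaces $E(\xi)$ and $E(\xi^\perp)$ are closed regulous subvarieties of $X\times\F^n$; thus a point of $E(\xi\oplus\xi^\perp)$ is a pair $(v,w)$ with $v=(x,v')\in E(\xi)$ and $w=(x,w')\in E(\xi^\perp)$ lying over a common $x\in X$, and $\Phi(v,w)=(x,v'+w')$. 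As $\Phi$ is the restriction to $E(\xi\oplus\xi^\perp)$ of the (polynomial, hence regular) map $((x,v'),(x,w'))\mapsto(x,v'+w')$, it is regulous. It covers the identity on $X$, and its restriction to the fiber over $x$ is the $\F$-linear map $E(\xi)_x\oplus E(\xi^\perp)_x\to\F^n$ sending $(v',w')$ to $v'+w'$; hence $\Phi$ is a morphism.

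Next I would verify that $\Phi$ is bijective, which is the only step carrying real content. By Proposition \ref{pullback}, the fiber $E(\xi)_x$ is $\{x\}\times P$, where $P=f(x)$ is an $\F$-vector subspace of $\F^n$, and correspondingly $E(\xi^\perp)_x=\{x\}\times P^\perp$, with $P^\perp$ the orthogonal complement of $P$ for the standard inner product. Because this inner product is positive definite --- also when $\F=\Kw$, where $\langle x,x\rangle=\sum_i x_i\bar{x}_i=\sum_i|x_i|^2$ --- one has $P\cap P^\perp=0$ and $\dim_\F P+\dim_\F P^\perp=n$, so $\F^n=P\oplus P^\perp$ as $\F$-vector spaces. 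Therefore the fiber map $\Phi_x\colon P\oplus P^\perp\to\F^n$ is an $\F$-linear isomorphism for every $x\in X$. Since $\Phi$ covers the identity on $X$ and is bijective on each fiber, $\Phi$ is bijective.

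Finally, Proposition \ref{bijective} says that a bijective morphism of regulous $\F$-vector bundles is an isomorphism, so $\Phi$ is an isomorphism, as claimed. I expect no genuine obstacle here: the decomposition $\F^n=P\oplus P^\perp$ in the quaternionic case, which rests on the positive definiteness of the chosen inner product, is the one place deserving an explicit word, while the regularity of $\Phi$ reduces to the regularity of addition on $\F^n$. An alternative formulation, which I would mention only in passing, is that $\Phi$ is the pullback under $f$ of the corresponding isomorphism $\gamma(\F^n)\oplus\gamma^\perp(\F^n)\to\varepsilon^n_{\Gg}(\F)$ over the Grassmannian.
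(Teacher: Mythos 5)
Your proposal is correct and follows essentially the same route as the paper: the paper's proof simply observes that the map is a bijective morphism and invokes Proposition \ref{bijective}, which is exactly your argument with the fiberwise details (the decomposition $\F^n=P\oplus P^\perp$ from positive definiteness of the inner product, including the quaternionic case) written out explicitly.
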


\begin{proof}
It is clear that the map under consideration is an injective morphism. Thus it suffices to apply Proposition \ref{bijective}.
\end{proof}

Let $\xi$ be a regulous $\F$-vector bundle on $X$. A regulous section of $\xi$ defined on $X$ is called a \textit{global section}. We say that $\xi$ is \textit{generated by global sections} if there exist global regulous sections $s_1,\ldots, s_n$ of $\xi$ such that the vectors $s_1(x), \ldots, s_n(x)$ span the $\F$-vector space $E(\xi)_x$ for every point $x\in X$.

The following provides a method for constructing global regulous sections.

\begin{prop} \label{Tietze_sections}
Let $\xi=(E, p, X)$ be a regulous $\F$-vector bundle on $X$. Let $\func{f}{X}{\R}$ be a regulous function and $\func{s}{X\setminus \sZ(f)}{E}$ a regulous section of $\xi$. Then for each integer $N$ large enough, the map $\func{u}{X}{E}$ defined by
$$u(x)=f(x)^Ns(x)\text{ for }x\in X\setminus\sZ(f)\text{ and } u(x)=0\text{ for } x\in \sZ(f)$$ is a global regulous section of $\xi$.
\end{prop}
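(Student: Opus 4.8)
The plan is to reduce the assertion to a scalar statement and then invoke the {\L}ojasiewicz-type estimate of Proposition \ref{Lojasiewicz}. Since $X$ is quasi-compact in the constructible topology, I would first choose a finite collection $\{(U_i,\varphi_i)\}_{i\in I}$ of vector bundle charts $\func{\varphi_i}{p^{-1}(U_i)}{U_i\times\F^{n_i}}$ whose domains cover $X$. Because regulousness of a map is a local condition on its source and the $\varphi_i$ are regulous isomorphisms, $u$ is a regulous section as soon as each composite $\varphi_i\circ(u\vert_{U_i})$ is regulous. As $u$ is a section, this composite has the form $x\mapsto(x,\tilde u_i(x))$, and a map into the product $U_i\times\F^{n_i}$ is regulous precisely when its components are; so everything comes down to showing that the $\F$-valued (equivalently, the $d(\F)$ real-valued, $d(\F)=\dim_\R\F$) components of the local representation
$$\tilde u_i=f^N\tilde s_i\text{ on }U_i\setminus\sZ(f),\qquad \tilde u_i=0\text{ on }U_i\cap\sZ(f),$$
are regulous functions on $U_i$, where $\tilde s_i$ denotes the local representation of $s$.

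Thus it suffices to prove the following scalar claim: if $Z$ is an affine regulous variety, $\func{\phi}{Z}{\R}$ is regulous, and $\func{g}{Z\setminus\sZ(\phi)}{\R}$ is regulous, then for all $N$ large the function $h$ equal to $\phi^N g$ on $Z\setminus\sZ(\phi)$ and to $0$ on $\sZ(\phi)$ is regulous. One realizes $Z$ (and hence $U_i$, which is affine regulous by Proposition \ref{open=variety}) as a Euclidean-closed constructible, hence closed and therefore locally closed, semialgebraic subset of some $\R^m$; since regulous functions are semialgebraic, both $\phi$ and $g$ are continuous semialgebraic. Proposition \ref{Lojasiewicz} then furnishes an integer $N_0$ such that $h$ is continuous for every $N\geq N_0$. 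To upgrade continuity to regulousness, note that on $Z\setminus\sZ(\phi)$ the function $h$ agrees with the regulous function $\phi^N g$, while on $\sZ(\phi)$ it vanishes; choosing by Proposition \ref{stratifications} a stratification of the constructible set $Z\setminus\sZ(\phi)$ on which $\phi^N g$ is regular, together with any stratification of $\sZ(\phi)$, their (disjoint) union is a stratification of $Z$ on whose strata $h$ is regular. Being in addition continuous, $h$ is regulous in the sense of Definition \ref{function}.

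Finally, applying the scalar claim to $\phi=f\vert_{U_i}$ and to each real component of $\tilde s_i$ produces, for each of the finitely many charts and components, a threshold exponent; taking $N$ to be the maximum of these thresholds makes every $\tilde u_i$ simultaneously regulous, whence $u$ is a global regulous section for all such $N$. The only genuine obstacle is the continuity of $u$ across $\sZ(f)$, which is exactly what Proposition \ref{Lojasiewicz} supplies, once one checks that its hypotheses are met here---namely that the chart domains, realized through Proposition \ref{open=variety}, are locally closed semialgebraic sets and that the data are semialgebraic. Everything else is routine bookkeeping with stratifications.
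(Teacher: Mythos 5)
Your proof is correct and follows essentially the same route as the paper's: cover $X$ by finitely many vector bundle charts, reduce to the local representations of $s$, apply Proposition \ref{Lojasiewicz} to get continuity of $f^N\tilde s_i$ extended by zero, and glue. The paper leaves the componentwise reduction and the passage from continuity to regulousness implicit, whereas you spell them out (correctly); there is no substantive difference in method.
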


\begin{proof}
Let $\{(U_i, \varphi_i)\}_{i\in I}$ be a finite collection of vector bundle charts for $\xi$ with $\func{\phi_i}{p^{-1}(U_i)}{U_i\times\F^{n_i}}$ such that $\{U_i\}_{i\in I}$ is a cover of $X$. Setting $U=X\setminus\sZ(f)$, we get a regulous map $$\func{\phi_i\circ s\vert_{U\cap U_i}}{U\cap U_i}{U_i\times\F^{n_i}}.$$ Note that $$U\cap U_i=U_i\setminus \sZ(f\vert_{U_i}).$$
By Proposition \ref{Lojasiewicz}, for each integer $N$ large enough, the map $\func{u_i}{U_i}{E}$ defined by $$u_i(x)=f(x)^Ns(x)\text{ for }x\in U_i\setminus \sZ(f\vert_{U_i})\text{ and } u_i(x)=0\text{ for } x\in \sZ(f\vert_{U_i})$$ is a regulous section of $\xi$. Since $u\vert_{U_i}=u_i$ for all $i\in I$, it follows that $u$ is a global regulous section of $\xi$.
\end{proof}

In the following result, we list the main geometric properties of regulous vector bundles.

\begin{theorem}\label{bundles_main}
Let $\xi$ be a regulous $\F$-vector bundle on $X$. Then $\xi$ is generated by $n$ global regulous sections, for some positive integer $n$. Furthermore, the following conditions hold:
	\begin{enumerate}
	\item There exists a surjective regulous morphism $\tbundle\rightarrow\xi$.
	\item $\xi$ is isomorphic to a regulous $\F$-vector subbundle of $\tbundle$.	
	\item There exists a regulous map $\func{f}{X}{\Gg}$ such that $\xi$ is isomorphic to $f^*\gamma(\F^n)$ in the category of regulous $\F$-vector bundles on $X$.	
	\item There exists a regulous $\F$-vector bundle $\eta$ on $X$ such that $\xi\oplus\eta$ and $\tbundle$ are isomorphic in the category of regulous $\F$-vector bundles on $X$.
	\item The total space $E(\xi)$ is an affine regulous variety.
	\end{enumerate}
\end{theorem}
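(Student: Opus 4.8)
The plan is to establish the generation statement first, since every numbered item follows from it by the structural results already assembled. Realize $X$ as a constructible closed subset of an affine real algebraic variety $V$, and fix a finite family of vector bundle charts $\{(U_i,\varphi_i)\}_{i\in I}$ covering $X$, with $\varphi_i\colon p^{-1}(U_i)\to U_i\times\F^{n_i}$. Over each $U_i$ the standard basis of $\F^{n_i}$ pulls back through $\varphi_i^{-1}$ to regulous sections $s^{(i)}_1,\ldots,s^{(i)}_{n_i}$ of $\xi$ on $U_i$ whose values span $E(\xi)_x$ at every $x\in U_i$. Since $X\setminus U_i$ is constructible closed, Proposition \ref{closed=zero} supplies a regulous function $f_i$ on $X$ with $\sZ(f_i)=X\setminus U_i$, equivalently $X\setminus\sZ(f_i)=U_i$. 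Applying Proposition \ref{Tietze_sections} to each pair $(f_i,s^{(i)}_k)$, for a uniformly large $N$ every product $f_i^N s^{(i)}_k$ extends by zero on $\sZ(f_i)$ to a \emph{global} regulous section $u^{(i)}_k$ of $\xi$.

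I would then check that the finite list $\{u^{(i)}_k\}$ generates $\xi$. At an arbitrary point $x\in X$ choose $i$ with $x\in U_i$; then $f_i(x)\neq 0$, so $u^{(i)}_k(x)=f_i(x)^N s^{(i)}_k(x)$, and since $f_i(x)^N$ is a nonzero real scalar it does not affect the span, whence $u^{(i)}_1(x),\ldots,u^{(i)}_{n_i}(x)$ already span $E(\xi)_x$. Relabelling the $u^{(i)}_k$ as $s_1,\ldots,s_n$ with $n=\sum_i n_i$ gives the asserted generating system, and the morphism $h\colon\tbundle\to\xi$ defined fiberwise by $h(x,(c_1,\ldots,c_n))=\sum_j c_j s_j(x)$ is a surjective regulous morphism, which is assertion (1).

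The remaining items follow by the kernel--complement device. Since $h$ is surjective, $\operatorname{rank} h_x=\dim_\F E(\xi)_x$ is constant on each connected component of $X$, so Corollary \ref{surjective} shows that $\zeta:=\ker h$ is a regulous $\F$-vector subbundle of $\tbundle$. Forming its orthogonal complement and invoking Proposition \ref{direct_sum} gives $\zeta\oplus\zeta^\perp\cong\tbundle$; fiberwise $h_x$ restricts to an isomorphism $\zeta^\perp_x\to E(\xi)_x$, so $h|_{\zeta^\perp}\colon\zeta^\perp\to\xi$ is a bijective morphism and hence an isomorphism by Proposition \ref{bijective}. This realizes $\xi$ as the subbundle $\zeta^\perp$ of $\tbundle$, giving (2); Proposition \ref{pullback} then produces the classifying regulous map $f\colon X\to\Gg$ with $f^*\gamma(\F^n)\cong\zeta^\perp\cong\xi$, which is (3); taking $\eta:=\zeta$ yields $\xi\oplus\eta\cong\zeta^\perp\oplus\zeta\cong\tbundle$, which is (4). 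Finally, $E(\xi)\cong E(\zeta^\perp)$ is a closed regulous subvariety of $E(\tbundle)=X\times\F^n$, itself an affine regulous variety, so Corollary \ref{closed=variety} shows that $E(\xi)$ is an affine regulous variety, giving (5).

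I expect the main obstacle to be the first step, manufacturing enough global regulous sections. In the algebraic category one has genuine partitions of unity and the argument is routine, but here the only available glue is the {\L}ojasiewicz-type extension of Proposition \ref{Tietze_sections}, which forces the damping by $f_i^N$; the crucial point making this work is that $X$ being affine lets Proposition \ref{closed=zero} cut out exactly $X\setminus U_i$, so that the damping factor is invertible precisely where the local sections are needed. Once the sections are in hand, the passage through $\ker h$ and its orthogonal complement is formal, the only subtlety being the constant-rank hypothesis required by Corollary \ref{surjective}, which is automatic from the surjectivity of $h$.
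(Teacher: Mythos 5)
Your proposal is correct and follows essentially the same route as the paper: global sections are manufactured from local frames via Proposition \ref{closed=zero} and Proposition \ref{Tietze_sections}, the surjective morphism $h$ gives (1), and the kernel together with its orthogonal complement (Corollary \ref{surjective}, Proposition \ref{direct_sum}, Proposition \ref{bijective}) yields (2)--(5) exactly as in the paper. You merely spell out in more detail the spanning argument that the paper leaves implicit.
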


\begin{proof}
	Let $\{(U_i, \varphi_i)\}_{i\in I}$ be a finite collection of vector bundle charts for $\xi$ such that the sets $U_i$ cover $X$. It is clear that the restriction $\xi\vert_{U_i}$ is generated by regulous sections defined on $U_i$. By Proposition \ref{closed=zero}, $U_i=X\setminus\sZ(f_i)$ for some regulous function $\func{f_i}{X}{\R}$. In view of Proposition \ref{Tietze_sections}, $\xi$ is generated by $n$ global regulous sections $s_1,\ldots, s_n$, for some positive integer $n$. The map $$h\colon X\times \F^n\ni(x,(v_1,\ldots, v_n))\mapsto v_1s_1(x)+\ldots+v_ns_n(x)\in E(\xi)$$ is then a surjective regulous morphism $\func{h}{\tbundle}{\xi}$, which proves (1).
	
	By Corollary \ref{surjective}, $\eta=\ker h$ is a regulous $\F$-vector subbundle of $\tbundle$. According to Proposition \ref{direct_sum}, $\eta\oplus\eta^\perp$ is isomorphic to $\tbundle$ and the restriction of $h$ to $\eta^\perp$ is a bijective regulous morphism $\eta^\perp\rightarrow\xi$. Thus, $\eta^\perp$ is isomorphic to $\xi$ by Proposition \ref{bijective}. Consequently, conditions (2) and (4) hold. In view of (2) and Proposition \ref{pullback}, condition (3) holds as well. Condition (5) follows from (2). The proof is complete.

\end{proof}

In Subsection \ref{4B} we recalled the essential distinction between pre-algebraic vector bundles and algebraic vector bundles. Theorem \ref{bundles_main} shows that there is no counterpart of this phenomenon for regulous vector bundles.

The first assertion in Theorem \ref{bundles_main} can be also proved in a different way. Indeed, for any regulous $\F$-vector bundle $\xi$ on $X$, the sheaf $\Gamma^0_X(\xi)$ of its regulous sections is a locally free $\reg^0_X$-sheaf. In particular, $\Gamma^0_X(\xi)$ is a quasicoherent $\reg^0_X$-sheaf. Thus, according to the regulous version of Cartan's theorem A \cite{FR}*{Th{\'e}or{\`e}me 5.46}, the $\reg^0_X$-sheaf $\Gamma^0_X(\xi)$ is generated by global sections. It follows that the $\F$-vector bundle $\xi$ is generated by global regulous sections. However, we think that the simple direct proof given above is of independent interest.

The correspondence between vector bundles and projective modules established by Serre \cite{Serre} and Swan \cite{SWN} also holds in regulous geometry. To describe it we revert to the notation introduced in the paragraph preceding Theorem \ref{locally_free}.

Let $\xi$ be a regulous $\F$-vector bundle on $X$. By Theorem \ref{bundles_main}(4), $\Gamma^0(X,\xi)$ is a finitely generated projective $\reg^0(X,\F)$-module. To any morphism $\func{h}{\xi}{\eta}$ of regulous $\F$-vector bundles on $X$ we can associate a homomorphism $$\Gamma^0(X,\xi)\ni s\mapsto h\circ s\in\Gamma^0(X,\eta)$$ of $\reg^0(X, \F)$-modules.

\begin{theorem}\label{s-s}
The functor $\xi\mapsto\Gamma^0(X, \xi)$ described above is an equivalence of the category of regulous $\F$-vector bundles on $X$ with the category of finitely generated projective $\reg^0(X, \F)$-modules.
\end{theorem}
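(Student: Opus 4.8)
The plan is to show that the functor $F\colon\xi\mapsto\Gamma^0(X,\xi)$ is well defined, faithful, full, and essentially surjective; this is the standard Serre--Swan argument, with all the geometric input coming from Theorem \ref{bundles_main}. That $F$ lands in the stated category is immediate: by Theorem \ref{bundles_main}(4) there is a regulous $\F$-vector bundle $\eta$ with $\xi\oplus\eta\cong\varepsilon^n_X(\F)$, so $\Gamma^0(X,\xi)$ is a direct summand of $\Gamma^0(X,\varepsilon^n_X(\F))=\reg^0(X,\F)^n$ and is therefore a finitely generated projective $\reg^0(X,\F)$-module. For faithfulness I would use that $\xi$ is generated by global regulous sections (the first assertion of Theorem \ref{bundles_main}): for each $x\in X$ the vectors $s(x)$, with $s$ ranging over global sections, span $E(\xi)_x$. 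Hence if $\func{h,h'}{\xi}{\eta}$ satisfy $h\circ s=h'\circ s$ for every global section $s$, then $\F$-linearity of $h_x$ and $h'_x$ gives $h_x=h'_x$ at every point, so $h=h'$.

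The key to fullness is the trivial case. By the matrix description of morphisms recalled just after Proposition \ref{bijective}, sending a regulous map $\func{g}{X}{\Mat_{n,m}(\F)}$ to the morphism $(x,v)\mapsto(x,g(x)(v))$ identifies $\operatorname{Mor}(\varepsilon^m_X(\F),\varepsilon^n_X(\F))$ with $\Hom_{\reg^0(X,\F)}(\reg^0(X,\F)^m,\reg^0(X,\F)^n)$, and under this identification $F$ is simply the obvious bijection between regulous matrices and module homomorphisms. For arbitrary $\xi,\eta$ I would choose $\alpha,\beta$ with $\xi\oplus\alpha\cong\varepsilon^m_X(\F)$ and $\eta\oplus\beta\cong\varepsilon^n_X(\F)$, with inclusions $\iota_\xi,\iota_\eta$ and projections $\pi_\xi,\pi_\eta$. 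Given a module homomorphism $\func{\varphi}{\Gamma^0(X,\xi)}{\Gamma^0(X,\eta)}$, the composite $\iota_\eta\circ\varphi\circ\pi_\xi$ is realized on sections by some $\func{\tilde h}{\varepsilon^m_X(\F)}{\varepsilon^n_X(\F)}$; setting $h$ to be the bundle composite $\xi\hookrightarrow\varepsilon^m_X(\F)\xrightarrow{\tilde h}\varepsilon^n_X(\F)\twoheadrightarrow\eta$ gives $\Gamma^0(h)=\pi_\eta\iota_\eta\circ\varphi\circ\pi_\xi\iota_\xi=\varphi$, since $\pi_\eta\iota_\eta$ and $\pi_\xi\iota_\xi$ are identities. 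Thus $F$ is full.

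For essential surjectivity, let $P$ be a finitely generated projective $\reg^0(X,\F)$-module and pick an idempotent endomorphism $e$ of $\reg^0(X,\F)^n$ with $\im e\cong P$. By the trivial-case identification $e=\Gamma^0(h)$ for $\func{h}{\varepsilon^n_X(\F)}{\varepsilon^n_X(\F)}$, $h(x,v)=(x,g(x)(v))$, and since that identification respects composition, $e^2=e$ yields $g(x)^2=g(x)$ for all $x$. The rank of an idempotent matrix is locally constant along the continuous family $g$, so Proposition \ref{kernel} applies to both $h$ and $\mathrm{id}-h$, producing regulous $\F$-vector subbundles $\im h$ and $\ker h=\im(\mathrm{id}-h)$ of $\varepsilon^n_X(\F)$, and the natural bijective morphism $\im h\oplus\ker h\to\varepsilon^n_X(\F)$ is an isomorphism by Proposition \ref{bijective}. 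Taking global sections turns this into the splitting $\reg^0(X,\F)^n=\Gamma^0(X,\im h)\oplus\Gamma^0(X,\ker h)$ under which $e=\Gamma^0(h)$ is exactly the projection onto the first summand, so $\Gamma^0(X,\im h)=\im e\cong P$.

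I expect the essential-surjectivity step to be the main obstacle: one must confirm that the idempotent morphism $h$ genuinely has $\im h$ a regulous subbundle, which rests on the local constancy of $\operatorname{rank} g(x)$ needed to invoke Proposition \ref{kernel}, and on the verification---slightly delicate for $\F=\Kw$ because $\reg^0(X,\F)$ is then noncommutative---that passing to global sections sends the bundle splitting $\varepsilon^n_X(\F)\cong\im h\oplus\ker h$ to the module splitting $\reg^0(X,\F)^n=\im e\oplus\ker e$. The remaining bookkeeping (compatibility of $F$ with composition, and additivity of $F$) is routine.
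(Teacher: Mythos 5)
Your proposal is correct and follows the same route as the paper, which simply observes that the topological Serre--Swan argument of Atiyah (K-Theory, pp.~30--31) adapts verbatim once Theorem \ref{bundles_main} supplies the direct-summand-of-a-trivial-bundle statement; your faithfulness/fullness/idempotent steps are exactly that adaptation, and the points you flag as delicate (local constancy of the rank of the idempotent family, so that Proposition \ref{kernel} applies, and the left-module conventions for $\F=\Kw$) do go through with the conventions fixed in Subsection \ref{4A}.
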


\begin{proof}
One can easily adapt the proof of the counterpart of this result in the topological framework given in \cite{ATI}*{pp. 30-31}.
\end{proof}

Theorem \ref{bundles_main} allows us to prove the following variant of Proposition \ref{direct_sum}.

\begin{prop}\label{splitting}
Let $\theta$ be a regulous $\F$-vector bundle on $X$, and $\xi$ a regulous $\F$-vector subbundle of $\theta$. Then there exists a regulous $\F$-vector subbundle $\eta$ of $\theta$ such that $$\alpha\colon\xi\oplus\eta\ni(v,w)\mapsto v+w\in\theta$$ is a regulous isomorphism.
\end{prop}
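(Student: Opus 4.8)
The plan is to realize both $\xi$ and $\theta$ inside a trivial bundle and to take for $\eta$ the part of $\theta$ orthogonal to $\xi$. By Theorem~\ref{bundles_main}(2) applied to $\theta$, there is a regulous isomorphism of $\theta$ onto a regulous $\F$-vector subbundle of $\tbundle$ for some $n$; transporting $\xi$ along this isomorphism, I may assume from the start that
\[
\xi\subset\theta\subset\tbundle,
\]
with $E(\xi)$ and $E(\theta)$ closed regulous subvarieties of $X\times\F^n$ and each fiber of $\xi$ a subspace of the corresponding fiber of $\theta$. Let $\xi^\perp$ be the orthogonal complement of $\xi$ in $\tbundle$. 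By Proposition~\ref{direct_sum} the map $\xi\oplus\xi^\perp\to\tbundle$, $(v,w)\mapsto v+w$, is a regulous isomorphism; composing its inverse with the projection onto the first summand yields a regulous morphism $\func{P}{\tbundle}{\xi}$ which is the fiberwise orthogonal projection onto $\xi$ and, in particular, restricts to the identity on $\xi$.

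Next I would restrict $P$ to $\theta$. Since $\xi\subset\theta$ and $P$ is the identity on $\xi$, the regulous morphism $\func{P\vert_\theta}{\theta}{\xi}$ is surjective, because its image over each $x$ already contains $E(\xi)_x$. Hence, by Corollary~\ref{surjective},
\[
\eta:=\ker(P\vert_\theta)
\]
is a regulous $\F$-vector subbundle of $\theta$, with fibers $E(\eta)_x=E(\theta)_x\cap E(\xi)_x^\perp$, since $P(u)=0$ precisely when $u$ lies in the orthogonal complement of $E(\xi)_x$.

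It then remains to verify that $\func{\alpha}{\xi\oplus\eta}{\theta}$, $(v,w)\mapsto v+w$, is an isomorphism. This map is a regulous morphism, as addition in $X\times\F^n$ is regulous and $v+w\in E(\theta)_x$ whenever $v\in E(\xi)_x$ and $w\in E(\eta)_x$. Fiberwise it is bijective: injectivity follows from $E(\xi)_x\cap E(\xi)_x^\perp=0$, while surjectivity follows from the decomposition $u=P(u)+(u-P(u))$, in which $P(u)\in E(\xi)_x$ and $u-P(u)\in E(\theta)_x\cap E(\xi)_x^\perp=E(\eta)_x$. A fiberwise bijective morphism is bijective on total spaces, so $\alpha$ is a bijective morphism and therefore a regulous isomorphism by Proposition~\ref{bijective}.

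The one point requiring care is the regularity of the orthogonal projection $P$ together with the resulting application of Corollary~\ref{surjective}; the cleanest route is the one above, obtaining $P$ from the isomorphism of Proposition~\ref{direct_sum} rather than writing it out in coordinates via the projection matrices furnished by Proposition~\ref{pullback}. Everything else is elementary linear algebra carried out fiber by fiber.
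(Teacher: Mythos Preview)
Your proof is correct and follows essentially the same approach as the paper's: embed $\theta$ (and hence $\xi$) in $\tbundle$ via Theorem~\ref{bundles_main}(2), use the orthogonal splitting of Proposition~\ref{direct_sum} to obtain the projection $\tbundle\to\xi$, restrict it to $\theta$, and take $\eta$ to be its kernel. The only cosmetic difference is that you invoke Corollary~\ref{surjective} after noting surjectivity of $P\vert_\theta$, whereas the paper cites Proposition~\ref{kernel} directly; your added verification of the bijectivity of $\alpha$ just spells out what the paper leaves implicit.
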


\begin{proof}
In view of Theorem \ref{bundles_main}(2), we may assume that $\theta$ is a regulous $\F$-vector subbundle of $\tbundle$. Hence $\xi$ is also a regulous $\F$-vector subbundle of $\tbundle$. We identify $\xi\oplus\xi^\perp$ with $\tbundle$ via the isomorphism described in Proposition \ref{direct_sum}. Let $\func{h}{\theta}{\xi}$ be the restriction of the canonical projection $\tbundle=\xi\oplus\xi^\perp\rightarrow\xi$. By Proposition \ref{kernel}, $\eta=\ker h$ is a regulous $\F$-vector subbundle of $\theta$. Furthermore, $\alpha$ is a bijective morphism, hence an isomorphism in view of Proposition \ref{bijective}.
\end{proof}

\begin{remark}\label{hom_bundle}
If $\xi$ and $\eta$ are regulous $\F$-vector bundles on $X$, where $\F=\R$ or $\F=\C$, then we have at our disposal the familiar method of constructing new regulous $\F$-vector bundles: $\xi\otimes\eta$, $\Hom(\xi, \eta)$, $\Lambda^k \xi$ (the $k$-th exterior power) and $\xi^\vee$ (the dual bundle). For example, the fiber of $\Hom(\xi, \eta)$ over any point $x\in X$ is the $\F$-vector space of all $\F$-linear transformations from $E(\xi)_x$ into $E(\eta)_x$.

For regulous $\Kw$-vector bundles $\xi$ and $\eta$ on $X$, we can define in the obvious way a regulous $\R$-vector bundle $\Hom(\xi, \eta)$ on $X$, whose fiber over any point $x\in X$ is the $\R$-vector space of all $\Kw$-linear transformations from $E(\xi)_x$ into $E(\eta)_x$.
\end{remark}

We conclude this section by describing how regulous vector bundles can be identified with stratified-algebraic vector bundles introduced in \cite{KK} and further studied in \cites{KUCH5, KUCH7, KK2}

\begin{remark}\label{stratified=regulous}
According to Theorem \ref{bundles_main}(2), the investigation of regulous $\F$-vector bundles on $X$ can be restricted to regulous $\F$-vector subbundles of $\tbundle$, where $n$ is a nonnegative integer. We now give an alternative description of such subbundles. We may assume without loss of generality that $X$ is a constructible closed subset of some affine real algebraic variety $V$.

Suppose that $\xi$ is a regulous $\F$-vector subbundle of $\tbundle$. Let $\func{f}{X}{\Gg}$ be the map defined by
\begin{equation}\label{class}
\{x\}\times f(x)=E(\xi)_x\text{ for all } x\in X.
\end{equation}
By Proposition \ref{pullback}, $f$ is a regulous map and $\xi=f^*\gamma(\F^n)$. In view of Proposition \ref{regulous=continuous}, $f$ is continuous (in the Euclidean topology) and there exists a stratification $\sS$ of $X$ such that for every stratum $S\in\sS$ the restriction $f\vert_S$ is a regular map. Thus, $\xi$ is a topological (in the Euclidean topology) $\F$-vector subbundle of $\tbundle$ with the following property:
\begin{equation}\label{strat}
\text{\parbox{.85\textwidth}{For every stratum $S\in\sS$ the restriction $\xi\vert_S$ is an algebraic $\F$-vector subbundle of $\tbundle$.}}
\end{equation}

Conversely, suppose that $\xi$ a topological $\F$-vector subbundle of $\tbundle$ satisfying condition (\ref{strat}) for some stratification $\sS$ of $X$ (that is, a stratified-algebraic $\F$-vector bundle on $X$). Defining a continuous map $\func{f}{X}{\Gg}$ by (\ref{class}), we get $\xi=f^*\gamma(\F^n)$. In addition, the restriction $f\vert_S$ is a regular map for every stratum $S\in\sS$ (cf. \cite{BCR}*{Sections 12.1, 12.6, 13.3}). It follows that $f$ is a regulous map, and $\xi$ can be regarded as a regulous $\F$-vector subbundle of $\tbundle$.

In conclusion, regulous $\F$-vector bundles can be identified with stratified-algebraic $\F$-vector bundles. Furthermore, one can show in a similar way that morphisms of regulous $\F$-vector bundles can be identified with morphisms of stratified-algebraic $\F$-vector bundles. It should be mentioned that in \cites{KUCH5, KUCH7, KK, KK2} stratified-algebraic $\F$-vector bundles on $X$ are considered explicitly only in the case where $X$ is an affine real algebraic variety.
\end{remark}

\section{Families of vector subspaces}

In this section we present a new approach to regulous and algebraic vector bundles, based on recently obtained results \cite{KKK}. As in Section 4, we let $\F$ denote $\R$, $\C$ or $\Kw$.

\begin{definition}
Let $\Omega$ be a set and let $n$ be a nonnegative integer. A \textit{family of $\F$-vector subspaces of $\F^n$ on $\Omega$} is a triple $\xi=(E,p,\Omega)$, where $E$ is a subset of $\Omega\times\F^n$, $\func{p}{E}{\Omega}$ is the restriction of the canonical projection $\Omega\times\F^n\rightarrow\Omega$, and the fiber $E_x=p^{-1}(x)$ is an $\F$-vector subspace of $\{x\}\times\F^n$ for every point $x\in\Omega$.

The \textit{classifying map} of $\xi$ is the map $\func{f}{\Omega}{\Gg}$ defined by $\{x\}\times f(x)=E_x$ for all $x\in X$
\end{definition}

For any subset $\Sigma\subset\Omega$, the restriction $\xi\vert_\Sigma=(p^{-1}(\Sigma),p\vert_{p^{-1}(\Sigma)},\Sigma)$ is a family of $\F$-vector subspaces of $\F^n$ on $\Sigma$.

\begin{theorem}\label{on_curves}
Let $V$ be an affine real algebraic variety, $X\subset V$ a constructible closed subset, and $\xi$ a family of $\F$-vector subspaces of $\F^n$ on $X$. Then the following conditions are equivalent:
\begin{enumerate}[label={\upshape (\alph*)}]
\item $\xi$ is a regulous $\F$-vector subbundle of $\tbundle$.
\item For each irreducible real algebraic curve $C\subset V$, the restriction $\xi\vert_{X\cap C}$ is a regulous $\F$-vector subbundle of $\varepsilon_{X\cap C}^{n}(\F)$.
\end{enumerate}
\end{theorem}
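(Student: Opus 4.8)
The plan is to treat the two implications separately; (a)$\implies$(b) is immediate, while (b)$\implies$(a) carries all the content and is where I would invoke \cite{KKK}.

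For (a)$\implies$(b), fix an irreducible real algebraic curve $C\subset V$. Since $C$ is Zariski closed in $V$, the set $X\cap C$ is a constructible closed subset of $X$, hence a regulous subvariety. If $\xi$ is a regulous $\F$-vector subbundle of $\tbundle$, then its restriction $\xi\vert_{X\cap C}$ is a regulous $\F$-vector bundle on $X\cap C$ and a subbundle of $\tbundle\vert_{X\cap C}=\varepsilon^n_{X\cap C}(\F)$, by the restriction property of regulous vector bundles recorded in Section 4.

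For (b)$\implies$(a), the central object is the classifying map $\func{f}{X}{\Gg}$ of $\xi$, given by $\{x\}\times f(x)=E_x$. The plan is to prove that $f$ is regulous; once this is known, $\xi=f^*\gamma(\F^n)$ is a regulous $\F$-vector subbundle of $\tbundle$, since $\gamma(\F^n)$ is a regulous subbundle of $\varepsilon^n_{\Gg}(\F)$ and pullbacks of regulous subbundles along regulous maps are again regulous subbundles (as noted just before Proposition \ref{pullback}). To get hold of $f$ concretely, I would use the identification $\Gg\cong\coprod_{r=0}^n\mathbb{M}_r(\F^n)\subset\Mat_n(\F)$ from Subsection \ref{4A}, under which $f$ becomes the matrix-valued map $\func{P}{X}{\Mat_n(\F)}$ sending $x$ to the orthogonal projection of $\F^n$ onto $E_x$. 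By Proposition \ref{in_coordinates}, $f$ is regulous if and only if each entry $\func{P_{ij}}{X}{\F}$ is a regulous function.

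The heart of the argument is to verify regulousness of the entries $P_{ij}$ by a curve criterion: I would apply the results of \cite{KKK} to conclude that a function on $X$ is regulous as soon as its restriction to $X\cap C$ is regulous for every irreducible algebraic curve $C\subset V$. For each such $C$, condition (b) gives that $\xi\vert_{X\cap C}$ is a regulous subbundle, so its classifying map $f\vert_{X\cap C}$ is regulous by Proposition \ref{pullback}, and hence every $P_{ij}\vert_{X\cap C}$ is regulous. Applying the criterion entrywise shows that each $P_{ij}$, and therefore $f$, is regulous. (Continuity of $P$ into $\Mat_n(\F)$ forces the integer $\operatorname{rank}P(x)=\operatorname{trace}P(x)$ to be locally constant, so $f$ lands continuously in the disjoint union $\Gg$ and the local constancy of $\operatorname{rank}\xi$ comes for free.) The main obstacle is precisely this passage from curve-wise to global regulousness: Euclidean continuity cannot in general be detected on algebraic curves, so the step genuinely depends on the curve-rational machinery of \cite{KKK}, to which the whole proof is engineered to reduce; the remaining ingredients—the classifying map, the matrix reformulation, and the identity $\xi=f^*\gamma(\F^n)$—are routine consequences of Section 4.
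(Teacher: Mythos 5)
Your proposal follows the paper's proof essentially verbatim: both implications are handled the same way, with (b)$\implies$(a) reduced to showing the classifying map $f$ is regulous componentwise and then invoking the curve-rational criterion of \cite{KKK} (Theorem 1.7 and Proposition 3.1 there) to pass from curve-wise to global regulousness. The only differences are cosmetic---you use the projection-matrix model of $\Gg$ where the paper uses an arbitrary embedding $\Gg\subset\R^N$---and the paper is slightly more careful in matching the hypotheses of the cited results, noting that $X\cap C$ is either finite or cofinite in the irreducible curve $C$, so that each component function is regular on $X\cap C^0$ for some Zariski open dense $C^0\subset C$.
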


\begin{proof}
It is clear that (a) implies (b). Suppose now that (b) holds. Setting $\xi=(E, p, X)$, we consider the classifying map $f$ of $\xi$. Then we get $\xi=f^*\gamma(\F^n)$, as families of $\F$-vector subspaces of $\F^n$ on $X$. To prove that (a) holds it suffices to show that $f$ is a regulous map. This can be done as follows. We regard $\Gg$ as a Zariski closed subset of $\R^N$ and write
$$f=(f_1,\ldots,f_N)\colon X\rightarrow\Gg\subset\R^N.$$
By Proposition \ref{pullback}, the restriction $f\vert_{X\cap C}\colon X\cap C\rightarrow\Gg$ is a regulous map, hence the restrictions $\func{f_i\vert_{X\cap C}}{\R}$ are regulous functions. In particular, the functions $f_i\vert_{X\cap C}$ are continuous (in the Euclidean topology). Note that there are only two possibilities: either the set $X\cap C$ is finite or the set $C\setminus (X\cap C)$ is finite. In either case, there exists a Zariski open dense subset $C^0\subset C$ such that $X\cap C^0$ is a Zariski locally closed subset of $C$ and the restrictions $f_i\vert_{X\cap C^0}$ are regular functions. It follows from \cite{KKK}*{Theorem 1.7, Proposition 3.1} that the $f_i$ are regulous functions. Thus, the map $f$ is regulous, as required.
\end{proof}

We also have a result of a similar nature for algebraic $\F$-vector bundles. Recall that for any real algebraic variety $W$, we denote by $W^{\text{\upshape ns}}$ its nonsingular locus.

\begin{theorem}\label{on_surfaces}
Let $V$ be a nonsingular affine real algebraic variety, and $\xi$ a family of $\F$-vector subspaces of $\F^n$ on $V$. Then the following conditions are equivalent.
\begin{enumerate}[label={\upshape (\alph*)}]
\item $\xi$ is an algebraic $\F$-vector subbundle of $\tbundle$.
\item For each irreducible real algebraic surface $S\subset V$, the restriction $\xi\vert_{S^{\text{\upshape ns}}}$ is an algebraic $\F$-vector subbundle of $\varepsilon_{S^{\text{\upshape ns}}}^{n}(\F)$.

\end{enumerate}
\end{theorem}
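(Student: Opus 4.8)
The plan is to mimic the proof of Theorem~\ref{on_curves}, with irreducible surfaces in place of irreducible curves and regularity in place of regulousness. The implication (a)$\Rightarrow$(b) should be straightforward: for any irreducible real algebraic surface $S\subset V$, the nonsingular locus $S^{\text{\upshape ns}}$ is a Zariski locally closed subvariety of $V$, so the inclusion $\func{\iota}{S^{\text{\upshape ns}}}{V}$ is a regular map; if $\xi$ is an algebraic $\F$-vector subbundle of $\varepsilon^n_V(\F)$, then $\xi\vert_{S^{\text{\upshape ns}}}=\iota^*\xi$ is an algebraic $\F$-vector subbundle of $\varepsilon^n_{S^{\text{\upshape ns}}}(\F)$, since pullbacks of algebraic subbundles of trivial bundles under regular maps are again of this form.

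For (b)$\Rightarrow$(a) I would pass to the classifying map. Writing $\xi=(E,p,V)$ and letting $\func{f}{V}{\Gg}$ be its classifying map, we have $\xi=f^*\gamma(\F^n)$ as families of $\F$-vector subspaces of $\F^n$. By the characterization of algebraic $\F$-vector bundles recalled in Subsection~\ref{4B}, it suffices to prove that $f$ is a regular map; then $\xi=f^*\gamma(\F^n)$ is automatically an algebraic $\F$-vector subbundle of $\varepsilon^n_V(\F)$. Realizing $\Gg$ as a Zariski closed subset of some $\R^N$, write $f=(f_1,\ldots,f_N)$. For every irreducible surface $S\subset V$, hypothesis (b) says that $\xi\vert_{S^{\text{\upshape ns}}}$ is an algebraic subbundle of $\varepsilon^n_{S^{\text{\upshape ns}}}(\F)$, so its classifying map $f\vert_{S^{\text{\upshape ns}}}$ is regular (again by Subsection~\ref{4B}); hence each $f_i\vert_{S^{\text{\upshape ns}}}$ is a regular function.

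It then remains to upgrade this ``regularity on the nonsingular locus of every surface'' to regularity of each $f_i$ on all of $V$, and this is the step that invokes the results of \cite{KKK}, just as \cite{KKK}*{Theorem 1.7, Proposition 3.1} was invoked in Theorem~\ref{on_curves}. I would first record that $f$ is regulous: when $\dim V=2$ one simply takes $S=V$ (so $S^{\text{\upshape ns}}=V$) and reads off regularity of $f$ directly, while for $\dim V\ge 3$ I would note that every irreducible curve $C\subset V$ lies in the nonsingular locus of some irreducible surface $S\subset V$, whence $f\vert_C=(f\vert_{S^{\text{\upshape ns}}})\vert_C$ is regular, so $\xi\vert_C$ is a regulous (indeed algebraic) subbundle of $\varepsilon^n_C(\F)$, and Theorem~\ref{on_curves} applies to give that $\xi$, and hence $f$, is regulous. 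Granting the surface criterion of \cite{KKK}---that a regulous function on a nonsingular affine variety whose restriction to $S^{\text{\upshape ns}}$ is regular for every surface $S$ is itself regular---each $f_i$ is regular, so $f$ is regular and $\xi=f^*\gamma(\F^n)$ is an algebraic $\F$-vector subbundle of $\varepsilon^n_V(\F)$.

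The hard part will be precisely this last upgrade. Unlike the curve case, restriction to all curves does not suffice to detect regularity: the standard function $x^3/(x^2+y^2)$ on $\R^2$ is regular on every line through the origin yet is not regular, so two-dimensional test varieties are genuinely necessary. Thus the substance of the theorem lies in quoting from \cite{KKK} the correct statement that regularity is detected on surfaces and in verifying that its hypotheses match the data $f_i\vert_{S^{\text{\upshape ns}}}$ produced above; a secondary technical point, needed only to establish regulousness when $\dim V\ge 3$, is the geometric fact that a curve in a nonsingular affine variety lies on a surface that is nonsingular along it.
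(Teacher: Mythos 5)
Your proposal takes essentially the same route as the paper: (a)$\Rightarrow$(b) is immediate, and for (b)$\Rightarrow$(a) one passes to the classifying map $\func{f}{V}{\Gg}\subset\R^N$, notes that each $f_i\vert_{S^{\text{\upshape ns}}}$ is regular, and concludes by the surface criterion of \cite{KKK}. The one divergence is your preliminary detour establishing that $f$ is regulous via Theorem \ref{on_curves}: the paper invokes \cite{KKK}*{Theorem 6.2} directly, with no a priori regulousness or continuity hypothesis on the $f_i$, so that detour---and with it the unproved geometric lemma that every irreducible curve in $V$ lies in the nonsingular locus of some irreducible surface---is unnecessary.
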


\begin{proof}
It is clear that (a) implies (b). Suppose now that (b) holds. Arguing as in the proof of Theorem \ref{on_curves}, we need only show that the classifying map 
$$\func{f}{V}{\Gg\subset\R^N}$$
 is regular by considering its component functions $f_i$. By assumption, the restriction $\func{f\vert_{S^{\text{\upshape ns}}}}{S^{\text{\upshape ns}}}{\Gg}$ is a regular map (cf. \cite{BCR}*{Sections 12.1, 12.6, 13.3}), hence the restrictions $f_i\vert_{S^{\text{\upshape ns}}}$ are regular functions. By \cite{KKK}*{Theorem 6.2}, the $f_i$ are regular functions and so is $f$, as required.
\end{proof}

Our final result does not depend on \cite{KKK}, but it fits into the framework of this section.

Let $V$ be a nonsingular affine real algebraic variety. If $\xi$ is a regulous (resp. an algebraic) $\F$-vector subbundle of $\tbundle$, then its total space $E(\xi)$ is a regulous (resp. a nonsingular real algebraic) subvariety of $V\times \F^n$. This observation provides motivation for the following.

\begin{prop}
let $V$ be a nonsingular affine real algebraic variety and let $\xi$ be a regulous $\F$-vector subbundle of $\varepsilon_V^n(\F)$. Assume that the total space $E(\xi)$ is a $\smooth$ submanifold of $V\times\F^n$. Then $\xi$ is an algebraic $\F$-vector subbundle of $\varepsilon_V^n(\F)$.
\end{prop}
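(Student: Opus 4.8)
The plan is to prove that the classifying map of $\xi$ is regular. By Proposition~\ref{pullback}, the map $\func{f}{V}{\Gg}$ determined by $\{x\}\times f(x)=E(\xi)_x$ is regulous and $\xi=f^*\gamma(\F^n)$; under the identification of $\Gg$ with $\{A\in\Mat_n(\F):A^2=A=A^*\}$ from Subsection~\ref{4A}, $f$ is just the map $x\mapsto P(x)$, where $P(x)$ is the orthogonal projection onto $E(\xi)_x$. Since $\xi=f^*\gamma(\F^n)$, it suffices to show that $f$ (equivalently, each entry of $P$) is regular: then $E(\xi)=\{(x,v)\in V\times\F^n:P(x)v=v\}$ is Zariski closed and $\xi$ is an algebraic $\F$-vector subbundle of $\varepsilon_V^n(\F)$, as recalled in Subsection~\ref{4B}.

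First I would use the hypothesis that $E(\xi)$ is a $\smooth$ submanifold to show that $P$ is of class $\smooth$. The projection $\func{\pi}{E(\xi)}{V}$ is a $\smooth$ submersion: at a point $(x_0,0)$ of the zero section, $E(\xi)$ contains both the zero section $V\times\{0\}$ and the linear fibre $\{x_0\}\times E(\xi)_{x_0}$, so $T_{(x_0,0)}E(\xi)=(T_{x_0}V\times\{0\})\oplus(\{0\}\times E(\xi)_{x_0})$ and $d\pi_{(x_0,0)}$ is surjective; by lower semicontinuity of rank $\pi$ is a submersion near the whole zero section, and since the fibrewise scalings $(x,v)\mapsto(x,tv)$ (for $t\neq 0$) are diffeomorphisms of $E(\xi)$ commuting with $\pi$, $\pi$ is in fact a submersion at every point. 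Fixing $x_0$ (on a Euclidean neighbourhood the rank of $\xi$ is a constant $r$) and a basis $v_1,\dots,v_r$ of $E(\xi)_{x_0}$, I lift the points $(x_0,v_i)$ through $\pi$ to $\smooth$ local sections $x\mapsto(x,s_i(x))$, which remain $\F$-linearly independent near $x_0$; then, writing $S(x)=[\,s_1(x)\,|\cdots|\,s_r(x)\,]$, the formula $P(x)=S(x)(S(x)^*S(x))^{-1}S(x)^*$ exhibits $P$ as a $\smooth$ function near $x_0$. As $x_0$ is arbitrary, $P$ is of class $\smooth$ on $V$.

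Next I would upgrade $\smooth$ to regular. Since $V$ is nonsingular, Proposition~\ref{kollar-nowak} shows that each entry $f_i$ of $P$ is a rational function on $V$, regular on a Zariski open dense subset $V^0$; together with the previous step, each $f_i$ is a rational function of class $\smooth$. The remaining, and main, point is the classical fact that on a nonsingular variety a rational function of class $\smooth$ is regular. I would prove this pointwise: at $x_0$ write the germ $f_i=p/q$ with $p,q$ coprime in the regular local ring $\reg_{V,x_0}$. From $f_iq=p$ (valid everywhere by continuity) and the existence of Taylor series one gets $\hat q\mid\hat p$ in the completion $\widehat{\reg}_{V,x_0}\cong\R[[t_1,\dots,t_d]]$; faithful flatness of completion gives $(q)\widehat{\reg}_{V,x_0}\cap\reg_{V,x_0}=(q)$, whence $q\mid p$ in $\reg_{V,x_0}$, and coprimality in the UFD $\reg_{V,x_0}$ forces $q$ to be a unit, i.e.\ $f_i$ is regular at $x_0$. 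Therefore $P$ is regular, and $\xi$ is algebraic.

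The main obstacle is this last step: once $P$ is known to be $\smooth$, everything reduces to the statement that a $\smooth$ rational function on a nonsingular variety is regular, and the substance lies in the completion/faithful-flatness argument (or in locating a suitable reference for it). By contrast, the submersion argument in the second paragraph is routine, although it must be handled with a little care at points off the zero section, which is why the fibrewise scaling symmetry is invoked to propagate the submersion property from the zero section to all of $E(\xi)$.
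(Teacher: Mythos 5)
Your proof is correct and follows the same overall strategy as the paper's: first use the submanifold hypothesis to show the classifying map $\func{f}{V}{\Gg}$ is of class $\smooth$, then combine this with Proposition \ref{kollar-nowak} (via Proposition \ref{pullback}) to upgrade regulous to regular. The difference lies in the final step: the paper simply cites \cite{KUCH}*{Proposition 2.1} for the fact that a $\smooth$ rational (regulous) map on a nonsingular variety is regular, whereas you prove this from scratch by passing to the completion $\widehat{\reg}_{V,x_0}\cong\R[[t_1,\dots,t_d]]$, using the Taylor expansion to get $\hat q\mid\hat p$ and faithful flatness to descend divisibility to the UFD $\reg_{V,x_0}$. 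That argument is sound (it is essentially the standard proof of the cited proposition) and makes the write-up self-contained at the cost of some commutative-algebra overhead; the only point to make explicit is that the identity $f_iq=p$ propagates from the dense set $\{q\neq 0\}$ by continuity because $\reg_{V,x_0}$ is a domain, so $\sZ(q)$ is Euclidean nowhere dense near $x_0$. In the first part you are also somewhat more careful than the paper: the paper deduces that $p(\xi)$ is a submersion from the smoothness of the zero section, which literally only gives surjectivity of the differential along the zero section, and your fibrewise-scaling argument is exactly what is needed to propagate the submersion property to all of $E(\xi)$ before invoking the constant rank theorem to produce the local $\smooth$ frames $s_1,\dots,s_r$ and the projection formula $P(x)=S(x)(S(x)^*S(x))^{-1}S(x)^*$.
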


\begin{proof}
We claim that $\xi$ is an $\F$-vector subbundle of $\varepsilon_V^n(\F)$ of class $\smooth$. Indeed, since the map $V\rightarrow E(\xi)$, given by $x\mapsto (x,0)$, is of class $\smooth$, the map $\func{p(\xi)}{E(\xi)}{V}$ is a $\smooth$ submersion. Thus, in view of the constant rank theorem, for each point $x\in V$, one can find an open neighborhood (in the Euclidean topology) $U\subset V$ of $x$ and $\smooth$ maps $\func{s_1,\ldots,s_r}{U}{E(\xi)}$ such that $p(\xi)(s_i(y))=y$ and the vectors $s_i(y)$ form a basis of the $\F$-vector space $E(\xi)_y$ for every point $y\in U$. The claim easily follows.

The claim implies that the classifying map $\func{f}{V}{\Gg}$ of $\xi$ is of class $\smooth$. Furthermore, $f$ is a regulous map by Proposition \ref{pullback}, and hence there exists a Zariski open dense subset $X^0\subset X$ such that the restriction $f\vert_{X^0}$ is a regular map. According to \cite{KUCH}*{Proposition 2.1}, $f$ is a regular map. The proof is complete since $\xi=f^*\gamma(\F^n).$
\end{proof}

\section{Regulous versus topological vector bundles}

In this section we generalize to affine regulous varieties some results of \cite{KK} on relationships between stratified-algebraic and topological vector bundles on affine real algebraic varieties. Such generalizations are not quite routine in general. As noted in Remark \ref{stratified=regulous}, stratified-algebraic vector bundles can be identified with regulous vector bundles.

Henceforth, $\F$ will stand for $\R$, $\C$ or $\Kw$. Any regulous $\F$-vector bundle $\xi$ on an affine regulous variety $X$ can be also viewed as a topological $\F$-vector bundle, which is indicated by $\xi^{\text{top}}$. When topological vector bundles are considered, we always regard $X$ as a topological space endowed with the Euclidean topology. We say that a topological $\F$-vector bundle on $X$ \textit{admits a regulous structure} if it is isomorphic to $\xi^{\text{top}}$ for some $\xi$ as above. We denote by $\VB_{\F\text{-reg}}(X)$ and $\VB_{\F}(X)$ the monoids of isomorphism classes (in the appropriate category) of, respectively, regulous and topological $\F$-vector bundles on $X$. Furthermore, we let $K_{\F\text{-reg}}(X)$ and $K_\F(X)$ denote the corresponding Grothendieck groups.

\begin{theorem}\label{forgetful}
Let $X$ be an affine regulous variety which is compact in the Euclidean topology. Then the homomorphisms
$$\VB_{\F\text{\upshape-reg}}(X)\rightarrow\VB_{\F}(X)\text{ , }K_{\F\text{\upshape-reg}}(X)\rightarrow K_{\F}(X)$$
induced by the correspondence $\xi\mapsto\xi^{\text{\upshape top}}$ are injective. Furthermore, a topological $\F$-vector bundle admits a regulous structure if and only if its class in $K_{\F}(X)$ belongs to the image of the second isomorphism
\end{theorem}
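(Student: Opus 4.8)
The engine of the proof is the following approximation lemma: \emph{if $X$ is compact in the Euclidean topology and $\theta$ is a regulous $\F$-vector bundle on $X$, then every continuous section of $\theta^{\mathrm{top}}$ can be uniformly approximated by regulous sections of $\theta$}. To prove it I would use Theorem \ref{bundles_main}(2) to assume that $\theta$ is a regulous $\F$-vector subbundle of $\varepsilon^m_X(\F)$, so that a continuous section is a continuous map $\sigma\colon X\to\F^m$ with $\sigma(x)\in E(\theta)_x$ for all $x$. Since $X$ is compact, Stone--Weierstrass (applied to each real coordinate) shows that regular, hence regulous, maps $X\to\F^m$ are uniformly dense in the continuous ones; pick a regulous $g$ close to $\sigma$. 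Let $\pi\colon\varepsilon^m_X(\F)\to\theta$ be the orthogonal projection, which is regulous by Proposition \ref{direct_sum}. As each $\pi(x)$ has operator norm $\le 1$ and fixes $\sigma(x)$, the map $x\mapsto\pi(x)(g(x))$ is a regulous section of $\theta$ with $\lVert\pi(x)(g(x))-\sigma(x)\rVert\le\lVert g(x)-\sigma(x)\rVert$ uniformly small, as required.

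Injectivity of $\VB_{\F\text{-reg}}(X)\to\VB_\F(X)$ is then immediate. If $\xi,\eta$ are regulous with $\xi^{\mathrm{top}}\cong\eta^{\mathrm{top}}$, form the regulous bundle $\Hom(\xi,\eta)$ of Remark \ref{hom_bundle}; a topological isomorphism is a continuous section landing in the fibrewise-open locus of invertible maps. By compactness this locus contains a uniform neighborhood of the image, so a sufficiently close regulous approximation is still fibrewise invertible and defines a bijective regulous morphism $\xi\to\eta$, an isomorphism by Proposition \ref{bijective}. The injectivity of $K_{\F\text{-reg}}(X)\to K_\F(X)$ follows formally: on a compact space $K_\F(X)$ is the group completion of $\VB_\F(X)$, so, since every topological bundle on a compact space is a direct summand of a trivial one, $[\xi]=[\eta]$ there forces $\xi^{\mathrm{top}}\oplus\varepsilon^k_X(\F)\cong\eta^{\mathrm{top}}\oplus\varepsilon^k_X(\F)$ for some $k$; the $\VB$-injectivity just proved yields a regulous isomorphism $\xi\oplus\varepsilon^k_X(\F)\cong\eta\oplus\varepsilon^k_X(\F)$, whence $[\xi]=[\eta]$ in $K_{\F\text{-reg}}(X)$.

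For the final equivalence, one direction is trivial: if $\zeta\cong\xi^{\mathrm{top}}$ then $[\zeta]$ lies in the image. Conversely, write $[\zeta]=[\alpha^{\mathrm{top}}]-[\beta^{\mathrm{top}}]$ for regulous $\alpha,\beta$. Choosing by Theorem \ref{bundles_main}(4) a regulous $\beta'$ with $\beta\oplus\beta'$ trivial and stabilizing, I absorb the negative part into a trivial summand to produce a regulous bundle $\mu$ and an integer $N$ with $\mu^{\mathrm{top}}\cong\zeta\oplus\varepsilon^N_X(\F)$. This isomorphism supplies $N$ everywhere linearly independent continuous sections of $\mu^{\mathrm{top}}$; approximating them by regulous sections via the lemma and using that independence is an open condition, I obtain $N$ everywhere independent regulous sections spanning a trivial regulous subbundle $\tau\cong\varepsilon^N_X(\F)$ of $\mu$. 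By Proposition \ref{splitting}, $\mu\cong\tau\oplus\nu$ for a regulous $\nu$. If the approximation is close enough, the straight-line homotopy between the original framing and the regulous framing stays within everywhere independent sections (uniformly on the compact $X\times[0,1]$), so the two trivial subbundles are homotopic and their complements are isomorphic: $\nu^{\mathrm{top}}\cong\zeta$. Thus $\nu$ is a regulous structure on $\zeta$.

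The only genuinely non-formal ingredient, and the main obstacle, is the approximation lemma together with the uniform estimates on invertibility, linear independence, and the framing homotopy that let ``sufficiently close'' do its work; each of these requires the compactness of $X$, which is precisely where the hypothesis enters.
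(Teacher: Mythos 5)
Your argument is correct, but it takes a genuinely different route from the paper. The paper's proof is a three-line reduction: it invokes Theorem \ref{s-s} to identify regulous $\F$-vector bundles with finitely generated projective $\reg^0(X,\F)$-modules, Swan's 1962 theorem to identify topological $\F$-vector bundles with finitely generated projective $\sC(X,\F)$-modules, observes that $\reg^0(X,\F)$ is dense in the topological ring $\sC(X,\F)$ by Weierstrass approximation, and then cites Swan's density theorem \cite{SWN2}*{Theorem 2.2}, which delivers all three assertions at once. What you have done is essentially unpack the proof of Swan's density theorem at the level of bundles: your section-approximation lemma (orthogonal projection of a regular approximant back onto a regulous subbundle of $\varepsilon^m_X(\F)$, using Theorem \ref{bundles_main}(2) and Proposition \ref{direct_sum}) plays the role of the density hypothesis, and the openness-plus-compactness arguments for fibrewise invertibility, linear independence, and the straight-line homotopy of framings reproduce the module-theoretic perturbation arguments. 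Each step checks out: the $\VB$-injectivity via a regulous approximation of a section of $\Hom(\xi,\eta)$ landing in the invertible locus (Proposition \ref{bijective} closes it), the $K$-injectivity by stabilization, and the image characterization by absorbing the negative part via Theorem \ref{bundles_main}(4), splitting off an approximated trivial subbundle via Proposition \ref{splitting}, and comparing complements of homotopic monomorphisms. Your approach is longer but self-contained modulo standard topological $K$-theory, bypasses the Serre--Swan correspondence of Theorem \ref{s-s} entirely, and is in fact close in spirit to the technique the paper itself deploys later in the proof of Theorem \ref{6m1}; the paper's approach buys brevity at the cost of outsourcing the real work to \cite{SWN2}.
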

\begin{proof}
According to Theorem \ref{s-s}, regulous $\F$-vector bundles on $X$ can be interpreted as finitely generated projective $\reg^0(X,\F)$-modules. In turn, by Swan's theorem \cite{SWN}, topological $\F$-vector bundles on $X$ can be interpreted as finitely generated projective modules over the ring $\sC(X, \F)$ of continuous (in the Euclidean topology) $\F$-valued functions on $X$. Note that $\sC(X, \F)$ is a topological ring with the topology induced by the supremum norm. The subring $\reg^0(X,\F)$ is dense in $\sC(X, \F)$ in view of the Weierstrass approximation theorem. Now, it is sufficient to invoke Swan's theorem \cite{SWN2}*{Theorem 2.2}.
\end{proof}
\begin{remark}
The homomorphisms in Theorem \ref{forgetful} need not be surjective, cf. \cite{KK}*{Example 7.10} and \cite{KUCH5}*{Theorem 1.7}. The reader may consult \cites{KUCH5, KK, KK2} for results and conjectures on the images of these homomorphisms.
\end{remark}
In the rest of this section, it will always be clear from the context when a given regulous vector bundle $\xi$ is viewed as a topological vector bundle, without writing $\xi^\text{top}$.

We will consider continuous maps between regulous varieties, always referring to the Euclidean topology. The following basic approximation result will be useful.

\begin{lemma}\label{approximation}
Let $X$ be an affine regulous variety, $A\subset X$ a closed regulous subvariety, and $\func{f}{X}{\R}$ a continuous function whose restriction $f\vert_A$ is a regulous function. Assume that $X$ is compact in the Euclidean topology. Then for every $\varepsilon>0$ there exists a regulous function $\func{g}{X}{\R}$ such that $g\vert_A=f\vert_A$ and
$$\lvert f(x)-g(x)\rvert<\varepsilon\text{ for all }x\in X.$$
\end{lemma}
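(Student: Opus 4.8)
The plan is to reduce the statement to approximating a continuous function that \emph{already} vanishes on $A$, and then to produce such an approximation by multiplying a crude regulous approximation by a regulous cutoff vanishing exactly on $A$.

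First I would realize $X$ as a constructible closed subset of an affine real algebraic variety $V$; since $A$ is closed in the constructible topology, it is then a constructible closed subset of $V$ as well. As $f\vert_A$ is regulous, Proposition \ref{Tietze} furnishes a regulous function $\func{F}{V}{\R}$ with $F\vert_A=f\vert_A$; restricting it to $X$ (Proposition \ref{basic_functions} \ref{l1:rest}) and setting $h=f-F$, we obtain a continuous function on $X$ with $h\vert_A=0$. It then suffices to find, for every $\delta>0$, a regulous function $\func{g_0}{X}{\R}$ with $g_0\vert_A=0$ and $\lVert h-g_0\rVert_\infty<\delta$: the function $g=F+g_0$ is regulous, satisfies $g\vert_A=f\vert_A$, and obeys $\lvert f-g\rvert=\lvert h-g_0\rvert<\delta$.

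To build $g_0$ I would use Proposition \ref{closed=zero} to obtain a regulous function $\phi$ on $V$ (restricted to $X$) with $\sZ(\phi)=A$, and form the regulous cutoff $\psi_c=\phi^2/(\phi^2+c)$ for a parameter $c>0$. This is regulous because $t\mapsto t^2/(t^2+c)$ is a regular function of one real variable, so its composite with the regulous map $\phi$ is regulous by Proposition \ref{composition}; moreover $0\le\psi_c\le 1$ and $\sZ(\psi_c)=A$. By the Weierstrass approximation theorem I would choose a polynomial $p$ (regular, hence regulous on the compact set $X\subset\R^m$) with $\lVert h-p\rVert_\infty<\delta/2$, and set $g_0=\psi_c\,p$, which is regulous and vanishes on $A$.

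The core estimate is $\lvert h-g_0\rvert\le \lvert h\rvert(1-\psi_c)+\psi_c\lvert h-p\rvert$, whose second summand is at most $\delta/2$. The delicate point is to control $\lvert h\rvert(1-\psi_c)=\lvert h\rvert\,c/(\phi^2+c)$ uniformly, and here I would exploit compactness: on the open neighborhood $N=\{\lvert h\rvert<\delta/4\}$ of $A$ this term is automatically $<\delta/4$, while its complement $X\setminus N$ is compact and disjoint from $A$, so $\phi^2\ge m>0$ there and $\lvert h\rvert(1-\psi_c)\le\lVert h\rVert_\infty\,c/m$ falls below $\delta/4$ once $c$ is taken small enough. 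This balancing of the two regimes---$\psi_c$ small near $A$, where $\lvert h\rvert$ is small, and $1-\psi_c$ small away from $A$, where $\phi$ is bounded below---is the only genuine obstacle; everything else is bookkeeping. Taking $\delta=\varepsilon$ yields $\lvert f-g\rvert<3\varepsilon/4<\varepsilon$, completing the argument.
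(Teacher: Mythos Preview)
Your proof is correct and follows a genuinely different route from the paper's. Both arguments begin with the same reduction: extend $f\vert_A$ regulously to $X$ via Proposition~\ref{Tietze} and replace $f$ by $h=f-F$ with $h\vert_A=0$. After that the approaches diverge. The paper first re-embeds $X$ so that $A$ becomes the trace on $X$ of a hyperplane $H$ in $\mathbb{P}^{n+1}(\R)$ (using the regulous equation $\alpha$ of $A$ as an extra coordinate), extends $h$ continuously to the whole projective space while keeping it zero on $H$, and then invokes an external approximation lemma \cite{BK3}*{Lemma~2.1} to produce a \emph{regular} function on $\mathbb{P}^{n+1}(\R)$ vanishing on $H$ and close to this extension. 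Your argument avoids both the projective embedding and the outside citation: you build a regulous cutoff $\psi_c=\phi^2/(\phi^2+c)$ directly from a regulous equation $\phi$ of $A$, take a crude Weierstrass polynomial approximant $p$ of $h$, and set $g_0=\psi_c\,p$; the two-regime estimate (near $A$ use $|h|<\delta/4$, away from $A$ use $\phi^2\ge m>0$ on the compact complement) then gives the required bound. Your approach is more elementary and entirely self-contained within the paper's framework; the paper's approach, by passing through \cite{BK3}, actually yields an approximant that is regular plus a fixed regulous correction, which is a slightly sharper structural conclusion but is not needed for the applications in Section~6.
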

\begin{proof}
We may assume that $X\subset \R^n$ is a constructible closed subset. According to Proposition \ref{closed=zero}, $A=\sZ(\alpha)$ for some regulous function $\func{\alpha}{\R^n}{\R}$. The map
$$e:X\ni x\mapsto(\alpha(x),x)\in\R^{n+1}$$
is a regulous embedding. Hence, by replacing $X$ with $e(X)$, we get
$$X\subset\R^{n+1}\subset V:=\mathbb{P}^{n+1}(\R)\text{ and }A=X\cap H,$$
where $\mathbb{P}^{n+1}(\R)$ is real projective $(n+1)$-space and 
$$H=\{(x_0:\ldots:x_n)\in\mathbb{P}^{n+1}(\R):x_0=0\}.$$
By Proposition \ref{Tietze}, there exists a regulous function $\func{\phi}{V}{\R}$ with $\phi\vert_A=f\vert_A$. In view of Tietze's extension theorem, one can find a continuous function $\func{\psi}{V}{\R}$ satisfying $\psi\vert_X=f-\phi\vert_X$ and $\psi\vert_H=0$. By \cite{BK3}*{Lemma 2.1}, there exists a regular function $\func{\gamma}{V}{\R}$ such that $\gamma\vert_H=0$ and
$$\lvert \psi(x)-\gamma(x)\rvert<\varepsilon\text{ for all }x\in V.$$\upshape
The function $g:=(\gamma+\phi)\vert_X$ has all the required properties.
\end{proof}

Given affine regulous varieties $X$ and $Y$, we denote by $\sC(X,Y)$ the space of all continuous maps from $X$ into $Y$, endowed with the compact-open topology. We concentrate our attention on maps with values in $\Gg$ for some fixed $n$ (cf. Subsection 4B for notation). Lemma \ref{approximation} allows us to generalize \cite{KK}*{Theorem 4.10} as follows.

\begin{theorem}\label{6m1}
Let $X$ be an affine regulous variety, $A\subset V$ a closed regulous subvariety, and $\func{f}{X}{\Gg}$ a continuous map whose restriction $f\vert_A$ is a regulous map. Assume that $X$ is compact in the Euclidean topology. Then the following conditions are equivalent:
\begin{enumerate}[label=\upshape(\alph*)]
\item Each neighborhood of $f$ in $\sC(X,\Gg)$ contains a regulous map 
$$\func{g}{X}{\Gg}\text{ with }f\vert_A=g\vert_A.$$
\item The map $f$ is homotopic to a regulous map $\func{h}{X}{\Gg}$ with $h\vert_A=f\vert_A$.
\item The pullback $\F$-vector bundle $f^*\gamma(\F^n)$ on $X$ admits a regulous structure.
\end{enumerate}
\end{theorem}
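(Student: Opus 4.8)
The three conditions form a cycle of implications, and the plan is to prove (a)$\implies$(b)$\implies$(c)$\implies$(a). The equivalence is a parametric (relative) approximation/homotopy statement for maps into a Grassmannian, and the governing principle is the classical fact that pullbacks of the tautological bundle under homotopic maps are isomorphic, combined with the approximation result Lemma \ref{approximation}. First I would record that $\Gg$ is an affine real algebraic variety (Subsection \ref{4A}), so that a map $X\to\Gg\subset\R^N$ may be studied coordinatewise; this will be the mechanism by which Lemma \ref{approximation} is brought to bear, since that lemma is stated for $\R$-valued functions.

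\textbf{The implication (a)$\implies$(b).} This is essentially formal. If every neighborhood of $f$ contains a regulous map agreeing with $f$ on $A$, choose a neighborhood $\mathcal{U}$ small enough that any map in $\mathcal{U}$ is homotopic to $f$ relative to $A$. Such a neighborhood exists because $\Gg$ is a compact manifold (in fact a smooth affine variety), so it admits a tubular neighborhood in $\R^N$ and hence a local retraction; two maps into $\Gg$ that are uniformly close (on the compact space $X$) are joined by the straight-line homotopy in $\R^N$ followed by the retraction, and this homotopy may be arranged to be constant on $A$ since the two maps already agree there. Taking $h$ to be the regulous map furnished by (a) in $\mathcal{U}$ gives (b).

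\textbf{The implication (b)$\implies$(c).} Here one uses the homotopy invariance of pullback bundles. Since $f$ is homotopic to the regulous map $h$, the topological $\F$-vector bundles $f^*\gamma(\F^n)$ and $h^*\gamma(\F^n)$ are isomorphic as topological bundles. But $h$ is regulous, so by Proposition \ref{pullback} (applied to $h$, or rather to the regulous subbundle $h^*\gamma(\F^n)$ of $\tbundle$) the bundle $h^*\gamma(\F^n)$ is a genuine regulous $\F$-vector bundle on $X$. Thus $f^*\gamma(\F^n)$, being topologically isomorphic to a regulous bundle, admits a regulous structure, which is exactly (c). The only mild point to check is that $h^*\gamma(\F^n)$ as produced by the homotopy carries its natural regulous structure; this is immediate once $h$ is known to be regulous.

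\textbf{The implication (c)$\implies$(a), the hard part.} This is where the real content lies and where I expect the main obstacle. Suppose $f^*\gamma(\F^n)$ admits a regulous structure, say via a regulous $\F$-vector subbundle $\xi$ of $\tbundle$ with a topological isomorphism $\xi^{\text{top}}\cong f^*\gamma(\F^n)$. By Proposition \ref{pullback}, $\xi$ has a classifying map $\func{\varphi}{X}{\Gg}$ which is regulous and satisfies $\xi=\varphi^*\gamma(\F^n)$. The topological isomorphism then says $\varphi$ and $f$ induce isomorphic topological bundles, so $\varphi$ and $f$ are homotopic as continuous maps into $\Gg$. The task is to upgrade this to the relative approximation statement (a): I must produce regulous maps arbitrarily close to $f$ that \emph{agree with $f$ on $A$}. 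The strategy is to write $f$ in coordinates $f=(f_1,\dots,f_N)$ and apply Lemma \ref{approximation} to each component $f_i$, obtaining regulous functions $g_i$ with $g_i|_A=f_i|_A$ and $|f_i-g_i|<\delta$; assembling $g=(g_1,\dots,g_N)$ gives a regulous map $X\to\R^N$ that is uniformly $\delta$-close to $f$ and agrees with $f$ on $A$. The obstacle is that $g$ need not land in $\Gg$. To correct this, I would compose $g$ with a regulous (indeed algebraic or at least regular) retraction $\func{\rho}{\mathcal{N}}{\Gg}$ of a Zariski/Euclidean neighborhood $\mathcal{N}$ of $\Gg$ in $\R^N$ onto $\Gg$; such a retraction exists because $\Gg$ is an affine algebraic variety embedded as a smooth subvariety, and for the Grassmannian it can be given explicitly (for instance, nearest-point projection onto the idempotent-symmetric matrices of the correct trace, which is algebraic). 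For $\delta$ small enough, $g(X)\subset\mathcal{N}$ by compactness, and $\rho\circ g$ is then a regulous map into $\Gg$ that still agrees with $f$ on $A$ (since $\rho$ fixes $\Gg\supset f(A)$) and is uniformly close to $\rho\circ f=f$. As $\delta\to 0$ these maps exhaust every neighborhood of $f$, giving (a). The delicate steps are the existence and regulous regularity of the retraction $\rho$ and the verification that $\rho\circ g$ remains regulous and fixes $A$; closing this loop completes the cycle.
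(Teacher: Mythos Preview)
Your treatment of (a)$\Rightarrow$(b) and (b)$\Rightarrow$(c) is fine and matches the paper's one-line arguments.

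The proposed proof of (c)$\Rightarrow$(a) has a genuine gap: as written, it never uses hypothesis (c). You invoke the regulous bundle $\xi$ and its classifying map $\varphi$ only to observe that $\varphi$ and $f$ are homotopic, and then abandon them; the actual construction (approximate the coordinates $f_i$ by regulous $g_i$ via Lemma~\ref{approximation}, then retract onto $\Gg$) uses only the standing assumption that $f\vert_A$ is regulous. If this worked, it would prove (a) for \emph{every} continuous $f$ with $f\vert_A$ regulous, hence that every topological $\F$-vector bundle on $X$ admits a regulous structure. That is false (see the remark following Theorem~\ref{forgetful}), so the argument must break. The breaking point is the retraction $\rho$: the tubular-neighborhood (nearest-point) retraction onto $\Gg\subset\R^N$ is smooth but \emph{not} regulous, and no regulous retraction of a Euclidean neighborhood onto $\Gg$ exists in general (already for the circle $\Sph^1\subset\R^2$ the map $x\mapsto x/\lvert x\rvert$ involves a square root and is not regulous). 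Thus $\rho\circ g$ is merely continuous, and you have produced nothing.

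The paper's argument uses (c) in an essential way and avoids any retraction onto the Grassmannian. Given a regulous subbundle $\xi\subset\varepsilon_X^m$ and a topological isomorphism $\varphi\colon\xi\to f^*\gamma(\F^n)\subset\varepsilon_X^n$, one obtains a continuous section $\bar w$ of the \emph{trivial} bundle $\Hom(\varepsilon_X^m,\varepsilon_X^n)$ encoding the embedding. Approximation now takes place in a linear space, where Weierstrass and Lemma~\ref{approximation} apply directly, yielding a regulous section $u$ close to $\bar w$ and agreeing with it on $A$ (here one uses that $f\vert_A$ regulous makes the orthogonal projection onto $(f^*\gamma)\vert_A$ regulous, so the relevant restriction is regulous). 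The condition ``$u(x)$ is injective on $E(\xi)_x$'' is open, so it survives, and the map $g$ sending $x$ to the image $u(x)(E(\xi)_x)\in\Gg$ is regulous by the Stiefel-to-Grassmann argument of Subsection~\ref{4A}. In short: the role of (c) is to supply a regulous model $\xi$ whose fibres one can push forward by a regulous linear map into $\F^n$; the Grassmannian value is then \emph{read off} from the image rather than obtained by retraction.
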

\begin{proof}
It is well-known that (a) implies (b). If (b) holds, then so does (c) since the $\F$-vector bundles $f^*\gamma(\F^n)$ and $h^*\gamma(\F^n)$ are topologically isomorphic. It remains to prove that (c) implies (a).

Suppose that (c) holds. To allow for a uniform treatment, given $\F$-vector bundles $\theta_1$ and $\theta_2$, we regard $\Hom(\theta_1, \theta_2)$ as an $\R$-vector bundle (cf. Remark \ref{hom_bundle}). We simplify notation by setting $\gamma=\gamma(\F^n)$ and $\varepsilon^k_X=\varepsilon_X^k(\F)$ for any nonnegative integer $k$. In view of Theorem \ref{bundles_main} (2), we can find a positive integer $m$, a regulous $\F$-vector subbundle $\xi$ of $\varepsilon_X^m$, and a topological isomorphism $\func{\varphi}{\xi}{f^*\gamma}$. Regarding $f^*\gamma$ as a topological subbundle of $\varepsilon_X^n$, we get a continuous section $\func{w}{X}{\Hom(\xi,\varepsilon_X^n)}$ defined by
$$w(x)(e)=\varphi(e)\text{ for }x\in X\text{ and }e\in E(\xi)_x.$$
In particular, $\func{w(x)}{E(\xi)_x}{E(\varepsilon_X^n)_x=\{x\}\times\F^n}$ is an injective $\F$-linear transformation with
$$w(x)(E(\xi)_x)=\{x\}\times f(x).$$
The equality $\varepsilon_X^m=\xi\oplus\xi^\perp$ (the isomorphism of Proposition \ref{direct_sum} is used as identification) and the canonical projection $\func{\rho}{\varepsilon_X^m}{\xi}$ allow us to define a continuous section $\func{\bar{w}}{X}{\Hom(\varepsilon_X^m,\varepsilon_X^n)}$ by
$$\bar{w}(x)(e)=w(x)(\rho(e))\text{ for }x\in X\text{ and }e\in E(\varepsilon_X^m)_x=\{x\}\times\F^m.$$
We identify $\Hom(\varepsilon_X^m,\varepsilon_X^n)$ and $\varepsilon_X^{mn}$ as $\R$-vector bundles. Hence, by the Weierstrass approximation theorem, we can find a regulous section $\func{v}{X}{\Hom(\varepsilon_X^m,\varepsilon_X^n)}$, arbitrarily close to $\bar{w}$ in the compact-open topology. If $\func{\sigma}{\varepsilon_X^n=f^*\gamma\oplus(f^*\gamma)^\perp}{f^*\gamma}$ is the canonical projection  and if $\func{i}{f^*\gamma}{\varepsilon_X^n}$ is the inclusion morphism, then
$$\func{\bar{v}=i\circ\sigma\circ v}{X}{\Hom(\varepsilon_X^m,\varepsilon_X^n)}$$
is a continuous section close to $\bar{w}$. Furthermore, the $\F$-linear transformation
$$\func{\bar{v}(x)}{E(\xi)_x}{E(\varepsilon_X^n)_x=\{x\}\times\F^n}$$ is injective and satisfies
$$\bar{v}(x)(E(\xi)_x)=\{x\}\times f(x)\text{ for all }x\in X.$$
Since the map $f\vert_A$ is regulous, the $\F$-vector bundle $(f\vert_A)^*\gamma=(f^*\gamma)\vert_A$ on $A$ is regulous and the restriction $\func{\sigma_A}{\varepsilon_X^n\vert_A}{(f^*\gamma)\vert_A}$ of $\sigma$ is a regulous morphism. Consequently, the restriction $\func{\bar{v}\vert_A}{A}{\Hom(\varepsilon_X^m,\varepsilon_X^n)}$ is a regulous section. By Lemma \ref{approximation}, there exists a regulous section $\func{u}{X}{\Hom(\varepsilon_X^m,\varepsilon_X^n)}$ that is close to $\bar{v}$ and satisfies $u\vert_A=\bar{v}\vert_A$. Furthermore, the $\F$-linear transformation $$\func{u(x)}{E(\xi)_x}{\{x\}\times\F^n}$$ is injective for all $x\in X$. Then, the map $\func{g}{X}{\Gg}$ defined by
$$u(x)(E(\xi)_x)=\{x\}\times g(x)\text{ for }x\in X$$
is regulous (cf. Subsection 4B), close to $f$, and $g\vert_A=f\vert_A$. Condition (a) follows, which completes the proof.
\end{proof}

Any topological $\F$-vector bundle $\xi$ on an affine regulous variety $X$ can be regarded as an $\R$-vector bundle, which is indicated by $\xi_\R$. In particular, $\xi_\R=\xi$ if $\F=\R$. If the $\F$-vector bundle $\xi$ admits a regulous structure, then so does the $\R$-vector bundle $\xi_\R$.

\begin{theorem}\label{real_bundles}
Let $X$ be an affine regulous variety which is compact in the Euclidean topology. A topological $\F$-vector bundle $\xi$ on $X$ admits a regulous structure if and only if the $\R$-vector bundle $\xi_\R$ admits a regulous structure.
\end{theorem}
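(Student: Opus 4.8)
The implication ``$\xi$ regulous $\Rightarrow$ $\xi_\R$ regulous'' is immediate by restriction of scalars and was already noted before the statement, so the whole content lies in the converse; moreover the case $\F=\R$ is trivial, since then $\xi_\R=\xi$. I would therefore fix $\F=\C$ or $\F=\Kw$ and reformulate an $\F$-structure through endomorphisms: a regulous $\F$-vector bundle with underlying regulous $\R$-vector bundle $\zeta$ is the same datum as $\zeta$ together with a regulous morphism $J\colon\zeta\to\zeta$ with $J^2=-\mathrm{id}$ when $\F=\C$, or a pair $J_1,J_2$ with $J_1^2=J_2^2=-\mathrm{id}$ and $J_1J_2=-J_2J_1$ when $\F=\Kw$ (then $i,j$ act by $J_1,J_2$, and $k$ by $J_1J_2$). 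Starting from a regulous $\R$-vector bundle $\zeta$ and a topological isomorphism $\zeta^{\mathrm{top}}\cong\xi_\R$, which exists by hypothesis, I would transport the scalar action of $\F$ on $\xi$ to continuous endomorphisms $J$ (resp.\ $J_1,J_2$) of $\zeta^{\mathrm{top}}$ satisfying these relations, and then try to deform them into \emph{regulous} endomorphisms satisfying the same relations \emph{exactly}.

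To set up the deformation I would first equip $\zeta$ with a regulous Euclidean inner product; this is available because, by Theorem \ref{bundles_main}(2), $\zeta$ embeds as a regulous subbundle of some $\tbundle$, so one may restrict the standard inner product and use Proposition \ref{direct_sum}. Since every complex (resp.\ quaternionic) structure is homotopic to a metric-compatible one, I may assume the transported endomorphisms are skew-adjoint. I would then approximate these skew-adjoint endomorphisms by regulous skew-adjoint endomorphisms, arbitrarily closely in the compact-open topology, using the density of $\reg^0(X,\R)$ in $\sC(X,\R)$ (the Weierstrass argument already invoked in the proofs of Theorems \ref{forgetful} and \ref{6m1}, together with Lemma \ref{approximation}). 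The resulting regulous $B$ (resp.\ $B_1,B_2$) satisfy the defining relations only approximately: $-B^2$ is regulous, self-adjoint, positive definite and close to $\mathrm{id}$, while $B_1B_2+B_2B_1$ is close to $0$.

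The main obstacle is the final correction step: passing from endomorphisms that satisfy the relations approximately to regulous endomorphisms satisfying them exactly while staying close. Equivalently, one must retract a Euclidean neighborhood of the nonsingular affine variety $\mathcal S_\F=\{A\in\Mat_N(\R):A^2=-\mathrm{id},\ A^*=-A\}$ (resp.\ the analogous variety of quaternionic structures) onto $\mathcal S_\F$ by a \emph{regulous} map and apply it fibrewise. This is genuinely delicate, because the naive nearest-point normalization $B\mapsto B(-B^2)^{-1/2}$ involves a matrix square root and is only real-analytic, not regulous. I would handle this by exploiting that $\mathcal S_\F$ is a homogeneous space under the conjugation action of $\GL_N(\R)$ and constructing a regulous retraction adapted to a stratification of a neighborhood, in the spirit of the corresponding step for stratified-algebraic bundles in \cite{KK}; alternatively, via the identification of regulous with stratified-algebraic bundles (Remark \ref{stratified=regulous}) one can try to reduce the statement to its real-algebraic counterpart established there.

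Once a regulous $\F$-structure $J'$ (resp.\ $J_1',J_2'$) close to the topological one has been produced, the proof concludes quickly. Being close, $J'$ is homotopic through $\F$-structures to $J$, so the regulous $\F$-vector bundle $(\zeta,J')$ has underlying topological $\F$-vector bundle isomorphic to $\xi$. Hence $\xi$ admits a regulous structure, which is precisely what is required.
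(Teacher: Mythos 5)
There is a genuine gap, and it sits exactly where you flag it: the correction step. Your setup (encoding an $\F$-structure by a regulous endomorphism $J$ with $J^2=-\mathrm{id}$, choosing a regulous metric via Theorem \ref{bundles_main}(2), and approximating the topological $J$ by a regulous skew-adjoint $B$) is sound, but the passage from $B^2\approx-\mathrm{id}$ to an exactly involutive regulous $J'$ is the entire difficulty, and neither of your two proposed fixes works as stated. A ``regulous retraction of a Euclidean neighborhood of $\mathcal S_\F$'' is not even a well-formed object in this framework: regulous maps are defined on constructible sets, and a Euclidean tube around $\mathcal S_\F$ is semialgebraic but not constructible; moreover the natural candidate $B\mapsto B(-B^2)^{-1/2}$ involves a matrix square root, which is real-analytic but not rational on any Zariski-dense open set, so there is no evident regulous retraction to adapt. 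Your fallback --- reducing through the identification of Remark \ref{stratified=regulous} to \cite{KK}*{Theorem 1.7} --- also fails as stated, because that theorem is available only when the base is an affine real algebraic variety, whereas here $X$ is a constructible closed subset; the paper explicitly warns that the proof ``cannot be reduced directly to the already known case and requires some additional preparation.''

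That additional preparation is what your proposal is missing, and it is the actual content of the paper's argument. The paper does not reprove the $J$-operator statement at all: it takes \cite{KK}*{Theorem 1.7} as a black box for affine real algebraic varieties and extends it to regulous varieties by induction on $\dim V$, where $V$ is the Zariski closure of $X$ in projective space. One resolves the singularities of $V$ by a multiblowup $\pi\colon V'\to V$ over $W=\mathrm{Sing}(V)$ (Notation \ref{multiblowup}); the induction hypothesis handles $\xi\vert_{X\cap W}$, the known real algebraic case handles $\bar\pi^*\xi$ on the nonsingular affine variety $V'$, and Lemma \ref{6l1} (which rests on the approximation Theorem \ref{6m1} and the properness of $\pi$) glues these two pieces back into a regulous structure on $\xi$ itself. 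If you want to salvage your direct approach, you would first have to supply a complete proof of the correction step --- essentially reproving \cite{KK}*{Theorem 1.7} --- and then still carry out a descent of this kind to pass from real algebraic to regulous base varieties.
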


If $X$ is an affine real algebraic variety, then Theorem \ref{real_bundles} coincides with \cite{KK}*{Theorem 1.7}. The latter result was rather unexpected and played the key role in the subsequent papers \cites{KUCH5, KK2}. The proof of Theorem \ref{real_bundles} cannot be reduced directly to the already known case and requires some additional preparation.

Let $V$ be an affine real algebraic variety and $W\subset V$ a Zariski closed subvariety. A \textit{multiblowup} $\func{\pi}{V'}{V}$ of $V$ is a regular map which is the composition of a finite sequence of blowups with nonsingular centers. In particular, $V'$ is an affine real algebraic variety, and $\pi$ is a proper map in the Euclidean topology. We say that the multiblowup $\pi$ is \textit{over} $W$ if the restriction $\func{\pi_W}{V'\setminus\pi^{-1}(W)}{V\setminus W}$ of $\pi$ is a biregular isomorphism.

Multiblowups allow us to replace in some arguments affine regular varieties with affine real algebraic varieties. We first describe a general setup.

\begin{notation}\label{multiblowup}
Let $V$ be an affine real algebraic variety and let $X\subset V$ be a constructible closed subset that is Zariski dense in $V$. According to Hironaka's resolution of singularities theorem \cite{HIR} (cf. also \cite{KOL}), there exists a multiblowup $\func{\pi}{V'}{V}$ over the singular locus $W=\text{Sing}(V)$ of $V$, with $V'$ nonsingular. Note that $\pi(V')$ is the Euclidean closure of $V^{\text{ns}}=V\setminus\text{Sing}(V)$ in V. Furthermore, $\pi(V')\subset X$ by Proposition \ref{nonsingular}. We let $\func{\bar{\pi}}{V'}{X}$ denote the regulous map defined by $\bar{\pi}(z)=\pi(z)$ for $z\in V'$
\end{notation}

\begin{lemma}\label{6l1}
Preserving Notation \ref{multiblowup}, assume that $V$ is compact in the Euclidean topology. Let $\xi$ be a topological $\F$-vector bundle on $X$. Then the following conditions are equivalent:
\begin{enumerate}[label=\upshape(\alph*)]
\item The $\F$-vector bundle $\xi$ admits a regulous structure.
\item The $\F$-vector bundles $\xi\vert_{X\cap W}$ on $X\cap W$ and $\bar{\pi}^*\xi$ on $V'$ admit regulous structures
\end{enumerate}
\end{lemma}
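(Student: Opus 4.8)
The plan is to treat the two implications separately; the forward one is formal and the reverse one carries all the content. For (a)$\Rightarrow$(b), suppose $\xi$ is topologically isomorphic to $\zeta^{\text{top}}$ for some regulous $\F$-vector bundle $\zeta$ on $X$. Since $X\cap W$ is a closed regulous subvariety of $X$, the restriction $\zeta|_{X\cap W}$ is a regulous bundle with underlying topological bundle $\xi|_{X\cap W}$; and since $\bar\pi$ is a regulous map, the pullback $\bar\pi^*\zeta$ is a regulous bundle on $V'$ with underlying topological bundle $\bar\pi^*\xi$. Hence both bundles in (b) admit regulous structures.

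For (b)$\Rightarrow$(a), I would first fix, using compactness of $X$, a continuous classifying map $\func{f}{X}{\Gg}$ with $f^*\gamma(\F^n)\cong\xi$ for a suitable $n$; write $\gamma=\gamma(\F^n)$. By Proposition \ref{nonsingular}, applied to the Zariski-dense constructible closed set $X$, one has $V^{\text{ns}}\subset X$, so $X$ is the disjoint union of the Zariski-open set $V^{\text{ns}}$ and the Zariski-closed set $X\cap W$. Setting $W'=\pi^{-1}(W)$, the map $\bar\pi$ restricts to a biregular isomorphism $V'\setminus W'\to V^{\text{ns}}$ and sends $W'$ into $X\cap W$. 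Next, invoking the regulous structure on $\xi|_{X\cap W}$, Theorem \ref{6m1} (applied on the compact affine regulous variety $X\cap W$ with empty subvariety) shows that $f|_{X\cap W}$ is homotopic to a regulous map; extending this homotopy over $X$ by the homotopy extension property for the closed inclusion $X\cap W\hookrightarrow X$ and replacing $f$ by its other end, I may assume from the start that $f|_{X\cap W}$ is regulous, without altering the topological types of $\xi$ or $\bar\pi^*\xi$.

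The core of the argument is to transport the regulous structure down from $V'$. Since $V'$ is a nonsingular affine real algebraic variety, compact because $\pi$ is proper and $V$ is compact, $W'$ is a Zariski closed subvariety, the restriction $(f\circ\bar\pi)|_{W'}=(f|_{X\cap W})\circ(\bar\pi|_{W'})$ is regulous, and $(f\circ\bar\pi)^*\gamma\cong\bar\pi^*\xi$ admits a regulous structure by hypothesis, Theorem \ref{6m1} yields a regulous map $\func{g'}{V'}{\Gg}$ homotopic to $f\circ\bar\pi$ with $g'|_{W'}=(f\circ\bar\pi)|_{W'}$. The decisive point is that this rel-$W'$ condition makes $g'$ constant on the fibres of $\bar\pi$: over $V^{\text{ns}}$ the fibres are single points, while for $v\in X\cap W$ every $z\in\bar\pi^{-1}(v)$ lies in $W'$, where $g'(z)=f(\bar\pi(z))=f(v)$. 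Consequently $g'$ descends to a map $\func{g}{X}{\Gg}$ with $g\circ\bar\pi=g'$ and $g|_{X\cap W}=f|_{X\cap W}$; the two prescriptions agree on the overlap $\pi(V')\cap(X\cap W)=\pi(W')$.

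The remaining task, which I expect to be the main obstacle, is to check that $g$ is continuous and regulous and reproduces $\xi$. Continuity follows from the pasting lemma together with the fact that $\bar\pi\colon V'\to\pi(V')$ is a closed surjection, hence a quotient map, both spaces being compact Hausdorff. For regulousness I would use the decomposition $X=V^{\text{ns}}\sqcup(X\cap W)$: on $V^{\text{ns}}$ one has $g=g'\circ(\bar\pi|_{V'\setminus W'})^{-1}$, which is regulous because $\bar\pi|_{V'\setminus W'}$ is biregular and $g'$ is regulous, while $g|_{X\cap W}=f|_{X\cap W}$ is regulous by construction; amalgamating a stratification of $V^{\text{ns}}$ adapted to $g$ with one of $X\cap W$ adapted to $g$ produces a stratification of $X$ on whose strata $g$ is regular, so $g$ is regulous. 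Finally, the homotopy $g'\simeq f\circ\bar\pi$ rel $W'$ is itself constant on the fibres of $\bar\pi$ over $X\cap W$, so it descends in the same manner to a homotopy $g\simeq f$ on $X$; hence $g^*\gamma\cong f^*\gamma\cong\xi$, and since $g$ is regulous, $g^*\gamma(\F^n)$ is a regulous structure on $\xi$.
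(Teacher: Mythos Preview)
Your argument follows the same strategy as the paper's: classify $\xi$ by a continuous map $f$, use Theorem~\ref{6m1} on $X\cap W$ together with homotopy extension to arrange that $f|_{X\cap W}$ is regulous, apply Theorem~\ref{6m1} on $V'$ relative to $W'=\pi^{-1}(W)=\pi^{-1}(X\cap W)$, and push the resulting regulous map down to $X$ along $\bar\pi$. Your verification that the descended map is continuous and regulous is more explicit than the paper's, and correct.

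The one substantive divergence is that you invoke condition~(b) of Theorem~\ref{6m1} where the paper uses condition~(a), and this creates a small gap. As stated, Theorem~\ref{6m1}(b) only asserts that $g'$ is homotopic to $f\circ\bar\pi$ and that $g'|_{W'}=(f\circ\bar\pi)|_{W'}$; it does \emph{not} say the homotopy is rel~$W'$. You need the rel~$W'$ condition so that each stage of the homotopy is constant on the fibres of $\bar\pi$ over $X\cap W$ and hence descends to $X$. The repair is immediate: use condition~(a) instead to obtain $g'$ \emph{close} to $f\circ\bar\pi$ with $g'|_{W'}=(f\circ\bar\pi)|_{W'}$, and then either note that the standard short homotopy in $\Gg$ between sufficiently close maps agreeing on $W'$ is rel~$W'$, or---as the paper does---drop the homotopy descent altogether and observe directly that properness of $\pi$ forces your $g$ to be close to $f$ on all of $X$, hence homotopic to it.
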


\begin{proof}
It is clear that (a) implies (b).

Suppose that (b) holds. Since $X$ is compact in the Euclidean topology, we may assume that $\xi$ is of the form $\xi=f^*\gamma(\F^n)$ for some $n>0$, where $\func{f}{X}{\Gg}$ is a continuous map. In particular, $\xi\vert_{V\cap W}=(f\vert_{X\cap W})^*\gamma(\F^n)$. By Theorem \ref{6m1} (with $A=\emptyset$), the restriction $f\vert_{X\cap W}$ is homotopic to a regulous map $\func{f_0}{X\cap W}{\Gg}$. Recall that $(X, X\cap W)$ is a polyhedral pair, cf. \cite{BCR}*{Theorem 9.2.1}. Thus, in view of the homotopy extension theorem, one can find a continuous map $\func{\phi}{X}{\Gg}$ that is homotopic to $f$ and satisfies $\phi\vert_{X\cap W}=f_0$. The $\F$-vector bundles $f^*\gamma(\F^n)$ and $\phi^*\gamma(\F^n)$ are topologically isomorphic, so the proof is reduced to the case in which $f\vert_{X\cap W}$ is a regulous map.

The subset $A=\pi^{-1}(X\cap W)\subset V'$ is constructible closed, and the restriction $(f\circ\bar{\pi})\vert_A$ of $\pi$ is a regulous map. Furthermore,
$$(f\circ\bar{\pi})^*\gamma(\F^n)=\bar{\pi}^*(f^*\gamma(\F^n))=\bar{\pi}^*\xi$$
By Theorem \ref{6m1}, one can find a regulous map $\func{F}{V'}{\Gg}$ that is arbitrarily close to $f\circ\bar{\pi}$ in $\sC(X,\Gg)$ and satisfies $F\vert_A=(f\circ\bar{\pi})\vert_A$. Since $\pi$ is a proper map and the restriction $\func{\pi_W}{V'\setminus\pi^{-1}(W)}{V\setminus W}$ of $\pi$ is a biregular isomorphism, the map $\func{g}{X}{\Gg}$ defined by
$$g(x)=
\begin{cases}
F(\pi_W^{-1}(x))&\text{for }x\in X\setminus(X\cap W)\\
f(x)&\text{for }x\in X\cap W
\end{cases}$$
is a regulous map. Since $F$ is close to $f\circ\bar{\pi}$, the map $g$ is close to $f$. Hence $g$ is homotopic to $f$, and the $\F$-vector bundles $g^*\gamma(\F^n)$ and $f^*\gamma(F^n)=\xi$ are topologically isomorphic. Condition (a) holds since $g^*\gamma(\F^n)$ is actually a regulous $\F$-vector bundle.
\end{proof}

\begin{proof}[Proof of Theorem \ref{real_bundles}]
As it was already noted, Theorem \ref{real_bundles} holds (being equivalent to \cite{KK}*{Theorem 1.7}) if $X$ is an affine real algebraic variety. This fact together with Lemma \ref{6l1} will yield the proof in the general case. Only the 'if' part requires a proof.

Let $\xi$ be a topological $\F$-vector bundle on $X$ such that the $\R$-vector bundle $\xi_\R$ admits a regulous structure. We may assume that $X$ is a constructible closed subset of real projective $n$-space $\mathbb{P}^n(\R)$. Let $V$ be the Zariski closure of $X$ in $\mathbb{P}^n(\R)$ and let $W$, $\pi$, $\bar{\pi}$ be as in Notation \ref{multiblowup}. In particular, $V'$ is an affine real algebraic variety which is compact in the Euclidean topology, and $\bar{\pi}$ is a regulous map. The proof can be completed by induction on $\dim(V)$. The case $\dim(V)=0$ is obvious. Suppose then that $\dim(V)>0$. Since the $\R$-vector bundle $(\xi\vert_{X\cap W})_\R=(\xi_\R)\vert_{X\cap W}$ on $X\cap W$ admits a regulous structure, so does the $\F$-vector bundle $\xi\vert_{X\cap W}$ on $X\cap W$ by the induction hypothesis. Furthermore, the $\R$-vector bundle $(\bar{\pi}^*\xi)_\R=\bar{\pi}^*(\xi_\R)$ on $V'$ admits a regulous structure, hence the $\F$-vector bundle $\bar{\pi}^*\xi$ on $V'$ also admits a regulous structure. Consequently, by Lemma \ref{6l1}, the $\F$-vector bundle $\xi$ admits a regulous structure, as required.
\end{proof}

\begin{remark}
The problem of describing relationships between algebraic and regulous vector bundles on an affine real algebraic variety is also of interest. The following is contained in \cite{KK}*{Example 1.11}.
Let $\mathbb{T}^n=\mathbb{S}^1\times\ldots\times\mathbb{S}^1$ be the $n$-fold product of the unit circle $\mathbb{S}^1$. If $n\geq 4$, then there exists a regulous $\F$-vector bundle on $\mathbb{T}^n$ which is not topologically isomorphic to any algebraic $\F$-vector bundle.

\end{remark}

\begin{subsection}*{Acknowledgements}
Both authors were partially supported by the National Science Centre (Poland) under grant number 2014/15/B/ST1/00046.
\end{subsection}

\begin{bibdiv}
\begin{biblist}
\bib{ATI}{book}{
	author={Atiyah, M.F},	
	title={K-Theory},
    publisher={W.A Benjamin, New York},
    year={1967},
}
\bib{BT}{article}{
	author={Benedetti, R.},	
	author={Tognoli, A.},	
	title={On real algebraic vector bundles},
	journal={Bull. Sci. Math.},
    volume={104},
	number={2},
    year={1980},
    pages={89--112},
}

\bib{BKV}{article}{
	author={Bilski, M.},	
	author={Kucharz, W.},
	author={Valette, A.},		
	author={Valette, G.},
	title={Vector bundles and regulous maps,},
	journal={Math. Z.},
    volume={275},
    year={2013},
    pages={403--418},

}

\bib{BBK}{article}{
	author={Bochnak, J.},	
	author={Buchner, M.},
	author={Kucharz, W.},
	title={Vector bundles over real algebraic varieties},
	journal={K-Theory},
    volume={3},
    year={1989},
    pages={271--298},
    note={Erratum in K-Theory 4 (1990), 113}
}

\bib{BCR}{book}{
	title={Real Algebraic Geometry},
	author={Bochnak, J.},
	author={Coste, M.},
	author={Roy, M.-F.},
	series={Ergeb. der Math. und ihrer Grenzgeb. Folge (3)},
	volume={36},
    publisher={Springer-Verlag, Berlin},
    year={1998},
}

\bib{BK}{article}{
	author={Bochnak, J.},
	author={Kucharz, W.},
	title={Realization of homotopy classes by algebraic mappings},
	journal={J.Reine Angew. Math.},
    volume={377},
    year={1987},
    pages={159--169},
}

\bib{BK2}{article}{
	author={Bochnak, J.},
	author={Kucharz, W.},
	title={Algebraic approximation of mappings into spheres},
	journal={Michigan Math. J.},
    volume={34},
    year={1987},
    pages={119--125},
}

\bib{BK3}{article}{
	author={Bochnak, J.},
	author={Kucharz, W.},
	title={The homotopy groups of some spaces of real algebraic morphisms},
	journal={Bull. London Math. Soc.},
    volume={25},
    year={1993},
    number={4},
    pages={385--392},
}

\bib{FR}{article}{
	author={Fichou, G.},
	author={Huisman, J.},
	author={Mangolte, F.},
	author={Monnier, J.-Ph.},
	title={Fonctions r\'egulues},
	journal={J. Reine Angew. Math.},
    volume={718},
    year={2016},
    pages={103--151},
}
\bib{FMQ}{article}{
	author={Fichou, G.},
	author={Monnier, J.-P.},
	author={Quarez, R.},
	title={Continuous functions in the plane regular after one blowing up},
	journal={Math. Z.},
	volume={285},
	year={2017},
	pages={287--323},
}

\bib{HIR}{article}{
	author={Hironaka, H.},
	title={Resolution of singularities of an algebraic variety over a field of characteristic zero},
	journal={Ann. of Math.},
	volume={79},
	number={2},
	pages={109--326},
	year={1964},
}

\bib{HUI}{article}{
	author={Huisman, J.},
	title={A real algebraic bundle is strongly algebraic whenever its total space is affine},
	journal={Contemp. Math.},
	volume={182},
	publisher={Amer. Math. Soc., Providence, RI},
	pages={117--119},
	year={1995},
}

\bib{HUI2}{article}{
	author={Huisman, J.},
	title={Correction to "A real algebraic bundle is strongly algebraic whenever its total space is affine"},
	journal={Contemp. Math.},
	volume={253},
	publisher={Amer. Math. Soc., Providence, RI},
	pages={179},
	year={2000},
}

\bib{HUS}{book}{
	title={Fibre Bundles},
	author={Husemoller, D.},
	publisher={Springer, New York Heidelberg},
	year={1975},
}

\bib{KOL}{book}{
	  author={Koll{\'a}r, J.},
      title={Lectures on resolution of singularities},
      series={Ann. of Math. Stud.},
      volume={166},
      year={2007},
      publisher={Princeton University Press, Princeton}
}

\bib{KN}{article}{
	  author={Koll{\'a}r, J.},
	  author={Nowak, K.},
      title={Continuous rational functions on real and {$p$}-adic varieties},
      journal={Math. Z.},
      volume={279},
      year={2015},
      number={1-2},
      pages={85--97},
}

\bib{KKK}{article}{
	author={Koll{\'a}r, J.},
	author={Kucharz, W.},
	author={Kurdyka, K.},
	title={Curve-rational functions},
	journal={Math. Ann.},
	doi={10.1007/s00208-016-1513-z},
	year={2017},
}

\bib{KUCH}{article}{
	author={Kucharz, W.},
	title={Rational maps in real algebraic geometry},
	journal={Adv. Geom.},
	volume={9},
	year={2009},
	pages={517--539},
}

\bib{KUCH2}{article}{
	author={Kucharz, W.},
	title={Regular versus continuous rational maps},
	journal={Topology Appl.},
	volume={160},
	year={2013},
	pages={1086--1090},
}

\bib{KUCH3}{article}{
	author={Kucharz, W.},
	title={Approximation by continuous rational maps into spheres},
	journal={J. Eur. Math. Soc.},
	volume={16},
	year={2014},
	pages={1555--1569},
}

\bib{KUCH4}{article}{
	author={Kucharz, W.},
	title={Continuous rational maps into the unit 2-sphere},
	journal={Arch. Math. (Basel)},
	volume={102},
	year={2014},
	pages={257--261},
}

\bib{KUCH5}{article}{
	author={Kucharz, W.},
	title={Some conjectures on stratified-algebraic vector bundles},
	journal={J. Singul.},
	volume={12},
	year={2015},
	pages={92--104},
}

\bib{KUCH6}{article}{
	author={Kucharz, W.},
	title={Continuous rational maps into spheres},
	journal={Math. Z.},
	volume={283},
	year={2016},
	pages={1201--1215},
}

\bib{KUCH7}{article}{
	author={Kucharz, W.},
	title={Stratified-algebraic vector bundles of small rank},
	journal={Arch. Math. (Basel)},
	volume={107},
	year={2016},
	pages={239--249},
}

\bib{KK3}{article}{
	author={Kucharz, W.},
	author={Kurdyka, K.},
    title={Some conjectures on continuous rational maps into spheres},
    journal={Topology Appl.},
    volume={208},
    year={2016},
    pages={17--29},
}

\bib{KK}{article}{
	author={Kucharz, W.},
	author={Kurdyka, K.},
	title={Stratified-algebraic vector bundles},
	journal={J. Reine Angew. Math.},
	year={2016},
	doi={10.1515/crelle-2015-0105},
}

\bib{KK2}{article}{
	author={Kucharz, W.},
	author={Kurdyka, K.},
    title={Comparison of stratified-algebraic and topological K-theory},
    eprint={arXiv:1511.04238 [math.AG]},    
}

\bib{KK4}{article}{
	author={Kucharz, W.},
	author={Kurdyka, K.},
    title={Linear equations on real algebraic surfaces},
    eprint={arXiv:1602.01986 [math.AG]},    
}

\bib{Kur}{article}{
	author={Kurdyka, K.},
	title={Ensembles semi-alg{\'e}briques symm{\'e}triques par arcs},
	journal={Math. Ann},
	volume={282},
	year={1988},
	pages={445--462},
}

\bib{KPA}{article}{
	author={Kurdyka, K.},
	author={Parusiński, A.},
	title={Arc-symmetric sets and arc-analytic mappings},
	book = {
		title = {Arc spaces in real algebraic geometry},
		series = {Panor. Synth{\`e}ses},
		volume = {24},
		publisher = {Soci{\'e}t{\'e} Math{\'e}matique de France},
		address = {Paris},
		year={2007},
	}
	pages={3--67},
}

\bib{MR}{article}{
	author={Marinari, M.G.},
	author={Raimondo, M.},
	title={Fibrati vettoriali su variet{\`a} algebriche definite su corpi non algebricamente chiusi},
	journal={Boll. Un. Mat. Ital. A},
	number={5},
	volume={16},
	year={1979},
	pages={128--136},
}

\bib{MON}{article}{
	author={Monnier, J.-P.},
    title={Semi-algebraic geometry with rational continuous functions},
    eprint={arXiv:1603.04193 [math.AG]},    
}

\bib{N}{article}{
	  author={Nowak, K.J.},
      title={Some results of algebraic geometry over Henselian rank one valued fields},
      journal={Sel. Math. New Ser.},
      year={2017},
      pages={455--495},
      volume={28},
}

\bib{Par}{article}{
	  author={Parusin{\'n}ski, A.},
      title={Topology of injective endomorphisms of real algebraic sets},
      journal={Math. Ann.},
      year={2004},
      volume={328},
      pages={353--372},
}

\bib{Serre}{article}{
	author={Serre, J.-P.},	
	title={Faisceaux alg{\'e}briques coherents},
	journal={Ann. of Math.},
	number={2},
	volume={61},
	year={1955},
	pages={197-278},
}

\bib{SWN}{article}{
	author={Swan, R.G.},	
	title={Vector bundles and projective modules},
	journal={Trans. Amer. Math. Soc.},
	volume={105},
	year={1962},
	pages={201--234},
}

\bib{SWN2}{article}{
	author={Swan, R.G.},	
	title={Topological examples of projective modules},
	journal={Trans. Amer. Math. Soc.},
	volume={230},
	year={1977},
	pages={201--234},
}

\bib{QM}{article}{
	author={Zhang, F.},
    title={Quaternions and matrices of quaternions},
    journal={Linear Algebra Appl.},
    volume={251},
    year={1997},
    pages={21--57},    
}

\bib{Zie}{article}{
	author={Zieli{\'n}ski, M.},
    title={Homotopy properties of some real algebraic maps},
    journal={Homology Homotopy Appl.},
    volume={18},
    year={2016},
    pages={287--294},    
}

\end{biblist}
\end{bibdiv}

\end{document}